\newtheorem*{theorema}{Theorem A}
\newtheorem*{theoremb}{Theorem B}
\newtheorem{cor}{Corollary}
\newtheorem{lemma}{Lemma}[section]
\newtheorem{sublemma}[lemma]{Sublemma}
\newtheorem{prop}[lemma]{Proposition}
\begin{document}
\author{Yong Moo Chung* and Hiroki Takahasi**}
\thanks{
{\it 2010 Mathematics Subject Classification.} 
37D25, 37D35, 37E05, 60F10.}

%\address{Department of Applied Mathematics, Hiroshima University,
%Higashi-Hiroshima 739-8527, JAPAN} \email{chung@amath.hiroshima-u.ac.jp}
%\address{FIRST, Aihara Innovative Mathematical Modelling Project,
%Japan Science and Technology Agency; Institute of Industrial Science, University of Tokyo, Tokyo
%153-8505, JAPAN} \email{h{\_}takahasi@sat.t.u-tokyo.ac.jp}
\title[Multifractal formalism
for Benedicks-Carleson  quadratic maps] {Multifractal formalism for Benedicks-Carleson quadratic maps}
%\date{\today}
\thanks{*Department of Applied Mathematics, Hiroshima University,
Higashi-Hiroshima 739-8527, JAPAN chung@amath.hiroshima-u.ac.jp
**Department of Electronic Science and Engineering,
Kyoto University, Kyoto
606-8501, JAPAN takahasi.hiroki.7r@kyoto-u.ac.jp}
\begin{abstract}
For a positive measure set of nonuniformly expanding
quadratic maps on the interval we effect a multifractal formalism, i.e.,
  decompose the phase space into level sets
of time averages of a given continuous function and consider the
associated {\it Birkhoff spectrum} which encodes this 
decomposition. We derive a formula which relates the Hausdorff dimension of level sets to
entropies and Lyapunov exponents of invariant probability measures, and then use this formula to
show that the spectrum is continuous. In order to estimate the Hausdorff dimension from above,
one has to ``see" sufficiently many points. To this end, 
%we use the sub-exponential slow recurrence condition of Benedicks
%$\&$ Carleson and 
we construct a family of towers. 
Using these towers we 
establish a large
deviation principle of empirical distributions, with Lebesgue as
a reference measure.
\end{abstract}

\maketitle

%\noindent{\small {\bf R\'esume.} Pour des applications stochastiques quadratiques de l'intervalle correspondent
%\'a l'ensemble des param\'etres de mesure positif, nous demontrons le formalisme multifractal, c'est-\`a-dir\'e,
%divisons l'espace de phase dans des niveau de moyenne de temps d'une fonction 
%observable don\'ee, et considerons le {\it spectle Birkhoff} associ\'e \`a cette decomposition.
%Nous esquissons une formulae qui relie la dimension Hausdorff des niveau
%aux entropie et exposantes de Lyapunov des mesures invariantes probabilites,
%et ensuite utilisons cette formulae pour etablir la continuite de la spectle.
%Pour obtenir une estimation superieure de la dimension Hausdorff, on a besoin de ``voir" des comportament de beaucoup de points.
%Dans cet optique nous construions une famille des torres.
%En utilisant ces torres nous etablions le principle grande deviasion pour des distributions empiriques, avec Lebesgue comme une mesure de reference.}
\section{Introduction}

Let $X=[-1,1]$, and let $f_a\colon X\circlearrowleft$ be the
quadratic map given by $f_ax=1-ax^2$, where $0< a\leq2$. It is
well-known \cite{BC85,BC91,Jak81} that there exists a set of
$a$-values near $2$ with positive Lebesgue measure for which the
corresponding $f=f_a$ admits an invariant probability 
measure $\mu$ that is absolutely continuous with respect to Lebesgue. 
In this paper we develop a theory of multifractal formalism for a positive measure
set of these quadratic maps.

Given a function $\varphi\colon X\to \mathbb R$ we consider sets of the form
$$K_\varphi(\alpha)=\left\{x\in X\colon\lim_{n\to\infty}
\frac{1}{n}S_n\varphi(x)=\alpha\right\},\ \ \alpha\in\mathbb R,$$
where $S_n\varphi=\sum_{i=0}^{n-1}\varphi\circ f^i$.
The following characteristic of the sets $K_\varphi(\alpha)$ has been studied in the literature: $$
B_\varphi(\alpha)=\dim_HK_\varphi(\alpha),$$ where $\dim_H$ denotes
the Hausdorff dimension. This function of $\alpha$ is called a {\it Birkhoff spectrum} of $\varphi$. In the case $\varphi=\log |Df|$ it is called a {\it Lyapunov spectrum.} 
Multifractal formalism aims to relate these spectra to other characteristics of the system, and to study the regularity of the spectra as functions of $\alpha$, for instance, continuity, smoothness and convexity. With this study one tries to get more refined descriptions of the dynamics
than purely stochastic considerations.

In the creation of the theory of multifractal formalism, 
uniform hyperbolicity or the absence of critical points
have been assumed to obtain good descriptions of the spectra (see e.g. \cite{Cli10, Ols03,Pes97,PesWei97,PesWei01,Wei99}). 
Our aim here is to incorporate into the theory certain nonuniformly expanding quadratic maps
on the interval with critical points.
We provide a simple set of conditions satisfied on a positive measure set in the parameter space of the quadratic maps, and give a partial description of the Birkhoff spectrum when these conditions are met.

We formulate our conditions as follows:
\begin{itemize}
\item[(A1)] $f=f_a$ where $a$ is sufficiently near $2$;
\item[(A2)] $|Df^n(f0)|\geq e^{\lambda n}$ for every $ n\geq0$, where $\lambda=\frac{9}{10}\log2$;
\item[(A3)] $|f^n0|\geq e^{-\frac{1}{100}\sqrt{n}}$ for every $ n\geq1$;
\item[(A4)] $f$ is topologically mixing on $[f^20,f0]$.
\end{itemize}
Benedicks $\&$ Carleson \cite{BC91} proved the 
the abundance of parameters near $2$ for which (A2) holds.
For these parameters, there exists a unique absolutely continuous invariant probability
measure $\mu$ (acip for short).
The abundance
of parameters for which (A3) holds was proved by Benedicks
$\&$ Young \cite{BY92}, and previously by Benedicks $\&$ Carleson
\cite{BC85} under slightly different hypotheses. 
For their parameters, (A4) holds (see \cite[Lemma 2.1]{You92}).
The parameter
sets they constructed have $2$ as a full Lebesgue density point.
Hence, given $a_0<2$ arbitrarily near $2$, there is a set
$A\subset[a_0,2]$ with positive Lebesgue measure such that (A2)-(A4) 
hold for all
$a\in A$.

Let $C(X)$ denote
the space of continuous functions on $X$, and
$\mathcal M_f$ the space of $f$-invariant
probability measures endowed with the topology of weak convergence.
For $\varphi\in
C(X)$ define
$$c_\varphi=\inf_{x\in X}\varliminf_{n\to\infty}\frac{1}{n}S_n\varphi(x)
\ \text{ and } \ d_\varphi=\sup_{x\in
X}\varlimsup_{n\to\infty}\frac{1}{n}S_n\varphi(x).$$
Since $\mathcal M_f$ is compact and $\varphi$ is continuous, one has
$c_\varphi=\min\{\nu(\varphi)\colon\nu\in\mathcal M_f\}$ and
$d_\varphi=\max\{\nu(\varphi)\colon\nu\in\mathcal M_f\}$,
where $\nu(\varphi)=\int\varphi d\nu$.
Define sets
$K_\varphi(\alpha)$ as above,
and consider the decomposition $$X
=\left(\bigcup_{\alpha\in[c_\varphi,d_\varphi]}
K_\varphi(\alpha)\right)\cup \hat K_\varphi.$$ Here, $\hat K_\varphi$ is the set of points in $X$
for which $(1/n)S_n\varphi$ does not converge. 
This decomposition has extremely complicated topological structures.
Indeed, by (A4),
$K_\varphi(\alpha)$ and $\hat K_\varphi$ are dense in $X$
unless they are empty. 
If $c_\varphi<d_\varphi$, then $\hat K_\varphi$
is nonempty and carries the full Hausdorff dimension \cite{BarSch00,Chu10}.
Since $c_\varphi$ and $d_\varphi$ are attained by ergodic measures, both
$K_\varphi(c_\varphi)$ and $K_\varphi(d_\varphi)$ are nonempty.
Using (A4) one can construct points with time averages converging to any number $\alpha\in(c_\varphi,d_\varphi)$. Hence any $K_\varphi(\alpha)$ in the decomposition is nonempty.

Let $h(\nu)$ denote the entropy of $\nu\in\mathcal M_f$
and define 
$\lambda(\nu)=\int\log |Df|d\nu$ which we call the Lyapunov exponent of $\nu$. 
This value is well-defined \cite{BruKel98}, and 
by a result of \cite{NowSan98},
$$\lambda_{\rm inf}=\inf\{\lambda(\mu)\colon \mu\in\mathcal M_f\}>0.$$
Relationships between entropies, Lyapunov exponents, and dimensions of invariant probability 
measures were
studied in the literature \cite{Hof95,Ledr81,You82}.
The next theorem relates the Birkhoff spectrum to entropies and 
Lyapunov exponents of
invariant probability measures.

\begin{theorema}
If $f=f_a$ satisfies {\rm (A1)}-{\rm (A4)}, then for any
$\varphi\in C(X)$ and $\alpha\in[c_\varphi,d_\varphi]$,
$$B_\varphi(\alpha)=\lim_{\varepsilon\to0}{\sup}
 \left\{\frac{h(\nu)}{\lambda(\nu)}\colon \nu\in\mathcal M_f,
\ \left|\nu(\varphi)-\alpha\right|<\varepsilon\right\}.$$
In addition, 
the Birkhoff spectrum $\alpha\mapsto B_\varphi(\alpha)$ is continuous. 
\end{theorema}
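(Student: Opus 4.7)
The plan is to prove the variational formula by establishing matching upper and lower bounds for $B_\varphi(\alpha)$, and then to deduce continuity as a consequence. Write $D(\alpha)$ for the right-hand side of the claimed identity. I would prove $B_\varphi(\alpha)\ge D(\alpha)$ and $B_\varphi(\alpha)\le D(\alpha)$ in turn, and finish by showing that $D$ is continuous on $[c_\varphi,d_\varphi]$.

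For the lower bound I would use generic points of ergodic measures. For any ergodic $\nu\in\mathcal M_f$, Birkhoff's theorem places the set of $\nu$-generic points inside $K_\varphi(\nu(\varphi))$; since $\lambda(\nu)\ge\lambda_{\rm inf}>0$, a one-dimensional Ledrappier-Young type pointwise-dimension formula gives that this generic set has Hausdorff dimension at least $h(\nu)/\lambda(\nu)$. To convert this into a bound at a prescribed $\alpha$, I would interpolate: given $\nu$ with $|\nu(\varphi)-\alpha|$ small, pair it with a fixed auxiliary measure $\mu_0$ whose expectation lies on the opposite side of $\alpha$, and form the convex combination $\tilde\nu=t\nu+(1-t)\mu_0$ with $t\in(0,1)$ chosen so that $\tilde\nu(\varphi)=\alpha$. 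When $|\nu(\varphi)-\alpha|$ is small, $t$ is close to $1$, and affinity of entropy and Lyapunov exponent under convex combinations makes $h(\tilde\nu)/\lambda(\tilde\nu)$ arbitrarily close to $h(\nu)/\lambda(\nu)$. Passing to ergodic components of $\tilde\nu$ and letting $\varepsilon\to 0$ then gives $B_\varphi(\alpha)\ge D(\alpha)$.

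The upper bound is the technical heart, and where I expect the main obstacle. Any $x\in K_\varphi(\alpha)$ generates empirical measures whose weak-$*$ accumulation points are $\nu\in\mathcal M_f$ with $\nu(\varphi)=\alpha$, and one would like to cover $K_\varphi(\alpha)$ by dynamically defined cylinders of generation $n$ with diameter roughly $\exp(-\lambda(\nu)n)$ and total count roughly $\exp(h(\nu)n)$; a Carath\'eodory construction then delivers the dimension bound $h(\nu)/\lambda(\nu)$. The difficulty in the present setting is the critical point of $f$: there is no global Markov or uniformly hyperbolic structure, which forces the use of the tower construction announced in the abstract. I would build an inducing scheme that excludes orbits approaching $0$ too closely, verify bounded distortion along inducing times, and check that the towers cover a set of full Lebesgue measure and resolve $K_\varphi(\alpha)$ at arbitrarily fine scales. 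With the towers in hand, a coarse-graining of the empirical distributions on length-$n$ cylinders together with a standard entropy/Lyapunov counting delivers $B_\varphi(\alpha)\le D(\alpha)$.

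For continuity of $B_\varphi$ it suffices, by the identity, to prove that $D$ is continuous. Upper semicontinuity is automatic from the definition of $D$ as a decreasing limit of suprema over shrinking neighborhoods. Lower semicontinuity I would obtain with the same convex-combination device used in the lower bound: given $\alpha_k\to\alpha$ and any $\nu$ nearly witnessing the value $D(\alpha)$, a convex combination with a fixed $\mu_0$ produces invariant measures $\nu_k$ with $\nu_k(\varphi)=\alpha_k$ exactly and $h(\nu_k)/\lambda(\nu_k)$ arbitrarily close to $h(\nu)/\lambda(\nu)$, so that $D(\alpha_k)\ge h(\nu_k)/\lambda(\nu_k)$ is eventually near $D(\alpha)$, completing the argument.
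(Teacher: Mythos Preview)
Your lower-bound argument has a genuine gap. You correctly observe that for an ergodic $\nu$ the generic set lies in $K_\varphi(\nu(\varphi))$ and has dimension $\ge h(\nu)/\lambda(\nu)$, but this only yields $B_\varphi(\nu(\varphi))\ge h(\nu)/\lambda(\nu)$, not $B_\varphi(\alpha)\ge h(\nu)/\lambda(\nu)$. The interpolation step does not repair this: the convex combination $\tilde\nu=t\nu+(1-t)\mu_0$ has $\tilde\nu(\varphi)=\alpha$, but its ergodic components are simply $\nu$ and $\mu_0$, whose $\varphi$-averages are \emph{not} $\alpha$. There is no ergodic measure produced by this device whose generic points land in $K_\varphi(\alpha)$. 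In effect you are proving that $\liminf_{\beta\to\alpha}B_\varphi(\beta)\ge D(\alpha)$, which is the lower semicontinuity statement, not the pointwise lower bound $B_\varphi(\alpha)\ge D(\alpha)$; using the latter to deduce the former would be circular.

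What the paper does instead is build, by hand, a Cantor set $\Gamma\subset K_\varphi(\alpha)$ of the required dimension (Proposition~\ref{lowd}). Given a sequence $\{\mu_i\}$ of ergodic measures with $\mu_i(\varphi)\to\alpha$ and $h(\mu_i)/\lambda(\mu_i)\to D(\alpha)$, one invokes Katok's horseshoe theorem to approximate each $\mu_i$ by a horseshoe $\Omega_i$, then constructs orbits that spend rapidly growing blocks of time $\kappa_i$ near horseshoe $\Omega_i$ before transitioning (via topological mixing) to $\Omega_{i+1}$. The time averages along such orbits converge to $\alpha$ exactly, and a mass-distribution argument gives the dimension lower bound. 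This horseshoe-switching construction is the missing ingredient; without it there is no mechanism to manufacture points whose Birkhoff averages hit a \emph{prescribed} value that is not already realized by an ergodic measure. Your upper-bound sketch and continuity argument are otherwise in line with the paper's approach.
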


%The monotonicity of the spectrum as in Theorem A
%is an immediate consequence of 
 %$B_\varphi(\mu_{\rm ac}(\varphi))=1$ and the next
%\begin{lemma}\label{monotone}
%For all $\alpha_0,\alpha_1\in[c_\varphi,d_\varphi]$ with $\alpha_0<\alpha_1$ and $0\leq t\leq 1$ we have
%$$B_\varphi(t\alpha_0+(1-t)\alpha_1)\geq\min\left(B_\varphi(\alpha_0), B_\varphi(\alpha_1)\right).$$
%\end{lemma}
 %\begin{proof}
%From the formula in Theorem A the following holds for $i=1,2$.
%For each $n>0$ there exists $\varepsilon_{i,n}>0$ such that:
%$\varepsilon_{i,n}\to0$ as $n\to\infty$; there exists $\mu_{i,n}\in\mathcal M_f$ such that
%$B_\varphi(\alpha_i)-1/n\leq h(\mu_{i,n})/\lambda(\mu_{i,n})$
%and $|\mu_{i,n}(\varphi)-\alpha_i|\leq \varepsilon_{i,n}$.
%We have
%$$\min\left(B_\varphi(\alpha_0), B_\varphi(\alpha_1)\right)
%\leq\frac{1}{n}+\min\left(\frac{h(\mu_{0,n})}{\lambda(\mu_{0,n})},
%\frac{h(\mu_{1,n})}{\lambda(\mu_{1,n})}\right).$$
%The minimum of the right-hand-side is
%$\leq
%h(\nu_n)/\lambda(\nu_n)$, where $\nu_n=t\mu_{0,n}+(1-t)\mu_{1,n}$. 
%Then
%\begin{align*}\min\left(\frac{h(\mu_{0,n})}{\lambda(\mu_{0,n})},
%\frac{h(\mu_{1,n})}{\lambda(\mu_{1,n})}\right)\leq{\sup}
 %\left\{\frac{h(\mu)}{\lambda(\mu)}\colon \mu\in\mathcal M_f,
%\ \left|\mu(\varphi)-(t\alpha_0+(1-t)\alpha_1)\right|<t\varepsilon_{0,n}+(1-t)\varepsilon_{1,n}\right\}.\end{align*}
%Combining these two inequalities and then letting $n\to\infty$ we get the desired inequality.
%\end{proof}

In \cite{Chu10}, the first-named author derived the same formula as in Theorem A
for a class of one-dimensional maps. This class includes maps
 whose critical points are non-recurrent and with no neutral or stable 
periodic point (the so-called Misiurewicz maps). 
 Theorem A allows the recurrence of the critical point at a sub-exponential rate
by condition (A3).
Although $\varphi$ is required to be continuous, an extension of the formula
to cover the Lyapunov spectrum will be given in our forthcoming work.

The multifractal formalism for one-dimensional maps with critical points
is a rapidly expanding area of research, and quite a few results have
been obtained lately.
For multimodal maps satisfying growth conditions of derivatives along 
the orbits of critical points,  
Iommi $\&$ Todd \cite{IomTod11} obtained a formula which relates the Lyapunov spectra 
to thermodynamic pressures.
See
Gelfert, Przytycki $\&$ Rams \cite{GelPrzRam10} and 
Przytycki $\&$ Rivera-Letelier  \cite{PrzRiv11}
for results on the Lyapunov spectra of rational maps on the Riemannian sphere.
A key idea common to these recent works
is to construct a sequence of nice induced systems that ``exhausts" the original system. 
Although a proof of Theorem A relies on the same idea, 
 our induced systems are 
 equipped with  a special recurrence property. This requires a new construction.

The formula in Theorem A yields several properties of the Birkhoff spectrum.
For instance, it is easy to show that $B_\varphi$ is monotone increasing
on the interval $[c_\varphi, \mu(\varphi)]$, while it is
monotone decreasing on 
$[ \mu(\varphi),d_\varphi]$ as a function of $\alpha$.
From the formula it readily follows that
$ B_\varphi$ is upper semi-continuous. 
We are able to show that $B_\varphi$ is lower semi-continuous, and so it is continuous.
This phenomenon illustrates what is sometimes called the {\it multifractal miracle} - even though the decomposition of the phase space into the level sets is intricate and extremely complicated, the function $B_\varphi$ which encodes
this decomposition is continuous.

It is an interesting problem to study better regularities of 
the spectrum. If the dynamics is uniformly hyperbolic and the function $\varphi$ 
is H\"older continuous, then the spectrum is real analytic and concave \cite{PesWei01}. 
For one-dimensional maps with parabolic fixed points, 
the non-analyticity of the Lyapunov spectra implies the finiteness of absolutely continuous invariant measures \cite{Nak00}. 
For the quadratic maps, only numerical results are known (see e.g. 
\cite{GraBadPol87,HJKPS86}).
%As for the convexity, it is known that the Lyapunov spectrum is not always concave \cite{IomKiw09}.

Our strategy for the lower estimate of $B_\varphi(\alpha)$ is to construct certain Cantor
sets in $K_\varphi(\alpha)$, and then put probability measures on them for which the {\it Mass Distribution Principle} holds (see \cite[Proposition 2.1]{You82}). %[\cite{Fal03} pp.60]).
The presence of the critical point does not matter 
because small derivatives tend to improve lower estimates of Hausdorff dimension.

For the upper estimate, we  
approximate $B_\varphi(\alpha)$ from above by the dimensions of ergodic measures
(cf. Proposition \ref{upperd}).
To construct such an ergodic sequence we construct
a family of uniformly hyperbolic induced systems with finitely many branches. We pick the corresponding family of equilibrium states for weighted geometric 
potentials, and then spread them out to produce a sequence of ergodic measures with the desired property.

The biggest difficulty is to construct such a family of induced systems.
We do this in two steps. We first construct a family of towers, with a special property that
a {\it  positive definite fraction of points in each
partition element quickly fall down to the
ground floor}. 
For this construction 
we make an important use of condition (A3).
We then construct the desired family of induced systems by choosing
a subsystem from each tower dynamical system.

%In very special cases such as the case of Misiurewicz maps, l
%Towers (or inducing schemes) are usual tools for the study of dynamical systems lacking the uniform hyperbolicity. 
%There is a much freedom and flexibility in the construction of towers, and so the issue is to construct a ``nice tower" which captures a relevant information of the underlying system. 
%In \cite{Chu10}, a formula similar to the one in Theorem A was obtained for one-dimensional maps admitting ``ideal towers". However, it is hard to construct such towers apart from very special cases (e.g. the case of Misiurewicz maps).
%When it is hard to get a necessary information just by considering a single tower, a natural solution is to construct a family of towers and use them altogether. We refer the reader to e.g. \cite{BruTod09,ChuTak,IomTod11,PrzRiv11} in which this type of solution was taken, for different purposes with different assumptions on the systems.

 Using the family of towers used in the proof of Theorem A we establish a large 
deviation principle for the Lebesgue measure.
Let $\mathcal M$ denote
 the space of probability measures on $X$ endowed with the topology of weak convergence.
Define a free energy
function $F\colon \mathcal M\to\mathbb R\cup\{-\infty\}$ by
\[F(\nu)=\begin{cases} &h(\nu)-\lambda(\nu)\ \
\text{if}\ \ \nu\in\mathcal M_f;  \\
&-\infty \ \ \ \ \ \ \ \ \ \ \ \ \text{otherwise.}\end{cases}\] By
Ruelle's inequality \cite{Rue78}, $F(\nu)\leq0$ and the equality
holds only if $\nu=\mu$  \cite{Led81}.

It is known \cite{BruKel98} that the Lyapunov exponent is not lower semi-continuous,
and so $-F$ may not be lower semi-continuous.
Hence we introduce its lower-semi-continuous regularization $I\colon\mathcal M\to[0,\infty]$
by $$I(\nu)= - \inf_{\mathcal G} \sup \{ F(\xi)\colon \xi\in \mathcal G\},$$
where the infimum is taken over all neighborhoods $\mathcal
G$ of $\nu$ in $\mathcal M$.
Denote by $|$ $\cdot$ $|$ the Lebesgue measure on $X$ and let
$\log0=-\infty$.
Let $\delta_x^n=(1/n)\sum_{i=0}^{n-1}\delta_{f^ix}$
where  $\delta_{f^ix}$ is the
Dirac measure at $f^ix$.

\begin{theoremb}
Let $f=f_a$ satisfy {\rm (A1)}-{\rm (A4)}. Then the large
deviation principle holds for $(f, | \cdot |)$ with $I$ the rate function, namely
for any open set $\mathcal G\subset\mathcal
M$, 
\begin{equation*}\label{low}
\varliminf_{n\to\infty}\frac{1}{n} \log |\{x\in X\colon
\delta_x^n\in \mathcal G\}|\geq-\inf\{I(\nu)\colon {\mu\in \mathcal
G}\},\end{equation*}
and  for any closed set $\mathcal K\subset\mathcal
M$,
\begin{equation*}\label{up}
\varlimsup_{n\to\infty}\frac{1}{n} \log |\{x\in X\colon
\delta_x^n\in \mathcal K \}|\leq-\inf\{I(\nu)\colon\mu\in \mathcal
K\}.\end{equation*}
\end{theoremb}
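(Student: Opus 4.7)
The plan is to leverage the family of towers constructed for Theorem A. These towers give a Markov structure on a full-Lebesgue-measure subset of $X$ with the crucial property that a positive definite fraction of points in each partition element falls back to the ground floor quickly. This yields both the sharp distortion estimates and the tail control needed for the large deviation bounds. I would treat the upper and lower bounds separately, reducing each to an estimate involving the free energy $F$ on invariant measures; the bridge between Lebesgue and invariant measures is provided by the tower.

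For the upper bound, points whose orbit spends an atypically small fraction of time on the tower base form a set of exponentially small Lebesgue measure, by the tail estimate associated to the rapid-return property; these points can be absorbed into the error. For the remaining points, I would fix $\nu\in\mathcal{M}$, choose a small neighborhood $\mathcal{G}$ of $\nu$ so that by definition of $I$ one has $\sup\{F(\xi)\colon \xi\in\mathcal{G}\cap\mathcal{M}_f\}\leq -I(\nu)+\delta$, and then estimate the Lebesgue measure of $\{x\colon \delta_x^n\in\mathcal{G}\}$ using the tower's Markov cylinders. A Katok--Brin--Katok type entropy argument bounds the number of such cylinders intersecting this set by $e^{n(h(\xi)+\delta)}$ for some $\xi\in\mathcal{G}\cap\mathcal{M}_f$, while bounded distortion on the tower bounds the Lebesgue measure of each cylinder by $e^{-n(\lambda(\xi)-\delta)}$. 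Multiplying yields $e^{n(F(\xi)+2\delta)}\leq e^{-n(I(\nu)-3\delta)}$, and the compactness of $\mathcal{K}$ together with a finite covering argument completes the bound.

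For the lower bound, since $\mathcal{G}$ is open and $-I$ is the upper semi-continuous envelope of $F$, one has $\sup_{\nu\in\mathcal{G}}(-I(\nu))=\sup\{F(\xi)\colon\xi\in\mathcal{G}\cap\mathcal{M}_f\}$, so it suffices to show for each $\xi\in\mathcal{G}\cap\mathcal{M}_f$ that
\[
\varliminf_{n\to\infty}\frac{1}{n}\log|\{x\in X\colon \delta_x^n\in\mathcal{G}\}|\geq F(\xi)-\delta.
\]
I would first approximate $\xi$ in the weak topology by ergodic measures $\xi'\in\mathcal{G}$ with positive Lyapunov exponent that are captured by one of the towers. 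Katok's entropy formula then supplies, for small $\varepsilon>0$, an $(n,\varepsilon)$-separated set $E_n$ of cardinality at least $e^{n(h(\xi')-\delta)}$ all of whose points have $\delta_x^n$ close to $\xi'$; bounded distortion on the tower guarantees that each Bowen ball $B_n(x,\varepsilon)$ contains an interval of length $\gtrsim e^{-n(\lambda(\xi')+\delta)}$. Since these intervals are pairwise disjoint and contained in $\{x\colon \delta_x^n\in\mathcal{G}\}$, summing gives the required lower bound $e^{n(F(\xi')-3\delta)}$.

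The main obstacle will be the approximation step in the lower bound: a general $\xi\in\mathcal{M}_f$ may put mass near the critical orbit or have an arbitrarily small Lyapunov exponent, so neither Katok's theorem nor the Bowen-ball distortion estimate is directly applicable. Overcoming this requires exhausting $\mathcal{M}_f$ by ergodic measures carried by compact uniformly hyperbolic subsystems of the towers, which is precisely the content of the tower construction used in Theorem A, together with the upper semi-continuity argument from the definition of $I$. Once that exhaustion is available, the rest of the argument proceeds by the essentially standard combination of entropy, distortion, and covering techniques sketched above.
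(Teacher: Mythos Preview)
Your upper-bound sketch has a genuine gap. You write that ``a Katok--Brin--Katok type entropy argument bounds the number of such cylinders intersecting this set by $e^{n(h(\xi)+\delta)}$ for some $\xi\in\mathcal{G}\cap\mathcal{M}_f$,'' and then that bounded distortion bounds each cylinder length by $e^{-n(\lambda(\xi)-\delta)}$ for the \emph{same} $\xi$. But no invariant measure $\xi$ is given a priori: the Brin--Katok theorem runs in the opposite direction (from a fixed measure to cylinder counts), and it does not manufacture an invariant measure from a Lebesgue-defined set of points. Producing a single $\eta\in\mathcal{M}_f$ that simultaneously controls the cylinder count (entropy) and the cylinder lengths (Lyapunov exponent), and in addition lies in the correct half-spaces $\{\eta(\varphi_j)\geq\alpha_j-\sqrt{\varepsilon}\}$, is precisely the nontrivial content of the upper bound.

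The paper does not attempt to separate ``number of cylinders'' from ``size of cylinders.'' Instead it works directly with the sum $\sum_{A\in\mathcal{B}_n}|A|$ over the relevant tower cylinders, and uses the rapid-return property (Lemma~\ref{towerreturn}) to extract from $\mathcal{B}_n$ a finite collection $\mathcal{L}$ generating a genuine horseshoe for some $f^r$, $r\approx n$, with $\sum_{L\in\mathcal{L}}|L|$ comparable to $\sum_{A}|A|$ up to a subexponential factor (Lemma~\ref{horse}). On this uniformly hyperbolic horseshoe one can run a variational-principle argument (as in Sect.~\ref{construct2}) to build an $f^r$-invariant measure $\nu_\infty$ with $h_{f^r}(\nu_\infty)-\int\log|Df^r|\,d\nu_\infty\geq\log\sum_{L}|L|-O(1)$; spreading $\nu_\infty$ gives the witness $\eta\in\mathcal{M}_f$ with $F(\eta)$ bounding $\frac{1}{n}\log|\mathcal{B}_n|$ and with the correct integrals (Proposition~\ref{upper}). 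This horseshoe-plus-variational construction is the missing ingredient in your sketch.

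For the lower bound the paper simply invokes the result of \cite{Chu11}; your proposed Katok-horseshoe argument for that direction is reasonable and close in spirit to what is done there, so the issue is confined to the upper bound.
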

The large deviation principle has been proved in different settings, for
different reference measures and with different assumptions on the hyperbolicity
of the systems \cite{ComRiv11,K,OP,PrzRiv11}.
For a positive measure set of quadratic maps we treat here, 
the large deviation principle for the acips was proved in \cite{ChuTak}. 
Theorem B is not a consequence of this, 
because the density of the acip is unbounded.

The Contraction Principle in large deviations \cite{DZ} allows us to obtain a formula for fluctuations
of time averages of continuous functions. 
Let $\varphi\in
C(X)$. We assume $c_\varphi<d_\varphi$, for otherwise it is meaningless
to consider $\varphi$. Define a function $F_\varphi\colon [c_\varphi,d_\varphi]\to\mathbb R$ by
$$F_\varphi(\alpha)=\sup\left\{F(\nu)\colon \nu\in\mathcal M_f,\ \nu(\varphi)
=\alpha\right\}.$$

\begin{cor}\label{contract}
If $c_\varphi\leq\alpha<\beta\leq d_\varphi$, then
\[\lim_{n\to\infty}\frac{1}{n}\log \left|\left\{
\alpha\leq \frac{1}{n}S_n\varphi\leq \beta\right\}\right|=\max_{\alpha\leq t\leq \beta
}F_\varphi(t).\]
\end{cor}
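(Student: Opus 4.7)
The plan is to reduce the corollary to Theorem B by pulling back through the continuous evaluation map $\Phi\colon\mathcal M\to\mathbb R$, $\Phi(\nu)=\nu(\varphi)$, and then identifying the resulting variational expression with $\max_{[\alpha,\beta]}F_\varphi$. Set $\mathcal K=\Phi^{-1}([\alpha,\beta])$ (closed) and $\mathcal G=\Phi^{-1}((\alpha,\beta))$ (open). Since $\delta_x^n(\varphi)=\tfrac1nS_n\varphi(x)$, the set appearing in the corollary equals $\{x\colon\delta_x^n\in\mathcal K\}$ and contains $\{x\colon\delta_x^n\in\mathcal G\}$, so the two halves of Theorem B sandwich
\[
-\inf_{\mathcal G}I\;\le\;\varliminf_n\tfrac1n\log\big|\{\cdots\}\big|\;\le\;\varlimsup_n\tfrac1n\log\big|\{\cdots\}\big|\;\le\;-\inf_{\mathcal K}I.
\]
The corollary therefore reduces to the identity $-\inf_{\mathcal K}I=-\inf_{\mathcal G}I=\max_{[\alpha,\beta]}F_\varphi$.

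Unravelling the definition, $-I$ is the upper-semi-continuous regularization of $F$ on $\mathcal M$, so $-I\ge F$; since $F\equiv-\infty$ off $\mathcal M_f$, one has $\sup_{\mathcal K}F=\max_{[\alpha,\beta]}F_\varphi$. Combined with $\mathcal G\subset\mathcal K$, the chain
\[
\sup_{\mathcal G}F\;\le\;\sup_{\mathcal G}(-I)\;\le\;\sup_{\mathcal K}(-I)
\]
reduces the task to the single nontrivial inequality $\sup_{\mathcal K}(-I)\le\sup_{\mathcal G}F$, i.e., that the u.s.c. regularization does not genuinely exceed $F$ at boundary points of $\mathcal K$.

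For this I would employ a convex-combination perturbation. Since $\alpha<\beta$ and $\{\nu(\varphi)\colon\nu\in\mathcal M_f\}=[c_\varphi,d_\varphi]\supseteq[\alpha,\beta]$, fix an auxiliary $\eta\in\mathcal M_f$ with $\eta(\varphi)\in(\alpha,\beta)$. For an arbitrary $\nu\in\mathcal K$ and a sequence $\xi_n\to\nu$ in $\mathcal M_f$ realizing $F(\xi_n)\to-I(\nu)$ (the very definition of the u.s.c. regularization), set $\xi_n'=(1-t_n)\xi_n+t_n\eta$ with $t_n\downarrow 0$ chosen just large enough that $\xi_n'(\varphi)\in(\alpha,\beta)$; feasibility follows from $\xi_n(\varphi)\to\nu(\varphi)\in[\alpha,\beta]$ and $\eta(\varphi)$ lying strictly inside. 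Affinity of $h$ and $\lambda$ on $\mathcal M_f$ then yields $F(\xi_n')=(1-t_n)F(\xi_n)+t_n F(\eta)$, and boundedness of $F$ on $\mathcal M_f$ forces $F(\xi_n')\to-I(\nu)$. Since $\xi_n'\in\mathcal G$, taking the supremum over $\nu\in\mathcal K$ gives $\sup_{\mathcal G}F\ge\sup_{\mathcal K}(-I)$, closing the loop.

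The hard step is this boundary perturbation: a measure $\nu\in\mathcal K$ with $\nu(\varphi)\in\{\alpha,\beta\}$ may realize $-I(\nu)$ through approximants lying outside $\mathcal K$, so a crude application of the LDP bounds is inadequate. The availability of an interior reference measure $\eta$ and the affinity of $h-\lambda$ are precisely what make the sliding into $\mathcal G$ lossless to first order in $t_n$.
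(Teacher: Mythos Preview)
Your argument is correct and is precisely a hands-on implementation of what the paper invokes in one line, namely the Contraction Principle from \cite{DZ} applied to the continuous map $\Phi(\nu)=\nu(\varphi)$; the paper gives no further details. Your convex-combination perturbation using an interior measure $\eta$ and the affinity of $F=h-\lambda$ on $\mathcal M_f$ is exactly what is needed to bridge $\sup_{\mathcal K}(-I)$ and $\sup_{\mathcal G}F$, a step the paper leaves implicit; an equivalent shortcut would be to note that $F_\varphi$ is concave on $[c_\varphi,d_\varphi]$ (again by affinity of $F$), hence $\max_{[\alpha,\beta]}F_\varphi=\sup_{(\alpha,\beta)}F_\varphi$ whenever $\alpha<\beta$, but your version is just as clean.
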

Keller $\&$ Nowicki \cite{KelNow92} obtained a {\it local} result
which claims the existence of the limit provided $\varphi$ is H\"older continuous
and $\alpha$, $\beta$ are sufficiently near the mean $\mu(\varphi)$.
Corollary 1 is a {\it full} result with no restriction 
on $\alpha$ or $\beta$.

The next corollary follows from Varadhan's integral lemma
\cite[p.137]{DZ} and the convex duality of Fenchel-Legendre
transforms \cite[p.152]{DZ}.
\begin{cor}
For any $\varphi\in C(X)$, the limit
\[P(\varphi)=\lim_{n\to\infty}\frac{1}{n}\log \int e^{S_n\varphi}dx\]
exists. In addition, $(P,I)$ form a Legendre pair, namely the following holds:
$$P(\varphi)=\max\left\{
\nu(\varphi)-I(\nu)\colon\nu\in\mathcal M_f\right\}\ \ \text{for
all} \ \varphi\in C(X);$$ 
$$I(\nu)=\max\left\{\nu(\varphi)-P(\varphi)\colon\varphi\in C(X)\right\}\ \
\text{for all}\ \ \nu\in\mathcal M_f.$$
\end{cor}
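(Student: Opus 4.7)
The plan is to deduce the corollary directly from Theorem B by combining Varadhan's integral lemma with the Fenchel-Moreau theorem from convex analysis, along the lines suggested in the text.

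For the existence of $P(\varphi)$ and the first Legendre identity, I would first note that $S_n\varphi(x)=n\cdot\nu(\varphi)\big|_{\nu=\delta_x^n}$, so the integrand depends on $x$ only through the empirical distribution $\delta_x^n$. Normalizing Lebesgue to a probability measure on $X$ and pushing it forward via $x\mapsto\delta_x^n$ yields a sequence of probability measures $Q_n$ on the compact metrizable space $\mathcal M$; Theorem B transfers verbatim to an LDP for $\{Q_n\}$ with rate function $I$. Since $\nu\mapsto\nu(\varphi)$ is bounded and continuous on $\mathcal M$, Varadhan's integral lemma \cite[Theorem~4.3.1]{DZ} applies and gives
$$\lim_{n\to\infty}\frac{1}{n}\log\int e^{S_n\varphi}dx=\sup_{\nu\in\mathcal M}\{\nu(\varphi)-I(\nu)\},$$
the factor $(\log|X|)/n$ coming from the normalization vanishing in the limit. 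Since $I\equiv+\infty$ off $\mathcal M_f$, the supremum is effectively over $\mathcal M_f$, and it is attained because $\mathcal M$ is compact and $\nu\mapsto\nu(\varphi)-I(\nu)$ is upper semi-continuous.

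For the reverse identity, I would verify that $I\colon\mathcal M\to[0,\infty]$ is proper, convex and lower semi-continuous on the ambient locally convex space of finite signed measures, and then invoke Fenchel-Moreau. Properness is immediate since $I(\mu)=0$, and lower semi-continuity is built into the definition of $I$. For convexity, recall that on $\mathcal M_f$ entropy is affine under convex combinations and $\nu\mapsto\lambda(\nu)$ is linear, so $-F=\lambda-h$ is affine on $\mathcal M_f$, and remains convex when extended by $+\infty$ to $\mathcal M\setminus\mathcal M_f$; the lsc regularization of a convex function on a locally convex space is again convex, so $I$ is convex. The first identity just established says that $P$ is the Legendre-Fenchel conjugate of $I$, and hence Fenchel-Moreau yields
$$I(\nu)=\sup_{\varphi\in C(X)}\{\nu(\varphi)-P(\varphi)\}\quad\text{for every }\nu\in\mathcal M_f,$$
with the supremum realized as a maximum on the effective domain $\{I<\infty\}$ via a standard subgradient/Hahn-Banach argument separating the epigraph of $I$ from a point below it.

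The main obstacle is the convexity of $I$. The affinity of entropy and linearity of the Lyapunov exponent make $-F$ itself affine on $\mathcal M_f$, but one must check that the subsequent lsc regularization preserves convexity on the full simplex $\mathcal M$; this goes through because $\mathcal M$ is a convex subset of a locally convex space, on which the lsc envelope of any convex function is convex. Once convexity, lower semi-continuity and properness of $I$ are verified, both Legendre identities follow mechanically, and the attainment of both maxima is a consequence of compactness of $\mathcal M$ together with elementary convex-analytic facts.
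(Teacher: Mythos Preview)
Your approach is essentially the same as the paper's, which gives no detailed argument at all and simply cites Varadhan's integral lemma and the convex duality of Fenchel--Legendre transforms in \cite{DZ}; your proposal is a correct fleshing-out of that citation, including the verification that $I$ is convex (via the affinity of $-F$ on the convex closed set $\mathcal M_f$ and preservation of convexity under lower semi-continuous regularization). One caveat: your justification that the second supremum is actually a \emph{maximum} (``standard subgradient/Hahn--Banach argument'') is somewhat optimistic in infinite dimensions---nonemptiness of $\partial I(\nu)$ is not automatic outside the relative interior of the domain---but the paper does not address this either, so you are at parity with the source.
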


The rest of this paper consists of four sections. In Sect.2 we
construct a family of towers, collecting 
materials in \cite{ChuTak} as far as needed. 
In Sect.3, using this family of towers we estimate  
$B_\varphi(\alpha)$ from above. In Sect.4 we estimate  
$B_\varphi(\alpha)$ from below and complete the 
proof of the formula in Theorem A. 
We then use this formula to prove the continuity of the Birkhoff spectrum.
In Sect.5 we prove Theorem B.

\section{Construction of a family of towers}
In this section, for a map $f$ satisfying (A1)-(A4) 
we first introduce the machinery in \cite{ChuTak} for recovering small derivatives near 
the critical point. We then 
construct a family of induced maps and associated towers. 
Important constants are
$0<\varepsilon\ll1$ and $N\gg1$, chosen in this order. 
In this section we suppose they are given.
In Sect.3 and Sect.5 we let $\varepsilon$ approach $0$.

We use the following standard notation:
for a set $A\subset X$,
 $d(0,A)=\inf\{|x|\colon x\in
A\}$; 
given a partition $\mathcal P$ of $A\subset X$
and $B\subset A$,
$\mathcal P|B=\{\omega\cap B\colon \omega\in\mathcal P\}$.
%We use both $m($ $\cdot$ $)$ and 

\subsection{Recovering expansion}\label{recovery}

 The next lemma
states that the dynamics outside of a small neighborhood of 
the critical point is
uniformly expanding with an exponent independent
of the size of the neighborhood. 
%It will be used to construct induced maps with arbitrarily small scale.

\begin{lemma}\label{exp2}{\rm (cf. \cite[ Lemma 2.5]{ChuTak})}
The following holds for any $\hat\delta>0$:
if $x\in X$, $n\geq1$ are such that $|f^ix|\geq\hat\delta$ for
every $0\leq i\leq n-1$, then $|Df^{n}(x)|\geq \hat\delta
e^{\frac{\lambda}{3} n}$. Moreover, if $|f^nx|<\hat\delta$, then
$|Df^{n}(x)|\geq e^{\frac{\lambda}{3} n}$.
\end{lemma}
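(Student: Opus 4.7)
The plan is to adapt the standard Benedicks--Carleson binding argument, which splits the orbit $\{y_i := f^i x\}$ into \emph{free} segments (when $y_i$ is far from $0$) and \emph{bound} segments (when $y_i$ is close to $0$, its image $y_{i+1}$ then shadowing the critical orbit $f 0, f^2 0, \ldots$). Along free segments the factor $|f'(y_i)| = 2a|y_i|$ is bounded below directly, while along bound segments hypothesis (A2) supplies the missing expansion.

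I would first fix a constant $\delta_0 \in (0,1)$ chosen so that $2a\delta_0 \geq e^{\lambda/3}$ (possible since $a$ is close to $2$ and $e^{\lambda/3}=2^{3/10}$) and so that the shadowing estimates below are valid on $(-\delta_0,\delta_0)$. If $\hat\delta \geq \delta_0$, every $|y_i| \geq \delta_0$ gives $|f'(y_i)| \geq e^{\lambda/3}$, whence $|Df^n(x)| \geq e^{\lambda n/3} \geq \hat\delta e^{\lambda n/3}$, using $\hat\delta \leq 1$.

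If $\hat\delta < \delta_0$, I would locate the close returns $p_1 < \cdots < p_s$ with $|y_{p_k}| < \delta_0$ and attach to each a bound period $\ell_k$, defined as the largest integer for which
\[
|y_{p_k+1+j} - f^{j+1} 0| \;\lesssim\; a|y_{p_k}|^2 \cdot |Df^j(f 0)|
\]
keeps $y_{p_k+1+j}$ within a fixed fraction of $|f^{j+1} 0|$ for all $0 \leq j \leq \ell_k$. Using $|f^{j+1} 0| \leq 1$ together with (A2) forces $\ell_k \lesssim \log(1/|y_{p_k}|)$. A Koebe-type distortion estimate then gives $|Df^{\ell_k}(y_{p_k+1})| \geq C|Df^{\ell_k}(f 0)| \geq C e^{\lambda \ell_k}$, so the gain over $[p_k, p_k+\ell_k]$ is
\[
|Df^{\ell_k+1}(y_{p_k})| \;\geq\; 2aC|y_{p_k}|\, e^{\lambda \ell_k} \;\geq\; e^{(\lambda/2)(\ell_k+1)},
\]
beating the target rate $\lambda/3$; between bound periods $|y_i| \geq \delta_0$ contributes rate $\geq \lambda/3$. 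Multiplying over all segments and absorbing a single worst-case factor $|f'(y_0)| \geq 2a\hat\delta$ yields $|Df^n(x)| \geq \hat\delta e^{\lambda n/3}$.

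For the ``moreover'' clause, $|y_n| < \hat\delta$ lets me treat the end of the orbit as an additional close return: the preimage relation $y_n = 1 - a y_{n-1}^2$ forces $|y_{n-1}| \geq \sqrt{(1-\hat\delta)/a}$, which is bounded below by a fixed positive constant, so the final segment behaves like the start of a new bound period. Re-running the partition with $y_n$ as an extra close-return marker absorbs the trailing $\hat\delta$. The main obstacle is the distortion control during the bound period: one must verify that the shadowing orbit stays uniformly far from $0$ so that $|Df^{\ell_k}(y_{p_k+1})| \asymp |Df^{\ell_k}(f 0)|$ with a constant independent of $k$. This is where hypothesis (A3) enters decisively --- by lower-bounding $|f^{j+1} 0|$ it prevents the shadowing intervals from degenerating, so the rate $\lambda/2$ in the bound period genuinely dominates $\lambda/3$ uniformly in $\hat\delta$.
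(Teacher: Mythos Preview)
The paper omits the proof entirely, deferring to the companion paper; your binding-argument sketch is the standard Benedicks--Carleson route and is essentially what is done there. One piece of bookkeeping is off, however, and it matters for the ``moreover'' clause.

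The prefactor $\hat\delta$ does not come from $|f'(y_0)|$. If $\hat\delta \le |y_0| < \delta_0$, then $y_0$ itself initiates a bound period, and its small derivative $2a|y_0|$ is fully recovered by the subsequent shadowing of the critical orbit --- that is precisely the content of the binding estimate you quote. The loss of $\hat\delta$ arises instead from an \emph{incomplete} bound period at the \emph{end} of the orbit: if the last close return is $y_{p_s}$ with bound period $\ell_s$ and the orbit stops at some $n \le p_s + \ell_s$, one only has
\[
|Df^{\,n-p_s}(y_{p_s})| \;\gtrsim\; |y_{p_s}|\cdot e^{\lambda(n-p_s-1)} \;\ge\; \hat\delta\, e^{\lambda(n-p_s-1)},
\]
and the small factor $|y_{p_s}|\ge\hat\delta$ has not yet been absorbed by a completed binding. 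Correspondingly, the ``moreover'' clause is not explained by the size of $|y_{n-1}|$ --- that observation is true but tangential. The mechanism is rather that when $|y_n|<\hat\delta$, time $n$ is itself a close return, so the final block $[p_s,n]$ is a genuine return-to-return segment and carries the full rate $\lambda/3$; one checks (comparing the bound-period length $\ell_s\lesssim\log(1/\hat\delta)$ against the slow recurrence from (A3)) that the bound period after $p_s$ has already ended, so no incomplete tail remains and no $\hat\delta$ is lost.
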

A proof of this lemma is 
almost identical to that of \cite[ Lemma 2.5]{ChuTak}, and hence is omitted. 
Although particular values of $\hat\delta$ were chosen there,
this choice is not essential.

 To deal with the loss of expansion due
to returns to the critical region we mimic the binding
argument of Benedicks $\&$ Carleson \cite{BC85,BC91}: subdivide the
interval into pieces, and deal with them independently. 
For $p>0$ let
\begin{equation}\label{Theta}
\delta_p=\sqrt{\frac{e^{-\varepsilon p}}{10}\left[\sum_{i=0}^{p-1}\frac{|Df^i(f0)|}{|f^{i+1}0|}\right]^{-1}}.
\end{equation}
If $\delta_p\leq |x|<\delta_{p-1}$, then
we regard the orbit of $x$ as bound to the orbit of $0$ up to time
$p$.

\begin{lemma}\label{P}
For any $\varepsilon>0$ there exists $N>0$ such that if $p\geq N$ and
 $\delta_p\leq |x|<\delta_{p-1}$, then:
\begin{itemize}

\item[(a)] $|Df^{p}(x)|\geq
e^{\frac{\lambda}{3}p}$;

\item[(b)]  $\log|x|^{-\frac{2}{\log 5}}\leq 
p\leq\log|x|^{-\frac{2}{\lambda}}$.
\end{itemize}
\end{lemma}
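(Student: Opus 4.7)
My approach is the binding argument of Benedicks--Carleson. Set $e_i=|f^i x-f^i 0|$. Starting from $e_1=ax^2$ and using the identity $f(u)-f(v)=-a(u+v)(u-v)$, an induction yields
\[e_i\leq 2ax^2\,|Df^{i-1}(f0)|\]
as long as $e_j\leq|f^j 0|/2$ for every $j<i$. I would verify this remains in force up to $i=p$ by summing the relative errors:
\[\sum_{i=1}^{p-1}\frac{e_i}{|f^i 0|}\leq 2ax^2\sum_{i=0}^{p-2}\frac{|Df^i(f0)|}{|f^{i+1}0|}=2ax^2\,\Sigma_{p-1},\]
which under $|x|<\delta_{p-1}$ and the defining relation $\delta_{p-1}^2\Sigma_{p-1}=e^{-\varepsilon(p-1)}/10$ is bounded by $2ae^{-\varepsilon(p-1)}/10<1/2$. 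The same estimate also yields the distortion bound $|Df^{p-1}(f(x))|\geq c_0\,|Df^{p-1}(f0)|$ for a universal $c_0>0$, so by the chain rule and (A2),
\[|Df^p(x)|=2a|x|\cdot|Df^{p-1}(f(x))|\geq 2ac_0\,|x|\,e^{\lambda(p-1)}.\]

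For (b), I would estimate $\delta_p$ directly from \eqref{Theta}. The upper bound $p\leq-2\log|x|/\lambda$ is equivalent to $\delta_{p-1}\leq e^{-\lambda p/2}$; it follows by bounding $\Sigma_{p-1}$ below by its last term, which by (A2) is at least $e^{\lambda(p-2)}$. The lower bound $p\geq-2\log|x|/\log 5$ is equivalent to $\delta_p\geq 5^{-p/2}$; it follows by bounding $\Sigma_p$ above via the trivial derivative estimate $|Df^i(f0)|\leq(2a)^i\leq 4^i$ together with (A3)'s $|f^{i+1}0|^{-1}\leq e^{\sqrt{i+1}/100}$, giving $\Sigma_p\leq C\cdot 4^p e^{\sqrt p/100}$. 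Since $\log(5/4)>0$, taking $\varepsilon<\log(5/4)$ and $p\geq N$ sufficiently large forces $\delta_p^2\geq 5^{-p}$.

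For (a), I would substitute $|x|\geq\delta_p=(e^{-\varepsilon p}/(10\Sigma_p))^{1/2}$ into the inequality $|Df^p(x)|\geq 2ac_0|x|e^{\lambda(p-1)}$ established above, obtaining $|Df^p(x)|^2\gtrsim e^{-\varepsilon p}\,|Df^{p-1}(f0)|^2/\Sigma_p$. This reduces the claim to an upper estimate of the form $\Sigma_p\leq C\,e^{(4\lambda/3-\varepsilon)p}$. The main obstacle is that the crude bound used in (b) decays only as $4^p=e^{(2\log 2)p}$, exceeding the target $e^{(1.2\log 2)p}$. A sharper argument is required, which leverages (A2) along tail segments of the critical orbit together with (A3) to handle returns to the critical region, giving a Benedicks--Carleson--type bound of the form $|Df^i(f0)|\leq e^{(\log 2+o(1))i}$ and hence $\Sigma_p\leq e^{(\log 2+o(1))p}$. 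Inserting this together with (A2) and (A3) produces $|Df^p(x)|^2\gtrsim e^{(\lambda-\varepsilon)p-O(\sqrt p)}$, which exceeds $e^{2\lambda p/3}$ for $\varepsilon<\lambda/3$ and $p\geq N$ large.
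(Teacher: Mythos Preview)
Your treatment of (b) is correct and essentially matches the paper's (brief) argument: the paper derives the first inequality in (b) from $\delta_p^2\geq 5^{-p}$ via $|Df|\leq 4$ and (A3), and cites \cite[Lemma~2.3]{ChuTak} for (a) and the second inequality in (b).

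Your argument for (a), however, has a real gap. The binding set-up and the reduction to controlling $|Df^{p-1}(f0)|^2/\Sigma_p$ are fine, but the claimed ``Benedicks--Carleson--type bound'' $|Df^i(f0)|\leq e^{(\log 2+o(1))i}$ is not available from (A1)--(A4). At $a=2$ one has $f^i0=-1$ for $i\geq 2$ and $|Df^i(f0)|=4^i$, so the critical-orbit derivative can grow like $e^{(2\log 2)i}$, not $e^{(\log 2)i}$; for $a$ close to $2$ the same behavior persists over long stretches. Thus neither $\Sigma_p$ nor $|Df^i(f0)|$ admits a useful absolute upper bound of the size you assert.

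The fix is to bound the \emph{ratio} $\Sigma_p/|Df^{p-1}(f0)|$ instead of $\Sigma_p$ alone. Write $|Df^{p-1}(f0)|=|Df^{p-1-i}(f^{i+1}0)|\cdot|Df^i(f0)|$, so that
\[
\frac{\Sigma_p}{|Df^{p-1}(f0)|}=\sum_{i=0}^{p-1}\frac{1}{|Df^{p-1-i}(f^{i+1}0)|\,|f^{i+1}0|}.
\]
By (A3) the critical orbit stays outside $(-\hat\delta,\hat\delta)$ with $\hat\delta=e^{-\sqrt p/100}$ up to time $p$, so Lemma~\ref{exp2} gives $|Df^{p-1-i}(f^{i+1}0)|\geq e^{-\sqrt p/100}e^{\lambda(p-1-i)/3}$. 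Combined with $|f^{i+1}0|\geq e^{-\sqrt{i+1}/100}$ this yields $\Sigma_p/|Df^{p-1}(f0)|\leq e^{O(\sqrt p)}$, hence
\[
|Df^p(x)|^2\gtrsim \frac{e^{-\varepsilon p}}{\Sigma_p}\,|Df^{p-1}(f0)|^2\geq e^{-\varepsilon p-O(\sqrt p)}\,|Df^{p-1}(f0)|\geq e^{(\lambda-\varepsilon)p-O(\sqrt p)},
\]
which exceeds $e^{2\lambda p/3}$ once $\varepsilon<\lambda/3$ and $p\geq N$. The missing ingredient is precisely Lemma~\ref{exp2} applied to tail segments of the critical orbit, not an upper bound on the derivative along that orbit.
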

\begin{proof}
(a) and the second inequality in (b) are due to \cite[Lemma 2.3]{ChuTak}.
Rearranging $|x|^2\geq \delta_p^2\geq 5^{-p}$ which follows from
the definition of $\delta_p$ in \eqref{Theta}, and then using $|Df|\leq 4$, 
(A3) yield the first inequality in (b).
\end{proof}

\subsection{Construction of a partition with slowly recurrent points.}\label{slow}
We construct a partition of a small neighborhood of the critical point which is well-adapted
to later constructions.

To start,
for each $p> N$ cut the interval $[\delta_p,\delta_{p-1})$ into $\left[ e^{3\varepsilon
p}\right]$-number of intervals of equal length and 
denote them by $\hat I_{p,j}$ $(j=1,2,\ldots,\left[ e^{3\varepsilon
p}\right])$, from the right to the left. %Let
%$\hat I_{p,-j}=-\hat I_{p,j}$, which is the mirror image of $\hat I_{p,j}$ with respect to $0$.
This defines a partition of the interval $(0,\delta_N)$, but it is not
satisfactory for our construction, because
there is no control over the iterates of the boundary points of the partition elements. 
To rectify this, we show in the next lemma the existence of a point
in each $\hat I_{p,j}$ which is slowly recurrent to the critical point $0$.
We then use these points as partition points.

 \begin{lemma}\label{miwa}
For each $(p,j)$ there exists 
$x\in \hat I_{p,j}$ such that $|f^{n}x|\geq
\delta_N e^{-\varepsilon n}$ for every $n\geq
\varepsilon^{-1}$.
\end{lemma}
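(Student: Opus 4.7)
The plan is a Borel--Cantelli argument in Lebesgue measure. For each $n\geq \varepsilon^{-1}$ define the bad set
\[
B_n=\bigl\{x\in \hat I_{p,j}\colon |f^n x|<\delta_N e^{-\varepsilon n}\bigr\},
\]
and aim to show $\sum_{n\geq \varepsilon^{-1}}|B_n|<|\hat I_{p,j}|$, so that any point of $\hat I_{p,j}\setminus\bigcup_n B_n$ satisfies the desired slow-recurrence condition.

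To estimate $|B_n|$, I would partition $\hat I_{p,j}$ into maximal subintervals $\omega$ on which $f^n$ is monotone with bounded distortion, via the standard Benedicks--Carleson construction: subdivide each time a subinterval reaches the critical neighborhood, and account for the derivative loss with a bound period, during which Lemma \ref{P}(a) restores a factor $\geq e^{(\lambda/3)p}$ of expansion; outside the critical neighborhood use Lemma \ref{exp2}. The outcome is a uniform distortion constant $D$ independent of $n$, so that on each such $\omega$
\[
\frac{|B_n\cap \omega|}{|\omega|}\;\leq\; D\,\frac{|f^n(B_n\cap \omega)|}{|f^n\omega|}\;\leq\; D\,\frac{2\delta_N e^{-\varepsilon n}}{|f^n\omega|}.
\]
For pieces $\omega$ whose image $f^n\omega$ has already reached a fixed macroscopic scale, $|f^n\omega|$ is bounded below by a constant and the bound $D\cdot 2\delta_N e^{-\varepsilon n}|\omega|$ is summable. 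For pieces still inside a bound period, the initial expansion $|Df^p|\geq e^{(\lambda/3)p}$ on $\hat I_{p,j}$ coming from Lemma \ref{P}(a), together with the lower bound on $\delta_p$ in Lemma \ref{P}(b), guarantees that $|f^n\omega|$ is large compared with $|\omega|/|\hat I_{p,j}|$, and the contribution again summable. Totalling over $\omega$ and then over $n\geq\varepsilon^{-1}$ yields
\[
\sum_{n\geq \varepsilon^{-1}} |B_n|\;\leq\; \frac{C\delta_N\, e^{-1}}{1-e^{-\varepsilon}}\,|\hat I_{p,j}|,
\]
which is smaller than $|\hat I_{p,j}|$ once $N$ is taken large enough (after $\varepsilon$ is fixed) so that $\delta_N$ is sufficiently small.

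The main obstacle is the bounded-distortion estimate for $f^n$ on $\hat I_{p,j}$ through its successive returns to the critical region. This is the standard but delicate BC bookkeeping: each descendant subinterval has a total bound time dominated by its free time, and the logarithmic derivative of $f$ along the free orbit picks up only the summable Koebe-type contribution, so cumulative distortions remain uniformly bounded. Most of this machinery was already assembled in \cite{ChuTak} and can be invoked here rather than re-derived; the only new ingredient needed is to thread the argument through the specific starting interval $\hat I_{p,j}$ and check that the relative Lebesgue measure of $B_n$ decays geometrically in $n$. Once this is in place the lemma follows immediately from the displayed summation and the nonemptiness of the complement.
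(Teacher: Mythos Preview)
Your Borel--Cantelli strategy differs from the paper's approach, which is constructive: the paper builds a nested sequence $\omega_0\supset\omega_1\supset\cdots$ of intervals, at each essential return to the critical region explicitly selecting a sub-$\hat I_{p',j'}$ at distance $\geq\delta_N e^{-\varepsilon n}$ from $0$, and then directly checks the recurrence bound at all intermediate iterates. Your sketch, however, has a genuine gap.

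The main problem is the treatment of bound periods. You claim that for pieces inside a bound period ``the initial expansion \ldots\ guarantees that $|f^n\omega|$ is large''. This is false: during a bound period following a return at time $t$, the image $f^n\omega$ is shadowing the critical orbit and is exponentially \emph{small}, not large. What actually makes $B_n\cap\omega$ empty in this regime is not the size of $f^n\omega$ but its \emph{location}: it lies within a tiny neighborhood of $f^{n-t+1}0$, and it is condition (A3), namely $|f^m0|\geq e^{-\sqrt m/100}$, that forces this point to be at distance $\geq\delta_N e^{-\varepsilon n}$ from $0$. You never invoke (A3), yet without it the lemma is simply false; (A3) is precisely the hypothesis engineered for this conclusion, and the paper's proof uses it twice (for the initial bound period and for all subsequent ones).

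There is a second issue: the factor $\delta_N$ in your final displayed bound is not justified. At free return times, $|f^n\omega|$ is itself at most of order $\delta_N$ (in the paper's estimates, $|f^{n_{l+1}}\omega_l|\geq \delta_N e^{-O(\varepsilon^2 n_{l+1})}$), so the ratio $2\delta_N e^{-\varepsilon n}/|f^n\omega|$ carries no residual $\delta_N$. You are left with a bound of order $\sum_n e^{-\varepsilon n}\approx 1/\varepsilon$, which is large and does not yield $\sum_n|B_n|<|\hat I_{p,j}|$. To salvage the Borel--Cantelli route you would have to show that the only iterates contributing to $\sum|B_n|$ are the essential returns and then exploit their sparsity; but establishing that requires (A3) and the bound-period estimates $q_i\leq O(\varepsilon^2 n)$ that the paper's proof develops, at which point the argument essentially coincides with the constructive one.
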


\begin{proof}
Set $t_0=0$, $\omega_0=\hat I_{p,j}$ and $p_0=p$. 
For every $\varepsilon^{-1}\leq n\leq p_0$ we have
\begin{equation}\label{meq3}d(0,f^n\omega_{0})\geq |f^n0|-|f^n\omega_0|
\geq(1/2)|f^n0|\geq(1/2)e^{-\frac{\sqrt{n}}{100}}\geq e^{-\varepsilon n},\end{equation}
where we have used the bounded distortion of $f^{n-1}$ on $f\omega_0$ from \cite[Lemma 2.1]{ChuTak} for the second inequality. The third one follows from (A3).
The last one holds for sufficiently small $\varepsilon$.

By induction we choose 
a sequence $n_0<n_1<\cdots$ of integers and a sequence
$\omega_0\supsetneq\omega_1\supsetneq\cdots$ 
of closed intervals such that 
for every $k\geq0$,
\begin{equation}\label{induction}
 f^{n_k}\omega_k=\hat I_{p_k,j_k}\ \text{for some $p_k,j_k$ and}\ d(0,f^{n}\omega_{k})\geq \delta_N
e^{-\varepsilon n}\ \ \text{for every $n_k\leq n\leq n_{k}+p_k-1$}.\end{equation} 
From \eqref{meq3} and \eqref{induction},
the point in the singleton $\bigcap_{k\geq0}\omega_k$ satisfies the desired property.

For the rest of the proof, we assume \eqref{induction} holds for some $k=l$,
and then indicate how to choose $t_{l+1}$ and $\omega_{l+1}$ 
for which \eqref{induction} holds for $k=l+1$.
An argument to show \eqref{induction} for $k=0$ is included in
the general step of the induction below.

Given $n_l$, $\omega_l$
such that  $f^{n_l}\omega_l=\hat I_{p_l,j_l}$,
 define 
$n_l+p_l\leq t_1<t_2<\cdots$ inductively as follows:
 $t_1$ is the smallest $t\geq n_{l}+p_{l}$ with $d(0,f^t\omega_l)<\delta_N$.
Suppose $t_i$ has been defined. If $f^{t_i}\omega_l$ intersects
no more than two $\hat I_{p,j}$-intervals,
say $\hat I_{p,j}$ and $\hat I_{p',j'}$, $p\leq p'$ (possibly $\hat I_{p,j}=\hat I_{p',j'}$),
then define $t_{i+1}$ to be the smallest $t\geq t_{i}+q_i$ with
$d(0,f^t\omega_l)<\delta_N$,
where $q_i=p$ if $f^{t_i}\omega_l\subset(-\delta_N,\delta_N)$, and $q_i=1$ 
otherwise.
If $f^{t_{i}}\omega_l$ intersects
more than three $\hat I_{p,j}$-intervals, then $t_{i+1}$ is undefined.

The expansion
estimates in Lemma \ref{exp2} and Lemma \ref{P}
imply that one finally reaches $t_s$ such that 
$f^{t_{s}}\omega_l$ intersects
more than three $\hat I_{p,j}$-intervals.
For all $\theta\in f^{n_l+p_l}\omega_l$ we have
\begin{equation}\label{ineq}
|Df^{n_{l+1}-n_l-p_l}(\theta)|\geq \delta
\exp\left(\frac{\lambda}{3}\sum_{\stackrel{1\leq i\leq s-1}{q_i\neq1}}q_i\right)\geq\delta_N.\end{equation}
This and  $|f^{n_l+p_l}\omega_l|\geq e^{-5\varepsilon p_l}$ which follows from
\cite[Lemma 2.6(a)]{ChuTak} yield
$|f^{n_{l+1}}\omega_l|\geq \delta_N e^{-5\varepsilon p_l}.$ By Lemma
\ref{P}(b),
$p_l
\leq (2/\lambda)(-\log\delta_N+\varepsilon n_l)\leq
(3/\lambda)\varepsilon n_l,$ and thus $|f^{n_{l+1}}\omega_l|\geq \delta_N
e^{-5\varepsilon p_l} \geq\delta_N
e^{-\frac{15}{\lambda}\varepsilon^2 n_{l+1}}$. From this
and the upper estimate
of the length of $\hat I_{p,j}$ in 
\cite[Lemma 2.6(b)]{ChuTak}, one can choose $\omega_{l+1}\subset \omega_l$ 
such that $f^{n_{l+1}}\omega_{l+1}=\hat I_{p_{l+1},j_{l+1}}$
and $d(0, f^{n_{l+1}}\omega_{l+1})\geq\delta_N e^{-\varepsilon n_{l+1}}$.

It is left to estimate the distance of the forward 
iterates of $f^{n_l+p_l-1}\omega_{l+1}$ to the critical point.
We first consider the case $n=t_i$ with $q_i\neq1$.
\eqref{ineq} implies that for some $\theta\in
f^{n_l+p_l}\omega_l$ we have
\[
2\geq |f^{n_{l+1}}\omega_l|=|Df^{n_{l+1}-n_l-p_l}(\theta)|
\cdot |f^{n_l+p_l}\omega_l|\geq \delta_N
e^{\frac{\lambda}{3}q_{i}}
e^{-5\varepsilon p_l}.\] Taking logs and then rearranging the
result we have
\begin{equation}\label{inessential}
q_{i}\leq
-(4/\lambda)\log\delta_N+(15/\lambda)\varepsilon
p_l\leq(16/\lambda)\varepsilon
p_l\leq(48/\lambda^2)\varepsilon^2n_l.
\end{equation}
Hence, for $n=t_i$ we obtain
\begin{equation*}
d(0,f^{n}\omega_l)\geq\delta_{q_{i}}\geq 5^{-q_{i}} \geq
e^{-\varepsilon n_l}\geq \delta_N e^{-\varepsilon n}.\end{equation*} 

Next we consider the case
$n\in(t_{i},t_{i}+q_{i}]$ with $q_i\neq1$.
Let $J$ denote the minimal interval containing
$f^{t_{i}}\omega_l$ and $0$.
Then
$|fJ|\leq 2\delta_{q_{i}}^2,$ and 
the distortion of
$f^{n-t_{i}-1}$ on $f J$ is bounded by
\cite[Lemma 2.1]{ChuTak}. 
Hence
$$|f^{n-t_i}J|\leq2|Df^{n-t_{i}-1}(f0)| |fJ|\leq
4|Df^{n-t_{i}-1}(f0)|\delta_{q_{i}}^2\leq(9/10)|f^{n-t_{i}+1}0|,$$
and thus
\begin{equation*}
 d(0,f^n\omega_l)\geq |f^{n-t_i+1}0|-|f^{n-t_i}J|\geq(1/10)|f^{n-t_{i}+1}0|
\geq(1/10)e^{-\frac{1}{100}\sqrt{q_{i}}} \geq e^{-\varepsilon t_{i}}
\geq\delta e^{-\varepsilon n}.\end{equation*}
 The second inequality follows from (A3) and
the third from \eqref{inessential}.

Note that the above argument may be extended to the case 
$n\in[n_{l+1}+1,n_{l+1}+p_{l+1}-1]$.
For all $n\in [n_l+p_l-1,n_{l+1}-1]$
other than those treated so far, the desired estimate holds
because $f^n\omega_l$ is not
contained in $(-\delta_N,\delta_N)$ and intersects at most one $\hat I_{p,j}$.
The assumption of the induction has been recovered. \end{proof}

%All the estimates in
%\cite{ChuTak} relative to $I_{p,j}$ continue to hold, with
%$I_{p,j}$ replaced by $I_{p,j,{\rm new}}$ and with slightly worse
%constants. 

%\begin{lemma}\label{holder}
%The following estimates hold:
%\begin{itemize}
%\item[(a)] $|f^pI_{p,j}|\geq e^{-5\varepsilon p}$;
%\item[(b)] $|I_{p,j}|\leq d(0,I_{p,j})^{1+\frac{\varepsilon}{3}}$;
%\item[(c)] for all $x,y\in I_{p,j}$,
%$\log\frac{J^p(x)}{J^p(y)}\leq |f^px-f^py|^{\varepsilon^2}.$
%\end{itemize}
%\end{lemma}

%We
%finish the construction of the adapted partition. 
In view of Lemma \ref{miwa}, for each  $\hat I_{p,j}$ fix once and for all a point
$x_{p,j}\in \hat I_{p,j}$ such that $|f^{n}x_{p,j}|\geq
\delta_N e^{-\varepsilon n}$ holds for every $n\geq
\varepsilon^{-1}$. 
Set $\delta=x_{N,1}.$
Note that $\delta<\delta_N$, and $\delta\to0$ as $N\to\infty$.
Using the points $x_{p,j}$ as partition points
we construct a countable partition of the interval $(0,\delta)$ 
in such a way that:\footnote{Note
that not all  $x_{p,j}$ are used in this construction, because of the requirement (ii).}

\begin{itemize}

\item[(i)]
each element of the partition contains exactly one element of $\{\hat I_{p,j}\}$;

\item[(ii)]
each element of the partition is contained in three contiguous elements of
$\{\hat I_{p,j}\}$.
\end{itemize}
The construction is straightforward. The boundary points of 
partition elements belong to $\{x_{p,j}\}.$ 
Let $I_{p,j}$ denote the element of the partition containing
$\hat I_{p,j}$.
Let $ I_{p,-j}=-I_{p,j}$, the mirror image of $I_{p,j}$ with respect to $0$.

\subsection{Construction of dynamical partitions}\label{subdivide}
Let $\Lambda^+=I_{N,1}$, which is the right extremal $I_{p,j}$-interval.
Let
$\Lambda^-=-\Lambda^+$ and $\Lambda=\Lambda^-\cup\Lambda^+.$ 
Let $\hat x$ denote the orientation reversing fixed point of $f$ in $X$ and
set $\hat X=[-\hat x,\hat x]$. 
By induction on the number of iterations we construct a ``decreasing" sequence
$\{\tilde{\mathcal P}_n\}_{n\geq 0}$ of partitions of
$\hat X$ into intervals, and introduce the notion of bound/free states.

Start
with $\tilde{\mathcal P}_0=\{[-\hat x,-\delta],[\delta,\hat x]\}\cup \{I_{p,j}\}_{p,j}$. 
We refer to the intervals 
$f[-\hat x,-\delta]$, $f[\delta,\hat x]$, $fI_{p,j}$ and $f^pI_{p,j}$ as {\it free}, and to $f^iI_{p,j}$
$(1\leq i\leq p-1)$ as {\it bound}.
Call $p$ a {\it bound period of $I_{p,j}$ at time $0$}.

Let $n\geq 1$.
The $f^n$-images of elements of $\tilde{\mathcal
P}_{n-1}$ are in two phases: either \emph{bound} or \emph{free}.
If $\omega\in\tilde{\mathcal P}_{n-1}$, $f^n\omega$ is free and $d(0,f^n\omega)<\delta$,
then $\tilde{\mathcal P}_n$ subdivides $\omega$. For each 
resulting element 
$\omega'\in\tilde{\mathcal P}_n|\omega$ with $d(0,f^n\omega')<\delta$
an integer $p_n(\omega')$ is attached; this integer is 
called a \emph{bound period of $\omega'$ at time $n$}. 
%We say $n$ is a {\it free return time} of $\omega'$.

Given $\omega\in\tilde{\mathcal P}_{n-1},$ $\tilde{\mathcal
P}_{n}|\omega$ is defined as follows. 
If
$f^n\omega$ is free and
contains more than two $I_{p,j}$-intervals, then let $\tilde{\mathcal P}_n$
subdivide $\omega$ according to the $(p,j)$-locations of its
$f^n$-image. 
In all other cases, let $\tilde{\mathcal
P}_{n}|\omega=\{\omega\}$.
Partition points are inserted only to ensure that
the $f^n$-images of $\tilde{\mathcal P}_n$-elements intersecting $(-\delta,\delta)$ contain
exactly one $I_{p,j}$.
The $f^n$-images out of $(-\delta,\delta)$ are treated as follows.
Let $\omega'\subset\omega$ be such that $f^n\omega'$ is a component of $f^n\omega\setminus
(-\delta,\delta)$. We let $\omega'\in\tilde{\mathcal P}_n$ if $|f^n\omega'|\geq| \Lambda^+|$.
Otherwise, we glue $\omega'$ to the adjacent element whose $f^n$-image is
contained in $\Lambda^\pm$.

The bound periods at time $n$ of the elements of
$\tilde{\mathcal P}_n|\omega$
 are determined by the $p$-locations of their $f^n$-images. 
 Namely, if $\tilde{\mathcal P}_n$ subdivides $\omega$ and
 $\omega'\in\tilde{\mathcal P}_n|\omega$, then
 $p_n(\omega')=p$ where $p$ is such that $f^n\omega'\supset I_{p,j}$ holds for some $j$.
 If $\tilde{\mathcal P}_n|\omega=\{\omega\}$, then
 $p_n(\omega)=\min\{p\colon I_{p,j}\cap f^n\omega\neq\emptyset\ \text{ for some $j$}\}$.

To proceed, for $\omega'\in\tilde{\mathcal P}_n$
we say $f^{n+1}\omega'$ is {\it bound} if
there exists $k\leq n$ such that $\omega'\in\tilde{\mathcal P}_k$,
$p_k(\omega')$ makes sense and satisfies
$n+1<k+p_k(\omega')$. Otherwise we say $f^{n+1}\omega'$ is {\it
free}. This completes the construction of $\tilde{\mathcal P}_n$ $(n=0,1,\ldots)$.

The following bounded distortion can be proved similarly\footnote{Although the value of 
$``\delta"$ is slightly different from the one used in \cite{ChuTak}, the technical adjustment is minimal.
} 
to \cite[Lemma 2.7]{ChuTak}.
Set $C_0=\exp(-\delta^3)$. Let
$\omega\in\tilde{\mathcal P}_{n-1}$ and suppose that  
$f^n\omega$ is free. Then 
\begin{equation}\label{bounddist}
\frac{|Df^n(x)|}{|Df^n(y)|}\leq C_0 \ \ \forall x,y\in\omega.
\end{equation}

\subsection{Inducing time estimates}
We define inductively a partition $\mathcal
Q$ of $\Lambda$ into intervals 
and an associated inducing time $R\colon\mathcal Q\to\mathbb N$ as follows.
Let $\omega\in\tilde{\mathcal P}_{n-1}|\Lambda$. If
$f^{n}\omega$ is free and
 $f^{n}\omega\supset3\Lambda^+$ or $3\Lambda^-$, then
set $\omega\cap f^{-n}\Lambda^{+}\in\mathcal Q$ or $\omega\cap
f^{-n}\Lambda^{-}\in\mathcal Q,$ and $R(\omega)=n$. We iterate the
remaining parts $f^{n}\omega\setminus\Lambda^+$ or
$f^{n}\omega\setminus\Lambda^-$, which is the union of elements of
$\tilde{\mathcal P}_{n}$, and repeat the same procedure.
By definition, 
for each $\omega\in\mathcal Q$, $f^{R(\omega)}$ sends $\omega$
diffeomorphically onto $\Lambda^+$ or $\Lambda^-$.

Set
$\hat\zeta=|\Lambda^+|/(2C_0)\in(0,1)$ and $C_1=1+\hat\zeta^{-1}.$
Note that $\hat\zeta\to0$ and $C_1\to\infty$ as $\delta\to0$.
Let $\theta=10^{-10000}$ and set $\zeta=\max\{e^{-\frac{\lambda}{14}},(1-\hat\zeta)^{ \theta }\}$.
%The choice of $\theta$ will be made explicit in the proof.
Lemma \ref{escape} below 
applied to $\Lambda^\pm$ implies that 
the measure of the tail set
$$\{R>n\}=\bigcup_{\omega\in\mathcal Q\colon R(\omega)>n}\omega$$
decays exponentially fast. In particular,
$\mathcal Q$ is a partition of a full measure subset of $\Lambda$.
\begin{lemma}\label{escape}
There exists $k_0=k_0(\delta)$ such that the following holds for every $k\geq k_0$:
let $\omega\in\tilde{\mathcal P}_{k-1}$ and suppose that 
$\omega\subset \{R>k\}$ and
$f^k\omega$ is 
free. Then 
$$|\{R> k+l\}\cap\omega|\leq  C_1\zeta^l|\omega|\ \text{ for every } l\geq (16\varepsilon/\lambda) k.$$  
\end{lemma}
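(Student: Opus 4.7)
The plan is to bound the decaying quantity $|\{R > k+j\} \cap \omega|$ by tracking, via an induction on $j \in \{0, 1, \ldots, l\}$, how $\tilde{\mathcal P}_{k+j}$ refines $\omega$ and how a definite fraction of each surviving piece is extracted into $\mathcal Q$ at successive \emph{escape returns}: free times $n$ such that the image $f^n\omega'$ of a descendant $\omega' \in \tilde{\mathcal P}_n|\omega$ covers $3\Lambda^{\pm}$. At each such escape return, the bounded distortion \eqref{bounddist}, combined with the partition subdivision rules that keep $|f^n\omega'|$ of order $|\Lambda^+|$, guarantees that the portion of $\omega'$ sent into $\Lambda^\pm$ has relative measure at least $\hat\zeta = |\Lambda^+|/(2 C_0)$; each escape return therefore multiplies the surviving mass by at most $1 - \hat\zeta$.

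First I would control the initial image $|f^k\omega|$ from below by $e^{-5\varepsilon k}$ using \cite[Lemma 2.6(a)]{ChuTak} (the cumulative cost of bound periods in $[0,k]$ is absorbable because $f^k\omega$ is free). Then Lemma \ref{exp2} and Lemma \ref{P}(a), together with \eqref{bounddist}, yield $|f^{k+j}\omega'| \geq C_0^{-1} e^{(\lambda/3) j - 5\varepsilon k}\,|\omega'|/|\omega|$ for every surviving descendant $\omega' \subset \omega$. Once $j \geq (16\varepsilon/\lambda)k$, this ratio reaches order $|\Lambda^+|$ and the escape-return mechanism starts firing; this is precisely where the threshold $l \geq (16\varepsilon/\lambda)k$ enters. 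Between successive escape returns, bound periods have length at most $(2/\lambda)|\log\delta|$ by Lemma \ref{P}(b) and contribute their own factor $e^{(\lambda/3) p}$ of expansion in $p$ steps, so on average the map gains a definite amount of expansion per step and escape returns occur with density at least $\theta$ along $[k, k+l]$.

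Combining, the surviving mass decays by $(1-\hat\zeta)$ at each of the $\geq \theta l$ escape returns, giving a factor $(1-\hat\zeta)^{\theta l}$; alternatively, if a descendant stays away from $(-\delta,\delta)$ on a long segment, Lemma \ref{exp2} alone produces decay of order $e^{-(\lambda/14)l}$. Taking the worse of the two yields $\zeta^l$, while the constant $C_1 = 1 + \hat\zeta^{-1}$ absorbs the transient phase during which the image is still growing toward size $|\Lambda^+|$ and also accounts for the fact that the first escape return can occur anywhere in $[k, k + (16\varepsilon/\lambda)k]$. The principal obstacle will be the bookkeeping across bound periods, during which \eqref{bounddist} is not directly usable; I would handle this through a stopping-time argument that labels each step as \emph{bound}, \emph{free pre-return}, or \emph{escape return}, and estimates the total surviving mass as a product of per-step factors assembled along this labeling, with $C_1$ absorbing the initial-segment irregularities.
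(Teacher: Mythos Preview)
Your argument contains a genuine gap. You assert that ``bound periods have length at most $(2/\lambda)|\log\delta|$ by Lemma~\ref{P}(b)'', but Lemma~\ref{P}(b) gives $p\le (2/\lambda)\log|x|^{-1}$, which is \emph{unbounded} as $|x|\to 0$. A descendant $\omega'\in\tilde{\mathcal P}_n|\omega$ that at some free time $n\in[k,k+l]$ lands in $I_{p,j}$ with $p$ comparable to $l$ (i.e.\ very close to the critical point) will spend almost the entire window $[n,n+p]\subset[k,k+l]$ in a single bound period, during which no escape return occurs. For such descendants your claimed density $\ge\theta$ of escape returns fails, and the product-of-factors bound $(1-\hat\zeta)^{\theta l}$ does not apply. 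Your alternative case (``stays away from $(-\delta,\delta)$'') does not cover these descendants either, since they \emph{do} enter $(-\delta,\delta)$, just once and deeply.

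The paper's proof is organized around precisely this dichotomy. Descendants that enter $(-\delta,\delta)$ at some time in $[k,k+l]$ (the collection $\mathcal Q'$) are \emph{not} estimated via escape-return frequency; instead one records the full itinerary $(n_1,p_1,j_1),\ldots,(n_s,p_s,j_s)$ of critical returns, bounds the measure of each such descendant by $\lesssim e^{-(\lambda/3)\sum p_i}\,|\omega|/|f^k\omega|$ using Lemma~\ref{exp2}, Lemma~\ref{P}(a) and \eqref{bounddist}, and then sums over all admissible itineraries using a combinatorial count $\#\{\text{itineraries with }\sum p_i=P\}\le e^{\varepsilon n}e^{4\varepsilon P}$. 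The $(1-\hat\zeta)$ mechanism you describe is used only for the complementary collection $\mathcal Q''$ of descendants that \emph{never} enter $(-\delta,\delta)$ in $[k,k+l]$, and even there a further split by itinerary length ($\le\theta l$ versus $>\theta l$) is needed, since short itineraries give few escaping components but are themselves few in number. Your single inductive product bound conflates these two decay mechanisms and cannot substitute for the itinerary-sum estimate in the deep-return case.
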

\begin{proof}
%Essentially the same as the proof of \cite[Lemma 2.8]{ChuTak}. 
 Let 
$\mathcal Q'$ denote the set of all 
$\omega'\in \mathcal Q| \{R>k+l\}\cap\omega$
 for which there exists $ n\in [k,k+l]$ such that $d(0,f^n\omega_n)
<\delta$ holds for the element $\omega_n\in\tilde{\mathcal P}_n$ containing $\omega'$. Let $\mathcal Q''$ denote the collection of elements
of $\mathcal Q| \{R>k+l\}\cap\omega$ which do not belong to 
$\mathcal Q'$.

Each $\eta\in\mathcal Q'$ 
has an
\emph{itinerary} $(n_1,p_1,j_1),
\ldots,(n_s,p_s,j_s)$ $(s\leq [l/N])$ that is defined as follows:
$k\leq n_1<\cdots<n_s\leq k+l$ is a sequence of integers,
associated with a sequence $\omega_0\supset\omega_{n_1}\supset\cdots\supset
\omega_{n_s}\supset\eta$ of intervals such that $\omega_{n_i}$ $(i=1,\ldots,s)$ is
the element of $\tilde{\mathcal P}_{n_i}$ containing $\omega$ that arises out of the subdivision at time $n_i$, with $I_{p_i,j_i}\subset f^{n_i} \omega_{n_i}$;
$\omega_{n_s}\in\tilde{\mathcal P}_{k+l-1}$.
Since $|f^{n_s+p_s}\omega_{n_s}|\leq2$, 
for some $x\in\omega_{n_s}$ we have
$|\omega_{n_s}|\leq 2|Df^{n_s+p_s}(x)|^{-1}$.
By Lemma \ref{exp2} and Lemma \ref{P}(a) 
we have
$|Df^{n_{s}+p_s}(x)|\geq \delta e^{\frac{\lambda}{3}\sum_{i=1}^sp_i}|Df^k(x)|$,
and by \eqref{bounddist},
$|Df^k(x)|\geq (1/C_0)|f^k\omega|/|\omega|$. 
Combining these three inequalities we obtain
$$|\eta|\leq|\omega_{n_s}|\leq 2C_0\delta^{-1} e^{-\frac{\lambda}{3}\sum_{i=1}^sp_i}
\frac{|\omega|}{|f^k\omega|},$$
and therefore
\begin{align*}\label{ret}\sum_{\eta\in\mathcal Q'}|\eta|&\leq 2C_0\delta^{-1}\sum_s\sum_{P}e^{-\frac{\lambda}{3}P}
\#\left\{\{(n_i,p_i,j_i)\}_{i=1}^s\colon \sum_{i=1}^sp_i=P\right\}
\frac{|\omega|}{|f^k\omega|}.\end{align*}
From the proof of \cite[Lemma 2.8]{ChuTak}
the cardinality is $\leq e^{\varepsilon n}e^{4\varepsilon P},$
and from the proof of \cite[Sublemma 2.9]{ChuTak}
$n_{i+1}-n_i\leq 2p_i$ holds
for every $1\leq
i\leq s$, where $n_{s+1}> k+l$ is such that $\tilde{\mathcal
P}_{n_{s+1}}$ partitions $\omega_{n_s}$.
It follows that
$l<n_{s+1}\leq n_1+2\sum_{i=1}^{s}p_i.$ 
If $n_1\leq k+l/2$ then
$\sum_{i=1}^{s}p_i\geq l/4$, and therefore 
\begin{equation}\label{ret1}\sum_{\stackrel{\eta\in\mathcal Q'}{n_1\leq k+l/2}}|\eta|
\leq C_0\delta^{-1}\frac{l}{N}\sum_{P\geq l/4}
e^{(8\varepsilon-\frac{\lambda}{3})P}\frac{|\omega|}{|f^k\omega|}\leq
e^{-\frac{\lambda}{13}l}\frac{|\omega|}{|f^k\omega|}.\end{equation}
where the last inequality holds provided $k$ is sufficiently large 
because $l\geq \sqrt{\varepsilon} k$.

For those $\eta\in\mathcal Q'$ with $n_1>k+l/2$, 
a similar reasoning shows
$$|\eta|\leq|\omega_{n_1}|\leq C_0\delta^{-1} e^{-\lambda(n_1-k)}
\frac{|\omega|}{|f^k\omega|}\leq  C_0\delta^{-1} e^{-\frac{\lambda l}{2}}
\frac{|\omega|}{|f^k\omega|}
\leq  e^{-\frac{\lambda l}{3}}
\frac{|\omega|}{|f^k\omega|},$$
and therefore
\begin{equation}\label{ret2}\sum_{\stackrel{\eta\in\mathcal Q'}{n_1> k+l/2}}|\eta|
\leq C_0\delta^{-1}
\frac{l}{N}\sum_{P\leq l}
e^{4\varepsilon P+\varepsilon l-\frac{\lambda}{3}l}\frac{|\omega|}{|f^k\omega|}\leq
e^{-\frac{\lambda l}{4}}\frac{|\omega|}{|f^k\omega|}.\end{equation}

 We now treat elements of $\mathcal Q''$.
 Let $t_1\geq k$ be such that $\omega$ is subdivided at time $t_1$.
 Since $I_{p,j}\supset f^n\omega$ holds for some $n<k$
 we have $|f^k\omega|\geq\delta e^{-5\varepsilon k}$.
If $t_1-k\geq 16(\varepsilon/\lambda) k$, then 
$|f^{t_1}\omega|\geq
\delta e^{-5\varepsilon k}e^{\frac{\lambda}{3}(t_1-k)}\geq \delta e^{\varepsilon k}>2=|X|,$
which is a contradiction. 
 Hence $t_1-k<(16\varepsilon/\lambda) k$, and so $t_1<k+l$.

Let $t\geq k$. We say
$\tilde\omega\in\tilde{\mathcal P}_t|\omega$ is an {\it escaping component at time $t$}
if $\tilde\omega$ arises out of subdivision at time $t$ and satisfies
$d(0,f^t\tilde\omega)=\delta$.
 Let $\mathcal E_1$ denote the collection of escaping components at time $k+r$.
 If $\mathcal E_1=\emptyset$, then $\mathcal Q''=\emptyset$.
 Hence we assume $\mathcal E_1\neq\emptyset$. 

Each $\eta\in\mathcal Q''$ 
has an
\emph{itinerary} $(t_1,\epsilon_1),
\ldots,(t_q,\epsilon_q)$ that is defined as follows:
$k\leq t_1<\cdots<t_q< k+l$ is a sequence of integers,
associated with a nested sequence $\omega\supset\omega_{t_1}\supset\cdots\supset
\omega_{t_q}\supset\eta$ of intervals such that for each $i$, $\omega_{t_i}$ is
an escaping component at time $t_i$ and $\epsilon_i=+$ (resp. $\varepsilon_i=-$)
if $f^{t_i} \omega_{t_i}$ is at the right (resp. left) of the critical point;
$\omega_{t_q}$ is the smallest escaping component containing $\eta$.
Call $q$ the {\it length of the itinerary of} $\eta$.
%Using the previous estimates and the fact that $\omega_{t_q}$ is not subdivided up to time
%$k+l-1$, we have 
%$|\omega_{t_q}|\leq e^{-\lambda l/2}|\omega|/|f^k\omega|.$

For $\theta>0$ let
 $\mathcal Q''_{\leq\theta l}=\{\eta\in\mathcal Q''\colon\text{The length of the itinerary is $\leq \theta
l$}\}$.
The number of all itineraries of length $q$ is $\leq2^q\left(\begin{smallmatrix}l\\q\end{smallmatrix}\right)$, and so the Stirling formula implies one can choose
small $\theta$ such that $\#\mathcal Q''_{\leq\theta l}\leq e^{\lambda l/100}$.
Then
\begin{equation}\label{ret3}\sum_{\eta\in
\mathcal Q''_{\leq\theta l}}|\eta|\leq\#\mathcal Q''_{\theta l}e^{-\lambda l/2}\frac{|\omega|}{
|f^k\omega| }\leq
e^{-\frac{\lambda}{3}l}\frac{|\omega|}{|f^k\omega|}.\end{equation}

To treat elements in 
$\mathcal Q''_{>\theta l}=\{\eta\in\mathcal Q''\colon\text{The length of the itinerary is $> \theta
l$}\}$, for each $q\geq1$
 define a collection $\mathcal E_q$ of escaping components (at variable times) inductively as follows: 
  each $\omega\in\mathcal E_q$ is an escaping component at 
some time, say $t=t(\omega)$.
Let $t'>t$ denote the time at which $\omega$ is subdivided. 
Then $\omega$ contains no or at most two escaping components at time $t'$. 
We let them in $\mathcal E_{q+1}$.
Let $E_q=\bigcup_{\omega\in\mathcal E_q}\omega$.
If $\omega\in\mathcal E_q$ and $\omega\cap E_{q+1}\neq\emptyset$, then
from the bounded distortion \eqref{bounddist},
$$\frac{|\omega\setminus E_{q+1}|}{|\omega|}\geq C_0^{-1}\frac{|f^{t'}(\omega\setminus E_{q+1})|}{|f^{t'}\omega|} \geq C_0^{-1}\frac{|\Lambda^+|}{|X|}=\hat\zeta,$$
and so
$|\omega\cap E_{q+1}|\leq|\omega|-|\omega\setminus E_{q+1}|\leq(1-\hat\zeta)|\omega|.$
 Hence
 $|E_{q+1}|\leq(1-\hat\zeta)|E_q|,$ and thus
$|E_q|\leq(1-\hat\zeta)^q|\omega|$.
By definition, if the itinerary of $\eta\in\mathcal Q''_{>\theta l}$ is of length $q$,
then $\eta$ is contained in an element of $\mathcal E_q$.
Hence
 \begin{equation}\label{ret4}\sum_{\eta\in
\mathcal Q''_{>\theta l}}|\eta|\leq\sum_{\theta l\leq q\leq l}|E_q|\leq
\sum_{q\geq\theta l}(1-\hat\zeta)^q\leq \hat\zeta^{-1}(1-\hat\zeta)^{ \theta l}.\end{equation}

Since $\theta$ is independent of $\delta$ and $\hat\zeta\to0$ as $\delta\to0$,
we have $e^{-\frac{\lambda}{14}}\leq\zeta$.
\eqref{ret1} \eqref{ret2} \eqref{ret3} \eqref{ret4} yield 
 $ |\{R>k+l\}\cap\omega|
\leq e^{-\frac{\lambda}{14}l}+\hat\zeta^{-1}(1-\hat\zeta)^{ \theta l}\leq C_1\zeta^l.$
\end{proof}

\subsection{Bounded distortion}
We prove a statement on distortions.
Let $J$ be an interval.
A differentiable map $g\colon J\to \mathbb R$
without a critical point has {\it distortion bounded by} $\kappa\geq1$ if
$$\sup_{x,y\in J}\frac{|Dg(x)|}{|Dg(y)|}\leq\kappa.$$

Let $J\subset T$ be two intervals and $n>0$ such that $f^n|T$ is strictly monotone.
We say $f^nT$ contains a $\xi$-scaled neighborhood of $f^{n}J$ if the lengths 
of both components
of $f^n(T\setminus J)$ are $\geq \xi|f^nJ|$.
The following is known as the Koebe Principle \cite[Chapter IV.1]{dMvS93}.
\begin{lemma}\label{koebe}
Let $J\subset T$ be two intervals and $n>0$ such that $f^n|T$ is strictly monotone and
$f^nT$ contains a $\xi$-scaled neighborhood of $f^nJ$. Then
$f^n|J$ has distortion bounded by $((1+\xi)/\xi)^2$.
\end{lemma}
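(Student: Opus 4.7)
The proof plan is to apply the standard cross-ratio machinery for maps with negative Schwarzian derivative. The quadratic map $f = f_a$ satisfies $Sf < 0$ away from its critical point $0$, and since $f^n|T$ is strictly monotone by hypothesis, each of the intermediate images $T, fT, \ldots, f^{n-1}T$ is disjoint from $0$. Thus negative Schwarzian derivative is available at every step along the orbit segment of interest.

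The key technical input is a cross-ratio expansion inequality. For nested intervals $J \subset T$ with $T \setminus J = L \cup R$, define
\[ B(T,J) = \frac{|T| \cdot |J|}{|L \cup J| \cdot |R \cup J|}. \]
A standard computation shows that if $g$ is a diffeomorphism with $Sg < 0$ on an open interval containing $T$, then $B(gT, gJ) \geq B(T,J)$. Applying this inductively to each iterate of $f$ on the orbit of $T$, I obtain $B(f^n T, f^n J) \geq B(T,J)$.

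To convert this into a pointwise distortion bound, fix $x, y \in J$ and apply the cross-ratio expansion with $J$ replaced by small subintervals $J_\varepsilon \subset J$ concentrated around $x$ (respectively around $y$). As $\varepsilon \to 0$, the ratio $|f^n J_\varepsilon|/|J_\varepsilon|$ converges to $|Df^n(x)|$ (respectively $|Df^n(y)|$). The $\xi$-scaled neighborhood hypothesis yields $|f^n L|, |f^n R| \geq \xi|f^n J|$, and combining this with the cross-ratio inequality through a short algebraic manipulation produces
\[ \frac{|Df^n(x)|}{|Df^n(y)|} \leq \left(\frac{1+\xi}{\xi}\right)^2. \]

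The only step demanding real care is the bookkeeping that extracts the precise constant $((1+\xi)/\xi)^2$ from the cross-ratio inequality; everything else is formal once negative Schwarzian derivative is in hand. Since this is a classical argument worked out in full detail in \cite[Chapter IV.1]{dMvS93}, in a final writeup I would simply refer to that source rather than reproduce the calculation.
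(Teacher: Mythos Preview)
Your proposal is correct and matches the paper's treatment: the paper does not prove this lemma at all but simply states it as the Koebe Principle and cites \cite[Chapter IV.1]{dMvS93}, exactly the source you invoke. Your sketch of the cross-ratio argument for negative Schwarzian maps is the standard proof found there, so there is nothing to add.
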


Let $J\subset \hat X$ be an interval and $n>0$.
We say $f^nJ$ is a {\it free segment} (resp. {\it bound segment}) if it
 is the union of elements of 
$\tilde{\mathcal P}_n$,
and for any $\omega\in\tilde{\mathcal P}_n|J$,
$f^n\omega$ is free (resp. {\it bound}).
A free segment $f^nJ$ is {\it maximal} if it there is no interval $I\subset\hat X$ 
containing $J$ such that $f^nI$ is a free segment.

%If $f^nJ$ is a bound segment, then
%there exists a unique integer $0\leq i<n$ such that $f^iJ$ is a free segment, and 
%$f^jJ$ is a bound segment
%for every $i<j\leq n$.
%We call $i$ the {\it time of creation} of $f^nJ$.

\begin{lemma}\label{bddist}
If $n\geq N$ and $f^nJ$ is a maximal free segment not containing $\{\hat x,-\hat x\}$, then there is an interval $T\supset J$ such that
$f^n|T$ is strictly monotone and $f^nT$ contains a $e^{-6\varepsilon n}$-scaled neighborhood of $f^nJ$.
In particular, $f^n|J$ has distortion bounded by $e^{13\varepsilon n}$.
\end{lemma}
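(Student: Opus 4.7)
The strategy is to take $T$ as the maximal interval containing $J$ on which $f^n$ is strictly monotone, and then invoke the Koebe Principle (Lemma~\ref{koebe}) with scaling $\xi=e^{-6\varepsilon n}$; this yields a distortion bound $((1+\xi)/\xi)^2\leq e^{13\varepsilon n}$ once $n\geq N$ is large compared to $1/\varepsilon$. Constructed this way, each endpoint of $T$ is either $\pm\hat x$ or a preimage of the critical point of some order $k\leq n-1$, so that the corresponding endpoint of $f^nT$ is the critical value $f^{n-k}0$; the hypothesis that $f^nJ$ avoids $\pm\hat x$ keeps the picture symmetric on both sides, so the substantive task is to estimate each component of $f^n(T\setminus J)$ from below by $e^{-6\varepsilon n}|f^nJ|$.

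Let $\omega\in\tilde{\mathcal P}_n$ be the element adjacent to $J$ on the right (the left side is symmetric). Because $f^nJ$ is a \emph{maximal} free segment, $f^n\omega$ is bound, so the definition of bound state furnishes $k\leq n$ and $(p,j)$ with $\omega\in\tilde{\mathcal P}_k$, $f^k\omega\subset I_{p,j}$, $p_k(\omega)=p$ and $n<k+p$. Since the partition elements are by design intervals of monotonicity for the relevant iterate, $\omega\subset T$ and $f^n\omega\subset f^nT\setminus f^nJ$, so it suffices to show $|f^n\omega|\geq e^{-6\varepsilon n}|f^nJ|$.

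For this lower bound I plan to follow the binding-recovery argument already exploited in the proof of Lemma~\ref{miwa}: the quadratic form of $f$ near $0$ with $f^k\omega\subset I_{p,j}$ gives $|f^{k+1}\omega|\asymp\delta_p|I_{p,j}|\asymp\delta_p^2 e^{-3\varepsilon p}$; bounded distortion of $f^{n-k-1}$ on $f(f^k\omega)$ (cf.\ \cite[Lemma 2.1]{ChuTak}) gives $|f^n\omega|\asymp |Df^{n-k-1}(f0)|\cdot|f^{k+1}\omega|$; and the definition \eqref{Theta} of $\delta_p$ lets one absorb the derivative factor against the sum in the denominator of $\delta_p^2$. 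Combined with the trivial bound $|f^nJ|\leq 2$ and an estimate $p\leq(3/\lambda)\varepsilon n$ in the spirit of \eqref{inessential}, the cumulative geometric loss is at most $e^{-6\varepsilon n}$.

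The main obstacle is precisely this last bookkeeping: controlling the $\varepsilon$-losses from the $I_{p,j}$ subdivision, the definition of $\delta_p$, the possibly mid-period value $n-k\in(0,p)$ (rather than the end of the bound period, which is where the cleaner estimate $|f^{k+p}\omega|\geq e^{-5\varepsilon p}$ of \cite[Lemma 2.6(a)]{ChuTak} naturally lives), and the comparison to $|f^nJ|$, so that their product does not exceed $e^{-6\varepsilon n}$. Once this is in place, the Koebe conclusion and the explicit distortion bound $e^{13\varepsilon n}$ follow without further difficulty.
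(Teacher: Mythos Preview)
Your overall framework---find the adjacent bound element $\omega$, bound $|f^n\omega|$ from below, then invoke Koebe---is the paper's too. The gap is in the middle step.

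Your bound $p\leq(3/\lambda)\varepsilon n$ is false in general. In the paper's notation $k$ is the last free time of $\omega$ and $p=p_k(\omega)$ its bound period; one can perfectly well have $k$ small and $p$ close to $n$ (e.g.\ $k+p=n+1$ with $k=0$ gives $p=n+1$). The inequality \eqref{inessential} you invoke is not a general fact about bound periods: it depends on the inductive slow-recurrence hypothesis built into the proof of Lemma~\ref{miwa}, which is unavailable here. And with $p$ of order $n$, your mid-period ``absorption'' fails: the sum in the denominator of $\delta_p^2$ is dominated by its last term $\sim |Df^{p-1}(f0)|/|f^p0|$, not by the $(n-k-1)$-th term, so $|Df^{n-k-1}(f0)|/\Sigma_p$ is exponentially small in $k+p-n$, not merely $e^{-O(\varepsilon p)}$.

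The paper's observation is that the correct small quantity is not $p$ but the \emph{overshoot} $k+p-n$, and this is always bounded by $N$. Indeed, either $k+p=n+1$ (one-step overshoot), or else $p=N$; the latter case arises precisely because adjacency to the free segment $J$ forces $f^k\omega$ to sit at the edge of $(-\delta,\delta)$, where the minimum bound period $N$ applies (adjacent $I_{p,j}$'s have bound periods differing by at most one, so if both $\omega$ and its neighbour in $J$ entered the critical region at the same time $k$ one lands in the first case). With $k+p-n\leq N$, one simply uses the clean end-of-period estimate $|f^{k+p}\omega|\geq e^{-5\varepsilon p}$ from \cite[Lemma~2.6(a)]{ChuTak} (the one you yourself flagged as living at the end of the bound period) and backtracks at most $N$ steps via $|Df|\leq 4$:
\[
|f^n\omega|\ \geq\ 4^{-(k+p-n)}\,|f^{k+p}\omega|\ \geq\ 4^{-N}e^{-5\varepsilon p}\ \geq\ e^{-6\varepsilon n}.
\]
So the ``main obstacle'' you identified is resolved not by sharpening the mid-period estimate, but by recognising that one never needs it.
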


\begin{proof}
By the assumption, to each side of $J$ is attached an interval $\omega\in\tilde{\mathcal P}_n$
such that $f^n\omega$ is bound.
Let $k$ denote the maximal $i<n$ such that $f^i\omega$ is free
and set $p=p_i(\omega)$. Then
$k<n<k+p$.
If $k+p=n+1$, then
using $|Df|\leq 4$ and
 \cite[Lemma 2.6(a)]{ChuTak} we have
   $|f^n\omega|\geq(1/4)|f^{n+1}\omega|=(1/4)|f^{k+p}\omega|
  \geq(1/4)e^{-5\varepsilon p}\geq e^{-6\varepsilon n}$.
  If $k+p>n+1$, then $p=N$ and so
  $|f^n\omega|\geq(1/4)^{k+p-n}|f^{k+p}\omega|\geq(1/4)^N|f^{k+p}\omega|
  \geq(1/4)^Ne^{-5\varepsilon N}\geq e^{-6\varepsilon n}$.
\end{proof}

\subsection{Construction of finite partitions}
The partition $\tilde{\mathcal P}_n$ restricted to $\{R>n\}$
is actually too fine to be used for an upper estimate of the Hausdorff dimension. 
Hence we construct a finite partition $\mathcal P_n$ by gluing some elements of 
$\tilde{\mathcal P}_n$. 

 Start
with $\mathcal P_0=\{\Lambda^-,\Lambda^+\}$. Assume inductively
that $\mathcal P_{n-1}$ has been constructed with the following
properties:

\begin{itemize}

\item[(P1$)_{n-1}$] it is a partition of the set 
$\{R>n-1\}$  into a finite number of intervals each of which
is the union of a countable number of elements of $\tilde{\mathcal P}_{n-1}|\{R>n-1\}$;

\item[(P2$)_{n-1}$] for any $\omega\in\mathcal P_{n-1}$ let
  $$
{\rm fr}(\omega)=\bigcup\{\omega'\in
\tilde{\mathcal P}_{n-1}|\omega\colon f^{n}\omega'\text{ is free}\}\ \ \text{and}\ \
{\rm bo}(\omega)=\bigcup\{\omega'\in
\tilde{\mathcal P}_{n-1}|\omega\colon f^{n}\omega'\text{ is bound}\}.$$
These two sets are intervals unless empty.
In addition, ${\rm fr}(\omega)$ is the union of at most 
$e^{3\varepsilon (n-1)}$ number of elements
of $\tilde{\mathcal P}_{n-1}$.
%Call $\omega=\bar\omega\cup\bar{\bar{\omega}}$, admits a {\it free/bound decomposition} into two mutually disjoint intervals
\end{itemize}

Let $\omega\in \mathcal P_{n-1}$ and write $\{R=n\}=\{R>n-1\}
\setminus\{R>n\}$. The partition $\mathcal P_{n}$ 
on $\omega\setminus\{R=n\}$
is defined as follows.
Let ${\rm bo}(\omega)\in\mathcal P_{n}$ unless empty. If 
${\rm fr}(\omega)\neq\emptyset$ then there
are two cases:

\begin{itemize}
\item if ${\rm fr}(\omega)\cap\{R=n\}=\emptyset$, then define
$\mathcal P_n|{\rm fr}(\omega)$ by dividing $f^{n}{\rm fr}(\omega)$ into at
most two intervals, one which is at the right of $0$ and the
other at the left of $0$;

\item if ${\rm fr}(\omega)\cap\{R=n\}\neq\emptyset$, then
${\rm fr}(\omega)\setminus\{R=n\}$ consists of at most three
intervals ${\omega}^-$, $\omega^+$, $\omega^0$, 
where the corresponding $f^n$-images are: at the left of
$\Lambda^-$; at the right of
$\Lambda^+$; in between
$\Lambda^-$ and $\Lambda^+$. Let $\omega^\pm\in\mathcal
P_{n}$ unless empty.
 Finally define $\mathcal P_n|\omega^0$ by dividing
$f^n\omega^0$ into at most two intervals, one which
is at the right of $0$ and the other at the left of $0$.
\end{itemize}

This completes the definition of $\mathcal P_{n}$. (P1$)_n$ holds
 by construction. 
To see (P2$)_n$, let $\omega\in\mathcal P_n$.
The subdivision algorithm described in Sect.\ref{subdivide}
and the ``monotonicity" of the bound periods
with respect to the distance to the critical point imply that 
${\rm fr}(\omega)$, ${\rm bo}(\omega)$ are intervals or empty sets.
By construction, ${\rm fr}(\omega)\in\tilde{\mathcal P}_{n}$, or else
it is made up of elements of $\tilde{\mathcal P}_{n}$
with the same latest bound period at the same time $k$, $k<n+1$.
Hence
${\rm fr}(\omega)$ is the union of at most 
$e^{3\varepsilon n}$ elements
of $\tilde{\mathcal P}_{n}$.

\subsection{Abundance of long free segments} 
The next lemma allows us to find long free segments in generic partition elements.

\begin{lemma}\label{Marsub}
There exists $k_1=k_1(\delta)>0$ such that
if $k\geq k_1$ and $\omega\in\mathcal
P_k$, then there exist  $q\in[k+1, \left(1+3\varepsilon/\lambda \right)k]$ and 
$\omega'\in\tilde {\mathcal P}_q|\omega$ such that:

\begin{itemize}

\item[(a)] $f^{q}\omega'$ is free;

\item[(b)] $|\omega'|\geq e^{-18\varepsilon k}|\omega|$;

\item[(c)] $\omega'\subset\{R>q-1\}$.

\end{itemize}
\end{lemma}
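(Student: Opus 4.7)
The plan is to advance $\omega$ in time until a substantial sub-interval becomes free without having returned, and designate that sub-interval as $\omega'$. If $f^{k+1}\omega$ is free, set $q=k+1$ and take $\omega'\in\tilde{\mathcal{P}}_{k+1}|\omega$ to be a surviving component produced by the construction of $\mathcal{P}_{k+1}|\omega$; the size estimate follows from \eqref{bounddist} together with the $e^{3\varepsilon k}$ bound on the number of $\tilde{\mathcal{P}}_k$-elements making up $\omega$ guaranteed by (P2), while $\omega'\subset\{R>k\}$ is automatic.

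The main case is when $f^{k+1}\omega$ is bound. Since no subdivision can occur during a bound phase, every element of $\tilde{\mathcal{P}}_k|\omega$ traces back to a single common subdivision event: there exist a time $t\leq k$ and an ancestor $\omega^*\in\tilde{\mathcal{P}}_{t-1}$ with $f^t\omega^*$ free, such that $\tilde{\mathcal{P}}_k|\omega=\{\omega_k^{(i)}\}$ with $f^t\omega_k^{(i)}=I_{p_i,j_i}$ and $p_i>k-t$. Take $\omega'$ to be the element realizing the smallest bound period $p:=\min_ip_i$ (which is also the largest sub-piece of $\omega$, since $|I_{p,j}|$ decreases in $p$), and set $q=t+p$. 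Then $\omega'\in\tilde{\mathcal{P}}_q|\omega$ (no subdivision during bound), $f^q\omega'$ is free by the definition of bound period, and $\omega'\subset\{R>q-1\}$ because $\omega\subset\{R>k\}$ and $f^n\omega'$ is bound for $k+1\leq n\leq q-1$, precluding any return to $3\Lambda^\pm$.

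To verify $q\in[k+1,(1+3\varepsilon/\lambda)k]$ we split into cases. If $p=k-t+1$, then $q=k+1$ is trivially in the range. Otherwise $p=p_{\min}^{(\omega^*)}:=\min\{p':I_{p',j}\subset f^t\omega^*\text{ for some }j\}$, and we claim $p_{\min}^{(\omega^*)}\leq(3\varepsilon/\lambda)t$. The endpoint $y$ of $\omega^*$ realizing $p_{\min}^{(\omega^*)}$ was inserted at an earlier subdivision time $s\leq t-1$ with $f^sy=x_{p',j'}$ being a partition point, so $f^ty=f^{t-s}x_{p',j'}$ with $t-s\geq 1$. The slow recurrence of $x_{p',j'}$ from Lemma \ref{miwa}, together with the elementary fact $|f^mx_{p',j'}|\geq 1-a\delta^2$ for $1\leq m<\varepsilon^{-1}$, yields $|f^{t-s}x_{p',j'}|\geq\delta_Ne^{-\varepsilon t}$; Lemma \ref{P}(b) then gives $p_{\min}^{(\omega^*)}\leq(2/\lambda)(-\log\delta_N+\varepsilon t)\leq(3\varepsilon/\lambda)t\leq(3\varepsilon/\lambda)k$ for $k\geq k_1$ large. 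Hence $q=t+p\leq(1+3\varepsilon/\lambda)k$. For (b), bounded distortion \eqref{bounddist} on $\omega^*$ at time $t$ gives $|\omega'|/|\omega|\geq C_0^{-1}|I_{p,j}|/|f^t\omega|$, and combined with the $\leq e^{3\varepsilon t}$ bound on the number of sub-pieces and the fact that $\omega'$ is the largest, this yields $|\omega'|/|\omega|\geq C_0^{-1}e^{-3\varepsilon t}\geq e^{-18\varepsilon k}$.

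The hardest step is the distance estimate $|f^ty|\geq\delta_Ne^{-\varepsilon t}$ for boundary points of $\omega^*$, which inherits the slow recurrence of the partition points $x_{p,j}$ constructed in Section \ref{slow}. Once this estimate is in hand, the remaining size and range verifications reduce to standard distortion accounting.
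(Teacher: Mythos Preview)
Your argument follows the same broad plan as the paper (split into free/bound, and in the bound case pick the sub-piece with the smallest bound period), but there is a genuine structural gap in the bound case.

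The claim that ``every element of $\tilde{\mathcal P}_k|\omega$ traces back to a single common subdivision event,'' i.e.\ that $\omega$ lies inside a single element $\omega^*\in\tilde{\mathcal P}_{t-1}$, is unjustified. Recall that $\mathcal P_k$ is strictly coarser than $\tilde{\mathcal P}_k$; the boundary points of an element $\omega\in\mathcal P_k$ include preimages of $0$ and of $\pm\delta$ that were inserted by the $\mathcal P$-construction at various intermediate times, and these need not be $\tilde{\mathcal P}_{t-1}$ partition points. Consequently $\omega$ (or ${\rm bo}(\omega)$) may straddle several $\tilde{\mathcal P}_{t-1}$ elements. This undermines three of your steps at once: (i) the appeal to \eqref{bounddist}, which only controls distortion on a single $\tilde{\mathcal P}_{t-1}$ element; (ii) the dichotomy ``either $p=k-t+1$ or $p=p_{\min}^{(\omega^*)}$,'' since the minimum bound period over $\tilde{\mathcal P}_k|\omega$ need not coincide with the minimum over a full $\tilde{\mathcal P}$-ancestor; and (iii) the ``$\le e^{3\varepsilon t}$ sub-pieces'' count, for which no version of (P2) is available (that bound is stated only for ${\rm fr}(\cdot)$, not for subdivisions of an arbitrary free element into $I_{p,j}$-pieces).

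The paper circumvents all of this by working with the $\mathcal P$-hierarchy rather than a putative $\tilde{\mathcal P}$-ancestor: it sets $i$ to be the maximal time at which $f^i{\rm bo}(\omega)$ is a free segment, and controls distortion across the whole of ${\rm bo}(\omega)$ via the Koebe-based Lemma~\ref{bddist} (applied to the maximal free segment containing $f^i{\rm bo}(\omega)$), which produces the weaker constant $e^{13\varepsilon i}$ and explains the exponent $18\varepsilon k$ in (b). For the range of $q$, the paper's Sublemma~\ref{qr} explicitly separates the case $\omega_i={\rm bo}(\omega)$ (where your slow-recurrence argument for boundary points does apply) from the case $\omega_i\supsetneq{\rm bo}(\omega)$, in which the conclusion is instead $r=k+2-i$ or $r=N$; your write-up does not handle this second case. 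A minor additional point: your dichotomy ``$f^{k+1}\omega$ free'' versus ``$f^{k+1}\omega$ bound'' omits the mixed case; the paper handles this by comparing $|{\rm fr}(\omega)|$ with $|{\rm bo}(\omega)|$ and working with whichever carries at least half the measure.
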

\begin{proof}
We first consider the case $|{\rm fr}(\omega)|\geq(1/2)|\omega|$.
By (P2$)_k$, ${\rm fr}(\omega)$ is the union of at most $e^{3\varepsilon k}$ number 
of elements of $\tilde{\mathcal P}_k$.
Hence it is possible to choose $\omega'\in\tilde{\mathcal P}_k|{\rm fr}(\omega)$
such that $|\omega'|\geq (1/2)e^{-3\varepsilon k}|\omega|$.
Set $q=k+1$. Then (a) (b) hold. (c) is because $\omega\subset \{R>k\}=\{R> q-1\}$.

%If $\{R=q\}\cap\bar\omega\neq\emptyset$,
%then choose $\omega'\in\mathcal Q|\bar\omega$ 
%such that $R(\omega')=q$.
%By the bounded distortion,
%$$\frac{|\omega'|}{|\omega|}\geq\frac{1}{2}
%\frac{|\omega'|}{|\bar\omega|}\geq e^{-\varepsilon k}\frac{|f^{q}\omega'|}{|f^{q}\bar\omega|}\geq
%e^{-\varepsilon k} \frac{|\Lambda^+|}{|X|}\geq.$$

%If $\{R=q\}\cap\bar\omega=\emptyset$, then
%either $0\notin f^{q}\bar\omega$
% and $\mathcal P_{q}|\bar\omega=\{\bar\omega\}$, or 
% $0\in f^{q}\bar\omega$ and
% $\mathcal P_{q}|\bar\omega=\{\omega_{+1},\omega_{-1}\}$, where
% $f^{q}\omega_{+1}$ (resp.  $f^{q}\omega_{-1}$) is at the right (resp. left) 
% of the critical point $0$.
%Define
%$$\omega'=\begin{cases}
%\bar\omega\ \text{if $0\notin f^{q}\bar\omega$;}\\
%\omega_{\sigma}\ \text{if $0\in f^{q}\bar\omega$, where $\sigma\in\{+1,-1\}$ and $|\omega_{\sigma}|\geq|\omega_{-\sigma}|$}
%\end{cases}$$
%Then $\omega'\in\tilde{\mathcal P}_k|\omega$, $f^k\omega'$ is free
%and $|\omega'|\geq(1/2)|\bar\omega|\geq(1/4)|\omega|\geq e^{-5\varepsilon k}|\omega|$.

 We now consider the case $|{\rm bo}(\omega)|\geq(1/2)|\omega|$.
 In this case we shall choose $\omega'$ to be a certain subinterval of ${\rm bo}(\omega)$.
%The point is that the waiting time needed for some parts
%of  the iterates of 
%$f^{k+1}{\rm bo}(\omega)$ to become free is of order $\leq\varepsilon k$.
% Let $q>k+1$ denote the minimal such that ${\rm bo}(\omega)\notin\mathcal P_{q}$.
Let $i$ denote the maximal $j\leq k$ such that 
$f^{j}{\rm bo}(\omega)$ is a free segment.
Let $r$ denote the minimum of the bound period $p_i\colon 
\tilde{\mathcal P}_i|{\rm bo}(\omega)\to\mathbb N$ at time $i$.  
Set 
$q=i+r$.

\begin{sublemma}\label{qr}
$r\leq k$ and $k+1<q\leq(1+3/\lambda)k$.
\end{sublemma}
\begin{proof}
Since $f^{k+1}{\rm bo}(\omega)$ is a bound segment,
$k+1<q$. To show the rest,
for $i-1\leq j\leq k+1$
let $\omega_j$ denote the element of $\mathcal P_j$ containing ${\rm bo}(\omega)$.
We have $\omega_{i-1}\supset\omega_i\supset\cdots\supset\omega_k\supset\omega_{k+1}$,
$\omega_k=\omega$ and $\omega_{k+1}={\rm bo}(\omega)$.
Note that 
$\tilde{\mathcal P}_j|\omega_j=\{\omega'\in\tilde{\mathcal P}_i|\omega_i\colon p_i(\omega')>j-i\}.$
We treat two cases separately.

\noindent{\it Case I: $\omega_i={\rm bo}(\omega)$.}
Since $f^i{\rm bo}(\omega)$ is a free segment,
$\omega_i\subset{\rm fr}(\omega_{i-1})$.
Since ${\rm fr}(\omega_{i-1})$ is the union of elements of $\tilde{\mathcal P}_{i-1}$,
for any point $x$ in the boundary of ${\rm fr}(\omega_{i-1})$  we have
  $|f^ix|\geq\delta e^{-\varepsilon i}$. In other words, $f^i{\rm fr}(\omega_{i-1})$ is not contained in 
  $(-\delta e^{-\varepsilon i},\delta e^{-\varepsilon i})$, and
the same holds for  $f^i\omega_i$.
By Lemma \ref{P}(b) we have $r\leq
(3\varepsilon/\lambda) i<k$,
 and so
$q\leq k+ r\leq\left(1+3\varepsilon/\lambda
\right)k$ for sufficiently large $k$.

\noindent{\it Case II: $\omega_i\supsetneq{\rm bo}(\omega)$.}
Let $r'$ denote the mimimum of the bound period
$p_i\colon 
\tilde{\mathcal P}_i|\omega_i\to\mathbb N$ at time $i$.
If $i+r'\leq k+1$, then the monotonicity of the bound period implies
$\omega_i=\cdots=\omega_{i+r'-1}\supsetneq\omega_{i+r'}\supsetneq\cdots\supsetneq\omega_{k+1}$,
and $\omega_{k+1}\notin\mathcal P_{k+2}$.
This implies $r=k+2-i$, and so $q=k+2\leq(1+3\varepsilon/\lambda)k$.
If $i+r'>k+1$, then ${\rm fr}(\omega)\cap \omega_i=\emptyset$,
and thus all ${\rm fr}(\omega)$, ${\rm bo}(\omega)$, $\omega_i$ share exactly one boundary point.
Since $f^i$ sends $\omega$ diffeomorphically onto its image,
$f^i{\rm bo}(\omega)$ must come close to the boundary of $(-\delta,\delta)$ so that
$r=N$. Hence 
$q\leq(1+3\varepsilon/\lambda)k$. \end{proof}

Choose $\omega'\in \tilde{\mathcal P}_i|{\rm bo}(\omega)$ 
such that $r=p_i(\omega')$. Then
$f^{q}\omega'$ is free, and $\omega'\subset\{R>q-1\}$.
As for (b),
since $f^i\omega'$ contains
some $ I_{r,j}$ we have  $|f^i\omega'|\geq(\delta_{r-1}-\delta_{r})e^{-3\varepsilon r}$.
Since $|f^i{\rm bo}(\omega)|\leq
\delta_{r-1}$
and
$\delta_{r}\leq
e^{-\frac{\varepsilon}{2}}\delta_{r-1}$,
$$\frac{|f^i\omega'|}{|f^i{\rm bo}(\omega)|}\geq 
\frac{(\delta_{r-1}-\delta_{r})e^{-3\varepsilon r}}
{\delta_{r-1}}\geq(1-e^{-\frac{\varepsilon}{2}})
e^{-3\varepsilon r}.$$ 
Suppose that ${\rm bo}(\omega)$ is contained in an interval
which does not contain $\{\pm\hat x\}$ and whose $f^i$-image
is a maximal free segment.
By Lemma \ref{bddist} and $r\leq k$ in Sublemma \ref{qr},
$$|\omega'| \geq 
e^{-13\varepsilon i}(1-e^{-\frac{\varepsilon}{2}})e^{-3\varepsilon r}|{\rm bo}(\omega)|\geq 
e^{-17\varepsilon k}|{\rm bo}(\omega)|\geq
(1/2)e^{-17\varepsilon k}|\omega|\geq
e^{-18\varepsilon k}|\omega|
.$$
Even if the above is not the case,
the proof of Lemma \ref{bddist} implies essentially the same distortion bounds,
and so the same lower estimate of $|\omega'|$ holds.
\end{proof}

\subsection{Special property of the partition}\label{disti}

 The next lemma asserts that a positive definite fraction of points in each element of
$\mathcal P_k$ quickly return to the base $\Lambda$.
\begin{lemma}\label{Mar}
There exists $k_2\geq \max\{k_0,k_1\}$ such that
if $k\geq k_2$ and $\omega\in\mathcal
P_k$, then there exists $\tilde\omega\in\mathcal Q$ such that:

\begin{itemize}

\item[(a)] $\tilde\omega\subset\omega$ and $|\tilde\omega|\geq e^{-\sqrt{\varepsilon} k}|\omega|$;

\item[(b)] $k< R(\tilde\omega)\leq(1+19\varepsilon/\lambda)k$.
\end{itemize}
\end{lemma}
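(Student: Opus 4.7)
The plan is to chain together Lemma \ref{Marsub} and Lemma \ref{escape}, and then pigeonhole over combinatorial types to extract a single large $\mathcal Q$-element. First, apply Lemma \ref{Marsub} to $\omega$ to obtain $q\in[k+1,(1+3\varepsilon/\lambda)k]$ and $\omega'\in\tilde{\mathcal P}_q|\omega$ with $|\omega'|\geq e^{-18\varepsilon k}|\omega|$, $f^q\omega'$ free, and $\omega'\subset\{R>q-1\}$. If $f^q\omega'\supset 3\Lambda^+$ (or $3\Lambda^-$), then $\tilde\omega:=\omega'\cap f^{-q}\Lambda^+$ already lies in $\mathcal Q$ with $R(\tilde\omega)=q>k$, and the bounded distortion estimate of Lemma \ref{bddist} gives $|\tilde\omega|/|\omega'|\geq(|\Lambda^+|/2)e^{-13\varepsilon q}$, which dominates $e^{-\sqrt\varepsilon k}|\omega|/|\omega'|$ once $k$ is large. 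So from now on assume $\omega'\subset\{R>q\}$.

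Set $l=\lceil(16\varepsilon/\lambda)q\rceil$ and apply Lemma \ref{escape} to $\omega'$ in place of $\omega$ and $q$ in place of $k$. For $k$ large this yields
\begin{equation*}
|\{R>q+l\}\cap\omega'|\leq C_1\zeta^l|\omega'|\leq\tfrac{1}{2}|\omega'|,
\end{equation*}
while the choice of $l$ forces $q+l\leq(1+3\varepsilon/\lambda)(1+16\varepsilon/\lambda)k\leq(1+19\varepsilon/\lambda)k$ for $\varepsilon$ small. Hence the $\mathcal Q$-elements $\tilde\omega\subset\omega'$ with $R(\tilde\omega)\leq q+l$ have combined measure at least $(1/2)|\omega'|$, and each automatically satisfies the upper time bound in (b).

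Finally, re-run the itinerary bookkeeping of the proof of Lemma \ref{escape}, but now tracking the \emph{number} of $\mathcal Q$-elements rather than their total measure. Splitting into the collections $\mathcal Q'$ (those visiting the critical region in the window $[q,q+l]$) and $\mathcal Q''$ (those that do not), the itinerary count from \cite[Lemma 2.8]{ChuTak} bounds $\#\mathcal Q'$ by roughly $l\,e^{5\varepsilon l}$, while the Stirling estimate used for $\mathcal Q''$ gives $\#\mathcal Q''\leq e^{\lambda l/100}$; using $l\leq(17\varepsilon/\lambda)k$ the total is at most $e^{c\varepsilon k}$ for a uniform constant $c$. A pigeonhole over this collection then produces some $\tilde\omega$ with
\begin{equation*}
|\tilde\omega|\geq\tfrac{1}{2}e^{-c\varepsilon k}|\omega'|\geq\tfrac{1}{2}e^{-(c+18)\varepsilon k}|\omega|\geq e^{-\sqrt\varepsilon k}|\omega|
\end{equation*}
for $\varepsilon$ small, establishing (a). The main obstacle is exactly this last cardinality count: Lemma \ref{escape} hands us the measure bound on non-returning points for free, but converting ``at least half of $\omega'$ returns" into ``one individual $\mathcal Q$-element is large" forces a combinatorial re-running of that proof in addition to the measure-theoretic one, with care about boundary effects near $\pm\hat x$.
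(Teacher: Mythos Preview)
Your overall strategy matches the paper's exactly: apply Lemma~\ref{Marsub} to obtain $q$ and $\omega'$, use Lemma~\ref{escape} to see that at least half of $\omega'$ returns by time $q+(16\varepsilon/\lambda)k$, bound the number of returning $\mathcal Q$-elements, and pigeonhole. The divergence is entirely in the cardinality step, where you propose to re-run the itinerary bookkeeping of Lemma~\ref{escape}.

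That step has a genuine gap. In the proof of Lemma~\ref{escape} the Stirling bound $e^{\lambda l/100}$ controls only $\mathcal Q''_{\le\theta l}$ (escaping itineraries of length $\le\theta l$); for $\mathcal Q''_{>\theta l}$ the argument is a \emph{measure} estimate via the nested sets $E_q$, not a cardinality bound. So your claim $\#\mathcal Q''\le e^{\lambda l/100}$ is unsupported. In addition, the $\mathcal Q'/\mathcal Q''$ decomposition there classifies elements with $R>k+l$, not the returning elements you need; and an itinerary of critical visits determines only the ambient $\tilde{\mathcal P}$-element $\omega_{n_s}$, which may still contain several $\mathcal Q$-elements, so the count from \cite[Lemma 2.8]{ChuTak} does not translate directly to a bound on $\#\mathcal Q'$ either. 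The scheme is salvageable (a crude $3^l$-type bound on all escaping histories, combined with $l\le(17\varepsilon/\lambda)k$, would do), but it is unnecessary work.

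The paper's count is one line and bypasses all of this: since $f$ has a single critical point and $f^q\omega'$ is an interval, $f^r|_{f^q\omega'}$ has at most $2^r$ monotone branches, so
\[
\#\{\tilde\omega\in\mathcal Q|\omega'\colon R(\tilde\omega)=q+r\}\le 2^{r+1}.
\]
Summing over $0\le r\le (16\varepsilon/\lambda)k$ gives at most $e^{(17\varepsilon/\lambda)k}$ returning elements, and pigeonholing against the measure $\tfrac12|\omega'|$ yields $|\tilde\omega|\ge\tfrac12 e^{-(17\varepsilon/\lambda)k}|\omega'|$, which combined with Lemma~\ref{Marsub}(b) gives (a). Your separate treatment of the case $f^q\omega'\supset 3\Lambda^\pm$ is also unnecessary---it is simply the case $r=0$ in this sum.
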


\begin{proof}
Choose $q\in[k+1, \left(1+3\varepsilon/\lambda\right)k]$  and $\omega'\in\tilde{\mathcal P}_q|\omega$ for which the conclusions of
Lemma \ref{Marsub} holds. 
By Lemma \ref{escape}, 
 $$|\omega'\cap\{R<q+(16\varepsilon/\lambda)
k\}|\geq(1-C_1\zeta^{k})|\omega'|\geq (1/2)|\omega'|.$$
  Since the $f^r$-image of $f^q\omega'$ is folded at most $2^r$ times,
    $\#\{\omega\in\mathcal Q|\omega'\colon R(\omega)=q+r\}\leq 2^{r+1}$
  and so
 \begin{align*}
 \#\{\omega\in\mathcal Q|\omega'\colon R(\omega)<q+ (16\varepsilon/\lambda) k\}
 &=\sum_{r=0}^{[(16\varepsilon/\lambda) k]}\#\{\omega\in\mathcal Q|\omega'\colon R(\omega)=
 q+r\}\\&\leq
 \sum_{r=0}^{[(16\varepsilon/\lambda) k]}2^{r+1}\leq e^{(17\varepsilon/\lambda) k},
 \end{align*}
 where the last inequality holds for sufficiently large $k$.
 Then it is possible to choose $r\leq (16\varepsilon/\lambda) k$ and $\tilde\omega\in\mathcal Q|\omega'$ such
that $R(\tilde\omega)=q+r \leq(1+19\varepsilon/\lambda)k$ and
$|\tilde\omega|\geq(1/2)
e^{-(17\varepsilon/\lambda) k}|\omega'|.$ From this and
Lemma \ref{Marsub}(b) we obtain (a).
\end{proof}

\subsection{Towers}
We now translate Lemma \ref{Mar} into the language of towers.
By Lemma \ref{escape}, we may think of $R\colon\mathcal Q\to \mathbb N$ as
a function on a full measure subset of $\Lambda$ in the obvious
way. 
Let
$$\Delta=\{(x,\ell)\colon x\in \Lambda,\ \ \ell=0,1,\ldots,R(x)-1\},$$
which we call a {\it tower}, and define a {\it tower map} $\hat f\colon
\Delta\circlearrowleft$ by
$$\hat f(x,\ell)=\begin{cases}(x,\ell+1)\ \ \text{ if }\ell+1<R(x); &\\
(f^{R(x)} x,0)\ \text{ if }\ell+1=R(x).\end{cases}$$ The point
$(x,\ell)$ is considered to be climbing the tower in the first case, and
falling down from the tower in the second case.
 Define
 $\Delta_\ell=\{(x,\ell)\in\Delta\colon R(x)>\ell\}.$ 
With the canonical identification $\{R>\ell\}\ni x\mapsto(x,\ell)\in\Delta_\ell$ 
we transplant the partition $\mathcal P_\ell$ of $\{R>\ell\}$ to the partition of $\Delta_\ell$ and also
denote it by $\mathcal P_\ell$. Let $\mathcal D=\bigcup_{\ell\geq0}\mathcal
P_\ell$. This  is a partition of $\Delta$ with a Markov property: for any 
$\omega\in\mathcal D$, $\hat f\omega$ is a finite union of elements of $\mathcal D$.
We identify $\Delta_0=\{(x,0)\colon x\in \Lambda\}$ with $\Lambda$ under the action
of the map $\pi\colon \Delta\to\Lambda$ given by $\pi(x,\ell)=x$.

\begin{lemma}\label{towerreturn}
The following holds for sufficiently large $n$: for 
any $A\in\bigvee_{i=0}^{n-1}\hat f^{-i}\mathcal D$ 
with $A\subset \Delta_0$ 
there exist an interval
$\tilde A\subset\Delta_0$ and $t\in
[\left(1-\varepsilon\right)n,\left(1+20\varepsilon/\lambda\right)n]$ such that:

\begin{itemize}

\item[(a)] $\tilde A\subset A$, and $\hat f^{t}\tilde A
=\Lambda^+$ or $=\Lambda^-$;

\item[(b)] $|\tilde A|\geq e^{-2\sqrt{\varepsilon} n}|A|$.

\end{itemize}
\end{lemma}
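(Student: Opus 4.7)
Let $k=R_s\in[0,n-1]$ denote the last return time of $A$ to $\Delta_0$ among the first $n-1$ iterates, where $0=R_0<R_1<R_2<\cdots$ are the successive return times (constant on $A$ by cylindricity), and set $m:=n-1-k$. Let $\omega^*\in\mathcal P_m$ be the $\mathcal D$-element containing $\hat f^{n-1}A$ under the identification $\Delta_m\cong\{R>m\}$, and let $\omega_0,\ldots,\omega_{s-1}\in\mathcal Q$ denote the $\mathcal Q$-elements visited at the successive returns. A first key observation, using that each $\mathcal P_\ell$ is a coarsening of $\tilde{\mathcal P}_\ell$ and each $\mathcal Q$-element is contained in a single $\tilde{\mathcal P}_{\ell}$-element for every $\ell<R(\cdot)$, is that $A=\omega_0\cap f^{-R_1}\omega_1\cap\cdots\cap f^{-R_{s-1}}\omega_{s-1}\cap f^{-k}\omega^*$, so $f^k$ restricts to a diffeomorphism $f^k|_A\colon A\to\omega^*$; its distortion is bounded by an absolute constant $C$ via Lemma~\ref{koebe}, \eqref{bounddist}, and the exponential shrinkage of induced cylinders.

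In the main case $m\geq k_2$, I apply Lemma~\ref{Mar} to $\omega^*$ at parameter $m$ to obtain $\tilde\omega\in\mathcal Q$ with $\tilde\omega\subset\omega^*$, $|\tilde\omega|\geq e^{-\sqrt\varepsilon m}|\omega^*|$, and $m<R(\tilde\omega)\leq(1+19\varepsilon/\lambda)m$. Setting $t:=k+R(\tilde\omega)$ and $\tilde A:=(f^k|_A)^{-1}(\tilde\omega)\subset A$ produces $\hat f^t\tilde A=f^{R(\tilde\omega)}\tilde\omega=\Lambda^\pm$; the lower bound $(1-\varepsilon)n\leq n-1<t$ and the upper bound $t\leq(1+19\varepsilon/\lambda)(n-1)\leq(1+20\varepsilon/\lambda)n$ follow from $k+m=n-1$ and the upper estimate on $R(\tilde\omega)$, while the measure bound $|\tilde A|/|A|\geq C^{-1}|\tilde\omega|/|\omega^*|\geq e^{-2\sqrt\varepsilon n}$ follows from the distortion of $f^k|_A$ for $n$ large.

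The edge case $m<k_2$ forces $k\geq n-1-k_2$ and $|\omega^*|\geq c(k_2)>0$, since $\mathcal P_m$ is then a finite partition with elements of definite size. Here Lemma~\ref{Mar} does not apply directly. Instead I use pigeonhole: with $L:=\lfloor 20\varepsilon n/\lambda\rfloor$, the global tail estimate $|\{R>L\}|\leq C\zeta^L$ (obtained by summing Lemma~\ref{escape}) shows $|\omega^*\cap\{R>L\}|\leq|\omega^*|/2$ for $n$ large, while the number of $\mathcal Q$-elements of return time $\leq L$ is at most $\sum_{r\leq L}2^{r+1}\leq 2^{L+2}$; taking the largest yields $\tilde\omega\subset\omega^*$ with $R(\tilde\omega)\leq L$ and $|\tilde\omega|\geq 2^{-L-3}|\omega^*|\geq e^{-\sqrt\varepsilon n}|\omega^*|$ provided $\varepsilon$ is sufficiently small (using $20\varepsilon\log 2/\lambda\leq\sqrt\varepsilon$), after which the construction of $\tilde A$ and $t$ parallels the main case. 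I expect the principal obstacle to be the identity $f^kA=\omega^*$ and the uniform distortion constant for $f^k|_A$, both of which require careful alignment of the $\mathcal D$-cylinder structure with the induced Markov map on $\mathcal Q$ and the nested $\mathcal P_\ell$-partitions of the tower floors; once those are established, the conclusion is a direct application of Lemma~\ref{Mar} together with the pigeonhole for short climbs.
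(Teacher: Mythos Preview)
Your approach is essentially the paper's: both find the last return time $j$ (your $k$) to $\Delta_0$ among $0,\ldots,n-1$, set $\omega=f^jA\in\mathcal P_m$ with $m=n-1-j$, and in the main case $m\geq k_2$ apply Lemma~\ref{Mar} to $\omega$. Two remarks. First, your identity $A=\omega_0\cap f^{-R_1}\omega_1\cap\cdots$ with $\omega_i\in\mathcal Q$ is not literally correct: since each $\mathcal P_\ell$ is a \emph{coarsening} of $\tilde{\mathcal P}_\ell$, the piece of a $\mathcal P_\ell$-element that falls to $\Delta_0$ at the next step can be a union of several $\mathcal Q$-elements, not a single one. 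The conclusion you need---that $A$ is an interval and $f^k|_A$ is a diffeomorphism onto $\omega^*$ with uniformly bounded distortion---is nevertheless true, and the paper gets it in one line: by the construction of $\mathcal Q$, $f^j|_A$ extends to a diffeomorphism onto $3\Lambda^\pm$, so Koebe gives distortion $\leq 4$. Second, your edge-case argument ($m<k_2$) via the global tail bound and branch counting is correct but heavier than needed. Since $\bigcup_{m<k_2}\mathcal P_m$ is finite, one may simply pre-select for each such $\omega$ a fixed $\tilde\omega\in\mathcal Q$ with $\tilde\omega\subset\omega$; the finitely many return times $R(\tilde\omega)$ and ratios $|\tilde\omega|/|\omega|$ are then absorbed for large $n$. (The paper's own edge case, ``set $t=j$ and take $\tilde A\subset A$ with $f^t\tilde A=\Lambda^\pm$,'' is literally correct only when $m=0$, so some such patch is needed for $1\leq m<k_2$.)
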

\begin{proof}
In the first $n$-iterates under $\hat f$,
the interval $A$ continues climbing the tower, or else falls down from the tower several times.
Let $j=\max\{i\geq0\colon \hat f^iA\subset\Delta_0\}$.
Since $f^jA\subset\Lambda^\pm$ and $f^j|A$ is extended to a diffeomorphism
onto $3\Lambda^\pm$, $f^j|A$ has distortion bounded by $4$.
Set $\omega=f^jA$ and $k=n-j-1$.
Since $A\subset\Delta_0$  we have $\omega\in\mathcal P_{k}$.
If $k\geq k_2$, then
take
a subinterval $\tilde\omega\subset \omega$ for which the conclusions of Lemma
\ref{Mar} hold, and define $t=j+R(\tilde\omega).$ The bounds on $t$ 
follow from Lemma \ref{Mar}(b).
Define $\tilde A$ to be the subinterval of $A$ be such that
$\tilde\omega=f^j\tilde A$. Then
\begin{equation}\label{p11}
\frac{|\tilde A|}{|A|}\geq
\frac{1}{4}\frac{|\tilde\omega|}{|\omega|}\geq
\frac{1}{4}e^{-\sqrt{\varepsilon} k} \geq
e^{-2\sqrt{\varepsilon} n}.\end{equation} 
The second inequality follows from Lemma \ref{Mar}(a).
If $k<k_2$, then set
$t=j$ and
 define $\tilde A$ to be the subinterval of $A$ such that
 $f^t\tilde A=\Lambda^\pm$. 
\end{proof}

\section{Upper estimate of Birkhoff spectrum}\label{spectrum}
We put together the constructions and the results in Sect.2 
to obtain an upper estimate of the Birkhoff spectrum. 
For $\varphi\in C(X)$, $k\geq0$, $\alpha\in[c_\varphi,d_\varphi]$
and $\varepsilon>0$
consider the set
$$\Gamma_{k}
=\Gamma_{k}(\varphi;\alpha , \varepsilon)
=\left\{x\in \Lambda\colon \left|\frac{1}{n}S_n\varphi(x)-
\alpha\right|<\varepsilon\ \ \text{for every } n\geq k\right\}.$$
Note that $\Gamma_k$ is increasing in $k$.
Since $K_\varphi(\alpha)$ is dense in $X$ and $\Lambda$ contains open sets,
$\Gamma_k\neq\emptyset$ holds for sufficiently large $k$.
Define $\sigma=\sigma(\varphi; \alpha ,\varepsilon)$ by
\begin{equation}\label{defsigma}
\sigma={\sup}\left\{\frac{h(\mu)}{\lambda(\mu)}\colon\mu\in\mathcal M_f,\
|\mu(\varphi)-\alpha|\leq \sqrt\varepsilon\right\}+\varepsilon^{\frac{1}{3}}.
\end{equation}
Since $\lambda_{\rm inf}>0$,  $\sigma$ stays bounded from above as $\varepsilon\to 0$.

\begin{prop}\label{upperd}
If $\varphi\in C(X)$ is Lipschitz, then for any $\alpha\in[c_\varphi,d_\varphi]$
and $\varepsilon>0$,
$$\dim_H\Gamma_{k}(\varphi;\alpha,\varepsilon)\leq\sigma(\varphi;\alpha,\varepsilon)
\quad\text{for every $k\ge 0$.}$$
%Let $\varphi\colon X\to\mathbb R$ be Lipschitz continuous 
%and $\alpha\in[c_\varphi,d_\varphi]$.
%For any $\varepsilon>0$ there
%exists $\xi\in\mathcal M_f$ such that 
%\begin{equation*}\label{upperd1}
%\dim_H\Gamma_{k}(\varphi;\alpha,\varepsilon)\leq\sigma(\varepsilon).
%\end{equation*}
\end{prop}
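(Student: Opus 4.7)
The plan is to cover $\Gamma_k$ efficiently using the Markov structure of the towers built in Section 2, bound the sum of $s$-th powers of diameters by a variational quantity involving entropy and Lyapunov exponent, and then let $s\downarrow\sigma$.

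For each large $n$, identify $\Lambda$ with $\Delta_0$ and view $\Gamma_k\subset\Delta_0$. The restrictions to $\Delta_0$ of elements $A\in\bigvee_{i=0}^{n-1}\hat f^{-i}\mathcal D$ that meet $\Gamma_k$ form a cover of $\Gamma_k$. For each such $A$, Lemma \ref{towerreturn} produces $\tilde A\subset A$ and $t(A)\in[(1-\varepsilon)n,(1+20\varepsilon/\lambda)n]$ with $\hat f^{t(A)}\tilde A=\Lambda^\pm$ and $|\tilde A|\ge e^{-2\sqrt\varepsilon n}|A|$, so $\{\tilde A\}$ is a Markov family of cylinders for an induced map with finitely many branches. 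Applying Lemma \ref{bddist} together with Koebe (Lemma \ref{koebe}), $f^{t(A)}|\tilde A$ has distortion bounded by $e^{O(\varepsilon n)}$, so for any $x\in\tilde A$,
\[
|A|\le e^{2\sqrt\varepsilon n}|\tilde A|\le e^{O(n\sqrt\varepsilon)}\exp\Bigl(-\sum_{i=0}^{t(A)-1}\log|Df(f^ix)|\Bigr).
\]
Moreover, since $\varphi$ is Lipschitz and the diameter of $A$ is exponentially small, the defining inequality of $\Gamma_k$ at time $n$ together with $t(A)\asymp n$ and boundedness of $\varphi$ yields $|S_{t(A)}\varphi(x)/t(A)-\alpha|\le\tfrac12\sqrt\varepsilon$ for every $x\in A\cap\Gamma_k$, once $\varepsilon$ is small. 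Pick one such $x_A\in\tilde A\cap\Gamma_k$ per $A$.

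Fix $s>\sigma$. Combining the above,
\[
\sum_A|A|^s\le e^{O(n\sqrt\varepsilon)s}\sum_A\exp\Bigl(-s\sum_{i=0}^{t(A)-1}\log|Df(f^ix_A)|\Bigr).
\]
Form the empirical measure $\nu_n=\bigl(\sum_A t(A)\bigr)^{-1}\sum_A\sum_{i=0}^{t(A)-1}\delta_{f^ix_A}$. Any weak-$\ast$ subsequential limit $\nu$ lies in $\mathcal M_f$; by the Birkhoff control above it satisfies $|\nu(\varphi)-\alpha|\le\sqrt\varepsilon$, hence by the definition of $\sigma$,
\[
h(\nu)/\lambda(\nu)\le\sigma-\varepsilon^{1/3}.
\]
Using that the induced map is uniformly expanding with finitely many branches, a standard counting/pressure argument — essentially that $(1/n)\log\#\{A\}$ converges along the subsequence to $h(\nu)$ and $(1/n)\sum_A\sum_{i<t(A)}\log|Df(f^ix_A)|$ converges to $\lambda(\nu)$ — yields
\[
\limsup_{n\to\infty}\frac1n\log\sum_A\exp\Bigl(-s\sum_{i=0}^{t(A)-1}\log|Df(f^ix_A)|\Bigr)\le h(\nu)-s\lambda(\nu)\le(\sigma-\varepsilon^{1/3}-s)\lambda(\nu)<0.
\]
Consequently $\sum_A|A|^s\to0$ exponentially, so $\mathcal H^s(\Gamma_k)=0$ and $\dim_H\Gamma_k\le s$; letting $s\downarrow\sigma$ proves the proposition.

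The main obstacle lies in the last step: passing from the finite-$n$ exponential sum to a bound involving $h(\nu)-s\lambda(\nu)$ at an invariant limit measure, in a setting where the Lyapunov exponent famously fails lower semicontinuity. The escape route, prepared by Section 2, is that the induced system is uniformly hyperbolic with finitely many branches, so its thermodynamic formalism is classical; and the positive definite lower bound $e^{-\sqrt\varepsilon k}$ in Lemma \ref{Mar}(a), transferred to the intervals $\tilde A$ by Lemma \ref{towerreturn}, ensures all distortion slippages are of size $e^{O(n\sqrt\varepsilon)}$, comfortably beaten by the $\varepsilon^{1/3}$ spectral gap built into the definition of $\sigma$.
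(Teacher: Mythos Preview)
Your outline has the right skeleton—cover $\Gamma_k$ by cylinders, use Lemma~\ref{towerreturn} for distortion control, and compare the weighted sum to $h(\nu)-s\lambda(\nu)$ for some admissible $\nu$—but the heart of the argument, the ``standard counting/pressure'' step, is not standard and does not work as written.

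First, a small but genuine hole: you pick $x_A\in\tilde A\cap\Gamma_k$, yet only $A\cap\Gamma_k\neq\emptyset$ is known; the subinterval $\tilde A$ produced by Lemma~\ref{towerreturn} need not meet $\Gamma_k$. More seriously, your empirical measure
\[
\nu_n=\Bigl(\sum_A t(A)\Bigr)^{-1}\sum_A\sum_{i=0}^{t(A)-1}\delta_{f^ix_A}
\]
carries \emph{uniform} weights over the cylinders $A$. The assertion that $(1/n)\log\#\{A\}\to h(\nu)$ along a subsequence is not a consequence of any standard lemma: it would require the $A$'s to be roughly $(n,\epsilon)$-separated for a fixed $\epsilon$ (they are not—their $f^n$-images can be very short), together with an entropy upper-semicontinuity argument. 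Even granting that, the quantity you need to bound is the \emph{weighted} sum $\sum_A\exp(-s\,S_{t(A)}\log|Df|(x_A))$, whose exponential growth rate is a pressure, not $\#\{A\}$; to compare it to $h(\nu)-s\lambda(\nu)$ for a single $\nu$, that $\nu$ must be built with weights proportional to $|A|^s$, not uniform weights. With uniform weights the limit $\nu$ has no reason to realize the pressure of $-s\log|Df|$ restricted to the constraint set.

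The paper closes this gap by an extra construction you have skipped. It first uses pigeonhole to fix a single return time $t_0$ (losing only a factor polynomial in $n$, absorbed by $e^{-3\sqrt\varepsilon\sigma n}$), and then extends each $\tilde A$ to a cylinder $K(A)$ for which $f^q$ (a \emph{fixed} iterate, $q=t_0+u$) maps onto $\hat X$. The resulting collection $\mathcal K$ generates a genuine horseshoe: $f^q|H_q(\mathcal K)$ is conjugate to a full shift on $\#\mathcal K$ symbols, and—crucially—\emph{every} point of $H_q(\mathcal K)$ satisfies $|(1/q)S_q\varphi-\alpha|\le\sqrt\varepsilon$. On this uniformly hyperbolic subshift the paper builds $\nu_0$ as a limit of periodic-orbit measures with weights $|K_{a_0\cdots a_\ell}|^\sigma$; the variational-principle computation then gives $\log\sum|K_i|^\sigma\le h_F(\nu_0)-\sigma\int\log|DF|\,d\nu_0+O(1)$, and the spread measure $\xi$ automatically satisfies $|\xi(\varphi)-\alpha|\le\sqrt\varepsilon$ because $\nu_0$ lives on $H_q(\mathcal K)$. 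Your final paragraph correctly senses that uniform hyperbolicity is the escape route, but you have not actually passed to a fixed-iterate horseshoe, and without that the thermodynamic step remains a gap.
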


We finish the upper estimate of $B_\varphi(\alpha)$ assuming the conclusion of Proposition \ref{upperd}. Set $Y=[f^20,f0]$. Points in $X\setminus Y$ are mapped to $Y$ 
by some positive iterates. 
The countable stability and the invariance of
Hausdorff dimension under  the action of
Lipschitz continuous homeomorphisms yields
$B_\varphi (\alpha)
= \dim_H  (K_\varphi(\alpha)\cap Y).$
We estimate the right-hand-side.

By (A4) there exists $M>0$ such that
$f^{M}\Lambda=Y$. Then
$$\dim_H (K_\varphi(\alpha)\cap Y)
= \dim_Hf^M(K_\varphi(\alpha)\cap\Lambda )
\leq \dim_H (K_\varphi(\alpha)\cap\Lambda )
\leq\lim_{k\to\infty}\dim_H \Gamma_{k},$$
where the last inequality is because
%$K_\varphi\cap
%\Lambda\subset \bigcap_{n\geq0}\bigcup_{k\geq n}\Gamma_{k}(\alpha).$
$K_\varphi(\alpha)\cap
\Lambda\subset\bigcup_{k\geq n}\Gamma_{k}$ for every $n\geq0$.
If $\varphi$ is Lipschitz continuous, then by Proposition \ref{upperd},
\begin{align*}
B_\varphi(\alpha) \leq \lim_{k\to\infty}\dim_H \Gamma_{k} \leq \sigma.
\end{align*}
 Letting
$\varepsilon\to0$ we get
$$B_\varphi (\alpha) 
\leq \lim_{\varepsilon\to0}{\sup}
\left\{\frac{h(\mu)}{\lambda(\mu)}\colon\mu\in
\mathcal M_f,\ |\mu(\varphi)-\alpha|<\varepsilon\right\} .$$ 

If $\varphi$ is merely continuous, then take
a Lipschitz continuous $\tilde\varphi$ such that
$\|\varphi - \tilde\varphi\| <\varepsilon / 2 ,$ 
$c_{\tilde\varphi} = c_{\varphi}$ and $d_{\tilde\varphi}=d_{\varphi} .$
Then for any
 $\alpha\in[c_{\varphi},d_{\varphi}]$
and small $\varepsilon >0$,
$\Gamma_{k}(\varphi;\alpha , \varepsilon) \subset 
\Gamma_{k}(\tilde\varphi; \alpha , 2\varepsilon)$
holds.
By Proposition \ref{upperd} there exists $\xi\in\mathcal M_f$ 
such that
$$\dim_H \Gamma_{k}(\varphi;\alpha , \varepsilon) 
\le \dim_H \Gamma_{k}( \tilde\varphi; \alpha , 2\varepsilon) \leq 
\frac{h(\xi)}{\lambda(\xi)} + (3\varepsilon)^{\frac{1}{3}},$$
and
$$|\xi (\varphi) - \alpha | \le |\xi(\varphi) - \xi(\tilde\varphi)| +   |\xi
(\tilde\varphi) - \alpha| 
< \varepsilon /2 + \sqrt{2\varepsilon}
< 2\sqrt\varepsilon.$$
The rest of the argument is identical to the previous case.

\medskip
The rest of this section is entirely devoted to the proof of Proposition
\ref{upperd}. In Sect.\ref{conh}
we extract from the towers uniformly hyperbolic invariant sets (horseshoes).
In Sect.\ref{construct2} we construct invariant measures on the horseshoes,
and use them to complete the proof of the proposition.

\subsection{Construction of a horseshoe}\label{conh}
%Let $\varphi\in C(X)$ be Lipschitz continuous, $\alpha\in[c_\varphi,d_\varphi]$
%and $\varepsilon>0$.
 Define
$$\mathcal A_n=\left\{A\in\bigvee_{i=0}^{n-1}\hat f^{-i}\mathcal D\colon
A\subset\Delta_0,\ \
\left|\frac{1}{n}S_n\varphi(x)-\alpha\right|<\varepsilon\
\ \text{for some }x\in A\right\}.$$
If $\Gamma_k \neq\emptyset$ then
for every $n\geq k$ we have $\mathcal A_n\neq\emptyset$, and
$\Gamma_{k}\subset\bigcup_{A\in\mathcal
A_n} A.$ We use this family of coverings for the upper estimate of the Hausdorff dimension.

Let $\Omega$ be a finite collection of pairwise disjoint
closed intervals in $\Lambda$ and $r$ a positive integer.
We say $\Omega$ \emph{generates a horseshoe for} $f^r$ if
$f^r$ sends each element of $\Omega$ diffeomorphically onto 
$\hat X$.
By a {\it horseshoe} we mean the set
$$H_r(\Omega)=\bigcap_{j=0}^\infty(f^{r})^{-j}
\left(\bigcup_{I\in\Omega} I\right).$$

\begin{lemma}\label{horse2}
For any $\varepsilon>0$ there exists $n'>0$ such that if $n\geq n'$ then $\mathcal A_n\neq\emptyset$ and
there exist a finite
 collection $\mathcal K$ of closed intervals in $\Lambda$
 and an integer $q\in[(1-\varepsilon)n,(1+21\varepsilon/\lambda) n]$
 such that:

\begin{itemize}
\item[(a)] $\mathcal K$ generates a horseshoe for $f^{q}$;

\item[(b)] $\sum_{K\in\mathcal K} |K|^{\sigma}\geq
e^{-3\sqrt{\varepsilon}\sigma n}\sum_{A\in\mathcal A_n}
|A|^{\sigma}$;

\item[(c)] for all $x\in H_q(\mathcal K)$, $|(1/q)
S_{q}\varphi(x)-\alpha|\leq\sqrt{\varepsilon}$.
\end{itemize}
%where $\sigma=\sigma(\varphi; \alpha ,\varepsilon)$ 
%is as in (\ref{defsigma}).
\end{lemma}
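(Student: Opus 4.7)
The plan is to apply Lemma~\ref{towerreturn} to each $A\in\mathcal A_n$, synchronize the resulting return times and landing sides by a pigeonhole, then promote the common landing $\Lambda^\pm$ to $\hat X$ via a fixed number of additional iterates supplied by the topological mixing in (A4). Concretely, for each $A\in\mathcal A_n$ Lemma~\ref{towerreturn} yields an interval $\tilde A\subset A$ and an integer $t(A)\in[(1-\varepsilon)n,(1+20\varepsilon/\lambda)n]$ with $\hat f^{t(A)}\tilde A\in\{\Lambda^+,\Lambda^-\}$ and $|\tilde A|\ge e^{-2\sqrt\varepsilon n}|A|$. Since the pair $(t(A),\pm)$ takes $O(n)$ values, a pigeonhole extracts a subcollection $\mathcal A_n'\subset\mathcal A_n$ with common $t$ and common landing---say $\Lambda^+$---such that $\sum_{A\in\mathcal A_n'}|\tilde A|^\sigma\ge(Cn)^{-1}\sum_{A\in\mathcal A_n}|\tilde A|^\sigma$ for a constant $C=C(\varepsilon,\lambda)$.

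Next, by (A4) there is a fixed integer $s=s(f)$ with $f^s\Lambda^+\supset\hat X$; pick a monotonicity branch $I\subset\Lambda^+$ of $f^s|\Lambda^+$ so that $f^s|I$ is a diffeomorphism onto $\hat X$. Setting $q=t+s$ and $K_A=(f^t|\tilde A)^{-1}I$, I obtain pairwise disjoint intervals $K_A\subset\tilde A\subset\Lambda\subset\hat X$ such that $f^q|K_A$ is a diffeomorphism onto $\hat X$; hence $\mathcal K=\{K_A\}$ satisfies (a), and $q\in[(1-\varepsilon)n,(1+21\varepsilon/\lambda)n]$ for $n$ large. Bounded distortion of $f^t$ on $\tilde A$ inherited from \eqref{bounddist} gives $|K_A|\ge c_0|\tilde A|$ for a constant $c_0=c_0(f)>0$ depending only on $|I|/|\Lambda^+|$. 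Chaining the estimates,
\[
\sum_{K\in\mathcal K}|K|^\sigma\ge\frac{c_0^\sigma\,e^{-2\sqrt\varepsilon\sigma n}}{Cn}\sum_{A\in\mathcal A_n}|A|^\sigma\ge e^{-3\sqrt\varepsilon\sigma n}\sum_{A\in\mathcal A_n}|A|^\sigma
\]
for $n$ large, yielding (b).

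For (c), fix $x\in H_q(\mathcal K)$, so $x\in K_A\subset A$ for some $A\in\mathcal A_n'$; choose $y\in A$ with $|n^{-1}S_n\varphi(y)-\alpha|<\varepsilon$ from the definition of $\mathcal A_n$. Standard hyperbolic estimates on $n$-cylinders of the tower give $\sum_{i=0}^{n-1}|f^ix-f^iy|=O(1)$, so Lipschitz continuity of $\varphi$ yields $|S_n\varphi(x)-S_n\varphi(y)|=O(1)$. The residual $q-n=O(\varepsilon n)$ iterates add $O(\varepsilon\|\varphi\|_\infty)$ to the $q$-average, and $n/q\to 1$, so $|q^{-1}S_q\varphi(x)-\alpha|\le\sqrt\varepsilon$ once $\varepsilon$ is small. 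The principal challenge is balancing the three multiplicative losses in (b)---$e^{-2\sqrt\varepsilon\sigma n}$ from Lemma~\ref{towerreturn}, $(Cn)^{-1}$ from the pigeonhole, and the constant $c_0^\sigma$ from pulling $I$ back through $f^t$---against the $e^{-3\sqrt\varepsilon\sigma n}$ budget; this works because $\sigma$ stays bounded (see the comment after \eqref{defsigma}) and both $c_0^\sigma$ and $Cn$ are dwarfed by $e^{\sqrt\varepsilon\sigma n}$ for $n$ large.
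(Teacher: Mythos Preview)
Your proposal is correct and follows essentially the same approach as the paper's proof. The only minor differences are that the paper pigeonholes on the return time $t$ alone (handling both landing sides simultaneously via the symmetric intervals $I^\pm\subset\Lambda^\pm$) rather than on the pair $(t,\pm)$, and where you invoke ``standard hyperbolic estimates on $n$-cylinders of the tower'' for part~(c), the paper isolates this as Sublemma~\ref{bdd} (the bound $\sum_{i=0}^{n-1}|f^i\omega|\le C\delta^{-1}$ for $\omega\in\mathcal P_{n-1}$, deduced from \cite[Sublemma~3.16]{ChuTak}) and proves it separately.
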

\begin{proof}
For each $A\in\mathcal A_n$, fix once and for all
 an interval $\tilde A$ and an integer $t=t_A$ for which the 
conclusions of Lemma \ref{towerreturn} hold. 
Let $\mathcal A_n(t)=\{A\in\mathcal A_n\colon t_A=t\}$.
Then
$t_A\in[(1-\varepsilon)n, (1+19\varepsilon/\lambda)n]$.
Let $t_0$ be a value of $t$ which maximizes
$\sum_{A\in\mathcal A_n(t)}|\tilde A|^{\sigma}$.
Then
\begin{equation}\label{horse3}
\sum_{A\in\mathcal
A_n(t_0)}|\tilde A|^{\sigma}\geq\frac{1}{(1+20/\lambda)\varepsilon
n}\sum_{A\in\mathcal A_n}|\tilde A|^{\sigma}.\end{equation} 
By (A4)
it is possible to choose a constant $\tau>0$, an integer $u>0$ and a closed interval
$I^+\subset\Lambda^+$ such that $\Lambda^+$ contains the $\tau$-scaled neighborhood
of $I^+$, and $f^{u}$ sends $I^+$
diffeomorphically onto $\hat X$. 
 Define $q=t_0+u$. The bounds on $q$ hold
 for sufficiently large $n$.

Let $I^-=-I^+$. 
For each $A\in\mathcal A_n(t_0)$ define $K(A)$ to be the preimage
of $I^+$ or $I^-$ under $f^{t_0}|\tilde{A}$, according to whether
$f^{t_0}\tilde {A}=\Lambda^+$ or $=\Lambda^-$. Set
$\mathcal K=\{K(A)\colon A\in\mathcal A_n(t_0)\}$. Then $\mathcal K$
is a finite collection of pairwise disjoint
closed intervals in $\Lambda$, and $f^{q}$ sends each element of $\mathcal K$ diffeomorphically 
onto $\hat X$. 
Set $c=(\tau/(1+\tau))^2|I^+|/|\Lambda^+|$.
Then 
\begin{align*}\sum_{K\in\mathcal K}|K|^{\sigma}&\geq
c^{\sigma}\sum_{A\in\mathcal A_n(t_0)}
|\tilde{A}|^{\sigma}
\geq
\frac{c^{\sigma}}{(1+20/\lambda)
\varepsilon n}\sum_{A\in\mathcal
A_n}|\tilde A|^{\sigma}\\&\geq\frac{1}{(1+20/\lambda)\varepsilon n}(ce^{-2\sqrt{\varepsilon}
n})^{\sigma}\sum_{A\in\mathcal A_n}|A|^{\sigma}
\geq e^{-3\sqrt{\varepsilon}\sigma n}\sum_{A\in\mathcal A_n}|A|^{\sigma}
.\end{align*}

\begin{sublemma}\label{bdd}
There exists a constant $C>0$ such that
if $n\geq N$ and $\omega\in\mathcal P_{n-1}$, then for all $x,y\in\omega$,
 $$|S_n\varphi(x)-S_n\varphi(y)|\leq {\rm Lip}(\varphi)\cdot C\delta^{-1},$$
 where ${\rm Lip}(\varphi)$ denotes the Lipschitz constant of $\varphi$.
\end{sublemma}
\begin{proof}
Let $0\leq i\leq n-1$. We call $f^i\omega$ free if there exists an interval 
$J\subset\hat X$
containing $\omega$ such that $f^iJ$ is a free segment. 
%Otherwise we call $f^i\omega$ bound.
Let $i_0$ denote the maximal $i\leq n-1$ such that $f^i\omega$ is free.
From the construction in Sect.2 one can find integers $0\leq r_1<\cdots<r_s=i_0$,
$p_1,\ldots,p_s$
such that:
$r_{1}$ is the smallest $i\geq 0$ with $f^i\omega\cap(-\delta,\delta)\neq\emptyset$;
$\delta_{p_k}\leq d(0,f^{r_k}\omega)\leq\delta_{p_k-2}$ and
$r_{k+1}$ is the smallest $i\geq r_k+p_k$ with 
$f^i\omega\cap(-\delta,\delta)\neq\emptyset$
$(k=1,\ldots,s-1)$;
$d(0,f^{r_s}\omega)\leq\delta_{p_s-2}$ and
$n\leq r_s+p_s$.
Then,
similarly to the proof of \cite[Sublemma 3.16]{ChuTak}
one can show that 
$$\sum_{i=0}^{n-1}|f^i\omega|
\leq C\delta^{-1}.$$
This implies the desired inequality since $\varphi$ is Lipschitz continuous.
\end{proof}

To prove (c), for each $A\in\mathcal A_n(t_0)$ pick $x_A\in A$
such that
$|(1/n)S_n\varphi(x_A)-\alpha|\leq \varepsilon$. We have 
$$S_q\varphi(x_A)\geq S_n\varphi(x_A)-
\sup|\varphi|\cdot|q-n|\geq \alpha n-\sup|\varphi|\cdot(21\varepsilon/\lambda) n
\geq
\left(\alpha-\sqrt{\varepsilon}/2\right)q.$$
In the same way we have
$S_q\varphi(x_A)\leq\left(\alpha+\sqrt{\varepsilon}/2\right)q.$
 Then
\begin{equation}\label{on1}\left|S_q\varphi(x_A)-q\alpha\right|\leq
\sqrt{\varepsilon}\cdot q/2.\end{equation} By
Sublemma \ref{bdd}, for any $x\in A$ we have
$|S_{t_0}\varphi(x_A)-S_{t_0}\varphi(x)|\leq {\rm Lip}(\varphi)\cdot
C\delta^{-1}$. 
Since $q-t_0=u$ and 
 $q\geq(1-\varepsilon)n$,
for sufficiently large $n$
we have
\begin{equation}\label{on2}
|S_q\varphi(x_A)-S_q\varphi(x)|\leq {\rm Lip}(\varphi)
\cdot C\delta^{-1}+2\sup|\varphi|\cdot (q-t_0)\leq\sqrt{\varepsilon}\cdot q/2
.\end{equation} 
(\ref{on1}) (\ref{on2}) yield
$|S_{q}\varphi(x)-q\alpha|\leq \sqrt{\varepsilon}\cdot q$.
\end{proof}

\subsection{Construction of a measure on the horseshoe}\label{construct2}
%We construct a measure $\xi$ supported on the horseshoe.
For sufficiently large $n$, 
choose a finite collection $\mathcal K$ of closed intervals in $\Lambda$ 
and a positive integer $q$ for which the conclusions of 
Lemma \ref{horse2} hold. 
Set $F=f^q$.
By construction, $F$ is uniformly expanding on each element of $\mathcal K$.
 Hence, $F|H_q(\mathcal K)$ is topologically conjugate to
the one-sided full shift on $\#\mathcal K$-symbols.
%Define 
%a continuous function $\Phi\colon H_{q}(\mathcal K)\to\mathbb R$ by
%$\Phi(x)=\sigma\log |DF(x)|$.
% Let $\nu_{\Phi}$ denote an equilibrium state of $F$
%for the potential $-\Phi$, namely an $F$-invariant measure such that
%$$h_F(\nu_{\Phi})-\nu_{\Phi}(\Phi)=\sup\left\{h_F(\nu)-\nu(\Phi)
%\colon\text{$\nu$ is $F$-invariant}\right\}.$$ Here, $h_F(\nu)$
%denotes the entropy of $(F,\nu)$.
%Let $\xi=(1/q)\sum_{i=0}^{q-1}(f^i)_*\nu_{\Phi}$, which is
%$f$-invariant. Note that
%$\xi(\varphi) =(1/q)\nu_{\Phi}(S_{q}\varphi)$.
%From Lemma \ref{horse2}(c) it follows
%that $\left|(1/q)S_{q}\varphi-\alpha\right|\leq\sqrt\varepsilon$ 
%$\nu_{\Phi}$-a.e., and thus 
%$\left|\xi(\varphi)-\alpha\right|\leq\sqrt\varepsilon$.
Write $\mathcal K=\{K_1,\ldots,K_{\#\mathcal K}\}$.
For $\ell>0$ and an $(\ell+1)$-string $(a_0,\ldots,a_\ell)$
of integers in $[1,\#\mathcal K]$, define an interval $$K_{a_0\cdots
a_{\ell}}=K_{a_0}\cap F^{-1}K_{a_1}\cap\cdots \cap F^{-\ell}K_{a_\ell}.$$
Set $\kappa=2C_0\sup_{x,y\in I^+}\frac{|Df^u(x)|}{|Df^u(y)|}$. We have
%By the Koebe Principle,
%there exists $\kappa\geq 1$ such that
%for any intervals $J\subset T$ such 
%that $f^n|T$ is strictly monotone, $f^nT=3\Lambda^\pm$ and $f^nJ\subset\Lambda^\pm$,
%$f^n|J$ has distortion bounded by $4$.
%Hence
$$\frac{|K_{a_0\cdots a_{\ell}}|}{|K_{a_0\cdots a_{\ell-1}}|}
\geq\frac{1}{2}\frac{|F^{\ell-1}K_{a_0\cdots a_{\ell}}|}{|F^{\ell-1}K_{a_0\cdots a_{\ell-1}}|}
=\frac{1}{2}\frac{|\{x\in K_{a_{\ell-1}}\colon Fx\in K_{a_{\ell}}\}|}{|K_{a_{\ell-1}}|}
\geq\kappa^{-1}|K_{a_\ell}|.$$
The first inequality follows from the Koebe Principle, and
the second one from \eqref{bounddist} and the definition of $I^+$, $u$.
Then
\begin{align*}
\sum_{(a_0,\ldots, a_\ell)} |K_{a_0\cdots a_\ell}|^{\sigma}&=
\sum_{(a_0,\ldots, a_{\ell-1})}|K_{a_0\cdots
a_{\ell-1}}|^{\sigma}\sum_{a_\ell}\frac{
|K_{a_0\cdots a_{\ell}}|^{\sigma}}
{|K_{a_0\cdots a_{\ell-1}}|^{\sigma}}\\
&\geq \kappa^{-\sigma}\sum_{i=1}^{\#\mathcal K}|K_i|^{\sigma}\sum_{(a_0,\cdots,
a_{\ell-1})}|K_{a_0\cdots a_{\ell-1}}|^{\sigma}\\
&\geq\cdots\geq
\left(\kappa^{-\sigma}\sum_{i=1}^{\#\mathcal K}|K_i|^{\sigma}\right)^{\ell+1}.\end{align*}
This yields
\begin{equation}\label{lem1}\varliminf_{\ell\to\infty}\frac{1}{\ell}
\log\sum_{(a_0,\ldots, a_{\ell})}|K_{a_0\cdots
a_{\ell}}|^{\sigma}\geq\log\sum_{i=1}^{\#\mathcal K}
|K_i|^{\sigma}-\sigma\log
\kappa.\end{equation}
Let $\nu_{a_0\cdots a_\ell}$ denote the uniform distribution
on the orbit of the $(\ell+1)$-periodic point of $F$ in $K_{a_0\cdots a_\ell}$.
Define an $F$-invariant probability measure $\nu_\ell$ 
supported on $H_q(\mathcal K)$ by
$$\nu_\ell=\rho_\ell\sum_{(a_0,\ldots, a_\ell)}
|K_{a_0\cdots a_\ell}|^{\sigma}
\nu_{a_0\cdots a_\ell},$$
where $\rho_\ell=1/\sum_{(a_0,\ldots, a_\ell)}
|K_{a_0\cdots a_\ell}|^{\sigma}$ is the normalizing constant.
%\ \ \text{where}\ \ \rho_k=\left(\sum_{(a_0,\ldots,a_k)}
%{\rm length}[a_0\cdots a_k]^{\sigma}\right)^{-1}.$$ 
Pick an accumulation point of the
sequence $\{\nu_\ell\}$ and denote it by $\nu_0$. Taking
a subsequence if necessary we may assume this convergence
takes place for the entire sequence. Using the relation
 $\nu_\ell(K_{a_0\cdots a_\ell})=\rho_\ell |K_{a_0\cdots a_\ell}|^{\sigma}$
and 
$|K_{a_0\cdots a_\ell}|\leq
\kappa|\hat X| \exp\{-(\ell+1) \int \log |DF| d\nu_{a_0\cdots a_\ell} \}$
%$|K_{a_0\cdots a_\ell}|^{\sigma}\leq
%\kappa^\sigma e^{-(\ell+1)\nu_{a_0\cdots a_\ell}(\Phi)}$ 
which follows from the bounded distortion
 we have
\begin{align*}
\log\sum_{(a_0,\ldots, a_\ell)}|K_{a_0\cdots a_\ell}|^{\sigma}
&=\sum_{(a_0,\ldots, a_\ell)}\nu_\ell(K_{a_0\cdots
a_\ell})\left(-\log\nu_\ell(K_{a_0\cdots a_\ell})+
\sigma\log |K_{a_0\cdots a_\ell}|\right)\\
&\leq -\sum_{(a_0,\ldots, a_\ell)}
\nu_\ell(K_{a_0\cdots
a_\ell})\log\nu_\ell(K_{a_0\cdots a_\ell})
-\sigma (\ell+1) \int \log |DF| d\nu_\ell 
+ \sigma \log \kappa|\hat X|.
\end{align*}
 A slight modification of the argument in the proof of the
Variational Principle \cite[Theorem 9.10]{Wal82} shows that 
%for any integer $p$ with $1\leq p<\ell$,
%\begin{align*}
%\frac{1}{\ell}\log\sum_{(a_0,\ldots, a_\ell)}
%|K_{a_0\cdots a_\ell}|^{\sigma}
%\leq&-\frac{1}{p}\sum_{(a_0,\ldots,a_p)}
%\nu_\ell(K_{a_0\cdots a_p})
%\log\nu_\ell(K_{a_0\cdots a_p})-\frac{\ell+1}{\ell}      
%\sigma  \int \log |F'| d\nu_\ell 
%+\frac{2p\log\#\mathcal K}{\ell}.\end{align*} 
%Letting $\ell\to\infty$ and
%then $p\to\infty$ we get
\begin{equation}\label{lem2}
\varlimsup_{\ell\to\infty}\frac{1}{\ell}\log\sum_{(a_0,\ldots,
a_\ell)}|K_{a_0\cdots a_\ell}|^{\sigma}\leq
h_F(\nu_0)-\sigma\int\log |DF | d\nu_0 ,
\end{equation}
where
$h_F(\nu_0)$ denotes the entropy of $(F,\nu_0 )$.
%(\ref{lem1}) (\ref{lem2}) yield
%\begin{equation*}
%\log\sum_{i=1}^{\#\mathcal K}|K_i|^{\sigma}-\sigma\log \kappa \leq
%h_F(\nu_0)- \sigma \int \log |DF| d\nu_0.\end{equation*}
Let $\xi=(1/q)\sum_{i=0}^{q-1}(f^i)_*\nu_0$, which is
$f$-invariant. 
It follows from Lemma \ref{horse2}(c) 
that
$|\xi(\varphi) - \alpha| \leq \sqrt{\varepsilon}$
since $\nu_0$ is supported on $H_q(\mathcal K)$.
Then by the definition of $\sigma$ in (\ref{defsigma}), 
(\ref{lem1}) (\ref{lem2}) yield
%Using this and $h(\xi)-(\sigma-\sqrt{\varepsilon})\lambda(\xi)\leq0$,
\begin{align*}\log\sum_{i=1}^{\#\mathcal K} |K_i|^{\sigma}
&\leq h_F(\nu_0)-\sigma\int \log|DF|d\nu_0+\sigma\log \kappa \\
&=q(h(\xi)-\sigma\lambda(\xi))+\sigma\log \kappa 
\leq -q\varepsilon^{\frac{1}{3}}\lambda(\xi)+\sigma\log \kappa .\end{align*}
Lemma \ref{horse2}(b) gives
$$\log\sum_{A\in\mathcal A_n}|A|^{\sigma}
\leq
4\sqrt{\varepsilon}\sigma n+\log\sum_{i=1}^{\#\mathcal K}
|K_i|^{\sigma}.$$ Since
$q\geq(1-\varepsilon)n$ and
 $\lambda(\xi)\geq\lambda_{\rm inf}>0$ we have
\begin{align*}
\log\sum_{A\in\mathcal A_n}|A|^{\sigma}
&\leq  4\sqrt{\varepsilon}\sigma n
-q\varepsilon^{\frac{1}{3}}\lambda(\xi)+\sigma\log \kappa \\
&\leq \left(4\sqrt{\varepsilon}\sigma-(1-\varepsilon)\varepsilon^{\frac{1}{3}}\lambda_{\rm inf}\right)n
+\sigma\log \kappa \leq-\varepsilon^{\frac{1}{3}}\lambda_{\rm inf}n/2,\end{align*} 
where the last inequality holds for sufficiently large $n$.
It follows
that $\sum_{A\in\mathcal A_n}|A|^{\sigma}$
has a negative growth rate as $n$ increases. In addition, the above inequality implies
that the diameters of the elements of $\mathcal A_n$ decrease uniformly as $n$ increases.
Therefore
the Hausdorff $\sigma$-measure of $\Gamma_{k}$
is zero and so $\dim_H\Gamma_{k}\leq\sigma$.
This completes the proof of Proposition \ref{upperd}. \qed

%\begin{lemma}\label{size}
%For every $A\in\mathcal
%A_n$ we have ${\rm length}(A)\leq e^{-\frac{\lambda}{4}n}$.
%\end{lemma}
%\begin{proof}
%Since $A\subset\Delta_0$, $\hat f^nA\in\Delta_l$ holds for some
%$0\leq l\leq n$. If $l\geq \frac{N}{\varepsilon}$, then
%let $B_A$ denote the subinterval of $A$ as in Lemma
%\ref{towerreturn}. Using (\ref{p11}) and ${\rm length}(B_A)\leq
%e^{-\frac{\lambda}{3}n}$ we get
%${\rm length}(A)\leq e^{-\frac{\lambda}{4}n}.$ If $l<\frac{N
%}{\varepsilon}$, then for all $x\in A$ we have
%$|(f^{n-l})'x|\geq \delta e^{\frac{\lambda}{3}(n-l)}$, and thus ${\rm length}(A)\leq
%\delta^{-1}e^{-\frac{\lambda}{3}(n-l)}\leq e^{-\frac{\lambda}{4}n}.$
%\end{proof}

\section{Lower estimate and continuity of Birkhoff spectrum}
In this section we estimate $B_\varphi(\alpha)$ from below,
and finish the proof of the formula in Theorem A.  We then use
this formula to prove the continuity of the Birkhoff spectrum.

\subsection{Lower estimate of the Birkhoff spectrum}\label{lowest}
To estimate $B_\varphi(\alpha)$ from below we will construct a sufficiently large
set of points for which the time averages of $\varphi$ are precisely equal to $\alpha$.
Let $\mathcal M_f^e$ denote the set of ergodic elements of
$\mathcal M_f$. 
\begin{prop}\label{lowd}
Let $\varphi\in C(X)$ and $\alpha\in[c_\varphi,d_\varphi]$. 
Let $\{\mu_i\}_i$ be a sequence in $\mathcal M_f^e$ such that
$|\mu_i(\varphi)-\alpha|<1/i$
and  $h(\mu_i)/\lambda(\mu_i)$ converges as $i\to\infty$.
There exists a closed set
$\Gamma\subset K_\varphi(\alpha)$ such that
\begin{equation*}
\dim_H(\Gamma)\geq \lim_{i\to\infty}\frac{h(\mu_i)}{\lambda(\mu_i)}.\end{equation*}
\end{prop}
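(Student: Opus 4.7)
The plan is to build a closed set $\Gamma$ as the limit of a Moran-type Cantor construction, where stage $i$ of the construction is driven by $\mu_i$. Write $s=\lim_{i\to\infty}h(\mu_i)/\lambda(\mu_i)$. Without loss of generality assume $h(\mu_i)>0$ for all $i$; otherwise $s=0$ and any single point in $K_\varphi(\alpha)$ (available since $\alpha\in[c_\varphi,d_\varphi]$ and $\mathcal M_f^e$ realises the extremes) supplies a trivial $\Gamma$. Lift each $\mu_i$ to an ergodic $\hat\mu_i$ on the tower $\Delta$ via the canonical correspondence; the dynamical partition $\mathcal D$ is generating for $\hat f$ and the Abramov/Kac formulas give $h(\hat\mu_i)=h(\mu_i)\hat\mu_i(R)^{-1}\cdot\text{(Kac)}$, while the tower is uniformly expanding for the induced map $F=\hat f^R$, so the bounded distortion \eqref{bounddist} applies.

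\textbf{Stage $i$.} For each $i$ choose $\varepsilon_i\downarrow 0$. By the ergodic theorem applied to $\varphi$ and $\log|DF|$, and by Shannon--McMillan--Breiman for $(F,\hat\mu_i)$ in the partition $\mathcal D|\Delta_0$, there exist $N_i$ and a set $G_i\subset\Delta_0$ with $\hat\mu_i(G_i)\ge 1/2$ such that for every $n\ge N_i$ and every $x\in G_i$, the $F$-$n$-cylinder $[x]_n$ containing $x$ satisfies
\begin{align*}
 e^{-n(h(\hat\mu_i)+\varepsilon_i)} &\le \hat\mu_i([x]_n) \le e^{-n(h(\hat\mu_i)-\varepsilon_i)},\\
 |\pi[x]_n| &\asymp e^{-n(\lambda_F(\hat\mu_i)\pm\varepsilon_i)},\qquad
 \left|\tfrac{1}{n}S_{S_nR(x)}\varphi(\pi x)-\mu_i(\varphi)\right|<\varepsilon_i,
\end{align*}
where $\lambda_F(\hat\mu_i)=\int\log|DF|\,d\hat\mu_i$, and the length estimate uses \eqref{bounddist} together with the fact that each $F$-branch maps onto $\Lambda^\pm$ with bounded distortion. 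The number of such $n$-cylinders needed to cover $G_i$ is at least $c_ie^{n(h(\hat\mu_i)-2\varepsilon_i)}$. Choose $n_i\ge N_i$ so large that the time-average on any such cylinder, carried over $S_{n_i}R$ iterations of $f$, is within $2\varepsilon_i$ of $\mu_i(\varphi)$.

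\textbf{Nesting.} Pick integers $m_1\ll m_2\ll\cdots$ with $m_{i+1}$ much larger than $\sum_{j\le i}m_jn_j\cdot\|R\|$, so that stage $i+1$ dwarfs everything preceding. At level $i$ we juxtapose $m_i$ independent copies of stage-$i$ cylinders. Between consecutive stages we interpolate by a \emph{bounded} number of Markov transitions: because $F$ has the full Markov property (each branch of $F$ maps onto $\Lambda^\pm$), any stage-$i$ cylinder can be prolonged by prepending one additional $F$-symbol that lands us inside any prescribed stage-$(i+1)$ cylinder. This yields a nested sequence of $F$-cylinders whose intersections form a closed Cantor-type set $\hat\Gamma\subset\Delta_0$; set $\Gamma=\overline{\pi\hat\Gamma}$. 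Using $s=\lim h(\hat\mu_i)/\lambda_F(\hat\mu_i)$ (which equals $\lim h(\mu_i)/\lambda(\mu_i)$ by Abramov) together with the choice $m_{i+1}\gg$ everything before, one checks that for $x\in\hat\Gamma$ the partial sums $S_n\varphi(\pi x)/n$ are dominated by the latest stage and hence converge to $\alpha$; so $\Gamma\subset K_\varphi(\alpha)$.

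\textbf{Dimension lower bound via Mass Distribution Principle.} Place the natural Bernoulli-like probability measure $\nu$ on $\hat\Gamma$ that distributes mass uniformly across the $\ge c_ie^{n_i(h(\hat\mu_i)-2\varepsilon_i)}$ descendants used at each of the $m_i$ sub-stages of stage $i$, then pushes down via $\pi$. For $x\in\Gamma$ and small $r>0$, choose the latest cylinder $[y]$ in the construction with $|\pi[y]|\ge r$; by bounded distortion its successor has comparable length, so $\nu(B(x,r))\lesssim r^{s-\varepsilon}$ for any fixed $\varepsilon>0$ provided $i$ is large. Indeed, matching $r\asymp e^{-n\lambda_F(\hat\mu_i)}$ with the measure bound $e^{-n(h(\hat\mu_i)-2\varepsilon_i)}$ yields exponent $(h(\hat\mu_i)-2\varepsilon_i)/(\lambda_F(\hat\mu_i)+\varepsilon_i)\to s$. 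The Mass Distribution Principle then gives $\dim_H\Gamma\ge s-\varepsilon$ for every $\varepsilon>0$, hence $\dim_H\Gamma\ge s$.

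\textbf{Main obstacle.} The principal difficulty is keeping the length/measure comparison $|[y]_n|\asymp e^{-n\lambda_F(\hat\mu_i)}$ uniform as $i$ varies, while simultaneously controlling time averages of the non-uniformly expanding map $f$ through the tower. The variability of the return time $R$ makes the Abramov correspondence between $h(\hat\mu_i)/\lambda_F(\hat\mu_i)$ and $h(\mu_i)/\lambda(\mu_i)$ delicate; one must verify that it survives the limit $i\to\infty$, and that the cylinder geometry (via the bounded distortion of Section 2.5) remains uniform enough that the Mass Distribution Principle can be applied with exponent arbitrarily close to $s$ rather than a strictly smaller number.
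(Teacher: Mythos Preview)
Your Moran-type construction is the right shape, but there is a genuine gap at the very first step: you assume each ergodic $\mu_i$ lifts to an ergodic $\hat\mu_i$ on the tower $\Delta$. This is not automatic. The tower is built over the small set $\Lambda=\Lambda^-\cup\Lambda^+$, and an ergodic $\mu$ lifts to $\Delta$ if and only if $\mu(\Lambda)>0$, i.e., if and only if $\mu$-a.e.\ orbit visits $\Lambda$. Nothing in the hypotheses rules out $\mu_i(\Lambda)=0$: there are ergodic measures with positive entropy supported on horseshoes that stay entirely outside the tiny interval $\Lambda$, and for such $\mu_i$ there is no $\hat\mu_i$, no induced measure for $F$, and your Shannon--McMillan--Breiman / cylinder-length dichotomy never gets off the ground. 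The ``main obstacle'' you flag (uniformity of the Abramov correspondence as $i\to\infty$) is a secondary worry compared to this; even when the lift exists, you would also need a tail bound on $R$ that is uniform in $i$ to push the cylinder length estimates through, and that is not available either.

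The paper sidesteps all of this by not using the tower for the lower bound at all. Instead it invokes Katok's horseshoe theorem for each $\mu_i$ with $h(\mu_i)>0$ to produce, directly in the original system, a finite family $\Omega_i$ of intervals and a time $\beta_i$ such that $f^{\beta_i}$ maps each element of $\Omega_i$ onto a fixed interval $L_i$, with $(1/\beta_i)\log\#\Omega_i\approx h(\mu_i)$ and $(1/\beta_i)S_{\beta_i}\psi\approx\mu_i(\psi)$ for $\psi\in\{\varphi,\log|Df|\}$ on $\bigcup\Omega_i$. The stages are then linked not by the tower's Markov property but by topological mixing (A4): for each $i$ one chooses a sub-interval $\tilde L_i\subset L_i$ and a time $\gamma_i$ with $f^{\gamma_i}\tilde L_i=Y\supset L_{i+1}$. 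One spends $\kappa_i\gg 1$ laps in $\Omega_i$, passes through $\tilde L_i$ in time $\gamma_i$, and enters $\Omega_{i+1}$; the resulting nested intervals give $\Gamma$. The measure used for the Mass Distribution Principle is simply a weak limit of atomic measures uniformly distributed over one chosen point in each level-$k$ interval. Because the horseshoes are finite and uniformly expanding with explicit derivative control from (ii), the length and counting estimates are elementary and require no tower distortion or Abramov bookkeeping. If you want to rescue your approach, you would first have to replace each $\mu_i$ by a nearby ergodic measure whose support meets $\Lambda$ while preserving $h/\lambda$ and $\int\varphi$ up to $1/i$ --- but the cleanest way to do that is precisely Katok's theorem, at which point you may as well follow the paper's route.
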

It then follows that
\begin{equation}\label{HD'}
B_\varphi(\alpha)\geq\lim_{\varepsilon\to0}{\sup}\left\{\frac{h(\mu)}{\lambda
(\mu)}\colon\mu\in\mathcal
M_f^e,\
\left|\mu(\varphi)-\alpha\right|<\varepsilon\right\}.\end{equation}
To finish, it is left to 
show that the supremum of the right-hand-side of \eqref{HD'} 
may be taken over all invariant probability measures
which are not necessarily ergodic.

%Given  $\mu\in\mathcal M_f$, considering the ergodic decomposition
%one can find a linear combination of ergodic measures which approximates
%the entropy, the Lyapunov exponent of $\mu$ and integrals of continuous functions
%against $\mu$. In view of this and \cite{Kat80},
Using (A4) and a one-dimensional version of Katok's theorem \cite{Kat80}, 
for any $\mu\in\mathcal M_f$ and $\varepsilon>0$
one can find $\nu\in\mathcal M_f^e$ such that:
$|\mu(\varphi)-\nu(\varphi)|\leq\varepsilon$;
$h(\nu)\geq h(\mu)-\varepsilon$;
$\lambda(\nu)\leq\lambda(\mu)+\varepsilon$.
Since $0<\lambda_{\rm inf}\leq\lambda(\mu)\leq\log4$ and $h(\nu)\leq\log2$
we have
$$\frac{h(\nu)}{\lambda(\nu)}
\geq \frac{h(\mu) - \varepsilon} {\lambda(\mu) + \varepsilon}
=\frac{h(\mu)}{\lambda(\mu)} 
- \frac{\varepsilon (h(\mu) + \lambda(\mu))} 
{\lambda(\mu)(\lambda(\mu)+\varepsilon)}
\geq\frac{h(\mu)}{\lambda(\mu)}
-\frac{3\varepsilon\log2}{\lambda_{\rm inf}^2}, $$
%$$\frac{h(\nu)}{\lambda(\nu)}
%=\frac{h(\mu)}{\lambda(\mu)}+\frac{\varepsilon(\frac{h(\mu)}{\lambda(\mu)}-1)}{\lambda(\mu)-\varepsilon}
%\geq\frac{h(\mu)}{\lambda(\mu)}-\frac{\varepsilon(\lambda(\mu)+h(\nu))}{\lambda^2(\mu)}\geq\frac{h(\mu)}{\lambda(\mu)}-\frac{3\varepsilon\log2}{\lambda_{\rm inf}^2},$$
and therefore
$${\sup}\left\{\frac{h(\nu)}{\lambda
(\nu)}\colon\nu\in\mathcal M_f^e,\ \left|\nu(\varphi) 
-\alpha\right|<2\varepsilon\right\}\geq
{\sup}\left\{\frac{h(\mu)}{\lambda (\mu)}\colon\mu\in\mathcal
M_f,\ \left|\mu(\varphi)
-\alpha\right|<\varepsilon\right\}
-\frac{3\varepsilon\log2}{\lambda_{\rm inf}^2}.$$ 
Letting $\varepsilon\to0$ and then using
\eqref{HD'} we obtain
\begin{equation*}
B_\varphi(\alpha)\geq\lim_{\varepsilon\to0}{\sup}\left\{\frac{h(\mu)}{\lambda
(\mu)}\colon\mu\in\mathcal M_f,\ \left|\mu(\varphi)
-\alpha\right|<\varepsilon\right\}.\end{equation*}
From this and the upper estimate in Sect.3 we obtain the formula
in Theorem A. \medskip

\noindent{\it Proof of Proposition \ref{lowd}.}
%We construct a Moral fractal $\Gamma\subset K_\varphi(\alpha)$
%such that $\dim_H(\Gamma)\geq D_0$.
%Our strategy is to put a probability measure $\nu_0$ on it and compare $\nu_0(I)$ and ${\rm %length}(I)^{D_0-\varepsilon}$
%for any small interval $I$ and any small $\varepsilon>0$.
If $ h(\mu_i) \to0$ then there is nothing to prove 
since $\lambda (\mu_i) \geq \lambda_{\inf}>0$.
So we may assume $h(\mu_i)>0$ for each $i$.
By a result of \cite{Kat80}, for any ergodic measure with positive entropy 
one can construct a horseshoe and use it to approximate its entropy, Lyapunov exponent and the integral of a continuous function. Namely, for each $i$
there exist $\beta_i>0$, a closed
interval $L_i$ and a family $\Omega_i$ of pairwise disjoint
closed intervals in the interior of $L_i$ such that:
\begin{itemize}
\item[(i)] for each $I\in\Omega_i$,
$f^{\beta_i}I=L_i$;

\item[(ii)] for any $x\in \bigcup_{I\in\Omega_i} I$ and $\psi\in\{\varphi,\log|Df|\}$, $\left|(1/\beta_i)
S_{\beta_i}\psi(x)
-\mu_i(\psi)\right|\leq1/i$;

\item[(iii)] $(1/\beta_i)\log\#\Omega_i\geq h(\mu_i)-1/i.$
\end{itemize}
We construct a family of intervals at smaller and smaller scales
which wander around different horseshoes.
By (A4), for each $i$ it is possible to choose $\gamma_i>0$ and a closed interval $\tilde L_i\subset L_i$ such that $f^{\gamma_i}$ sends $\tilde L_i$ homeomorphically onto $Y$.
Choose
a sequence $\{\kappa_i\}$ of positive integers inductively as follows.
Start with $\kappa_1=1$. Given $\kappa_{i-1}$, choose $\kappa_i$ to be a large integer which 
depends on
$\beta_1,\beta_2,\ldots,\beta_{i+1},\gamma_1,
\gamma_2,\ldots,\gamma_{i-1},\kappa_1,\kappa_2,\ldots,\kappa_{i-1},i,\sup|\varphi|,\alpha.$
Requirements among these constants will be made explicit at the end of the proof.

For each $k\geq1$, let $n=n(k)$, $s=s(k)$ be integers such that
$$
k=\kappa_1+\kappa_2+\cdots+\kappa_n+s\ \ \text{and}\ \ 0\leq s< \kappa_{n+1}.$$
Let
$$\Omega^{(k)}=\underbrace{\Omega_1\times \cdots\times\Omega_1}_{\kappa_1}
\times\underbrace{\Omega_2\times \cdots\times\Omega_2}_{\kappa_2} 
\times\cdots\times\underbrace{\Omega_{n}\times \cdots\times\Omega_{n}}_{\kappa_{n}} 
\times\underbrace{\Omega_{n+1}\times \cdots\times\Omega_{n+1}}_{s}.$$
Elements of $\Omega^{(k)}$ are denoted by $(I_1,\ldots,I_{k})$, i.e.,
$I_1\in\Omega_1,\ldots,I_{\kappa_1}\in\Omega_1,I_{\kappa_1+1}\in\Omega_2,$
and so on.

For each $k\geq1$ and $(I_1,\ldots,I_{k})\in\Omega^{(k)}$ we associate a closed
interval $[I_1,\ldots,I_{k}]$ inductively as follows.
Observe that $\Omega^{(1)}=\Omega_1$.
For each $I\in\Omega^{(1)}$,
define $[I]=I$.
Given $k\geq1$, $(I_1,\ldots,I_{k})\in\Omega^{(k)}$,
$[I_1,\ldots,I_{k}]$, $(I_1,\ldots,I_{k},I_{k+1})\in\Omega^{(k+1)}$,
define $[I_1,\ldots,I_{k},I_{k+1}]\subset[I_1,\ldots,I_{k}]$ by
$$[I_1,\ldots,I_{k},I_{k+1}]=\begin{cases}
&(f^{t}|[I_1,\ldots,I_{k}])^{-1}I_{k+1}\ \ \text{if}\ \ s<\kappa_{n+1}-1;\\
&(f^{t}|[I_1,\ldots,I_{k}])^{-1}((f^{\beta_{n+1}}|I_{k+1})^{-1}\tilde L_{n+1})\ \ \text{if} \ \ s
=\kappa_{n+1}-1,\end{cases}$$
where $t=t(k)$ is defined by
$$t=\beta_1\kappa_1+\gamma_1+\beta_2\kappa_2+\gamma_2+\cdots +
\beta_{n}\kappa_{n}+\gamma_{n} +\beta_{n+1}s.$$

Set
$$\mathcal F^{(k)}=\{[I_1,\ldots,I_{k}]\colon (I_1,\ldots,I_{k})\in\Omega^{(k)} \}.$$
This is a collection of pairwise disjoint closed intervals with the following properties:
if $s(k)>0$, then $f^{t(k)}[I_1,\ldots,I_{k}]=L_{n(k)+1}$;
if $s(k)=0$, then 
$f^{t(k)}$ sends $[I_1,\ldots,I_{k}]$ homeomorphically onto $Y$.
Observe that $\{\bigcup_{I\in\mathcal F^{(k)}}I\}_k$ is a nested sequence of closed sets.
Define
$$\Gamma=\bigcap_{k=1}^\infty\bigcup_{I\in\mathcal F^{(k)}}I.$$

Points in $\Gamma$  continue traveling from one horseshoe to the next generated by 
$\Omega_k$, $k\geq1$.
For the choice of $\{\kappa_i\}$ we will request
\begin{equation}\label{qn}
\kappa_i\gg\max\{\beta_1,\beta_2,\ldots,\beta_{i+1},\gamma_1,
\gamma_2,\ldots,\gamma_{i-1},\kappa_1,\kappa_2,\ldots,\kappa_{i-1}\}.\end{equation}
Then, generic finite orbits of $\Gamma$ spend most of their times near the last 
or the second last horseshoes,
and gain time averages in this duration.
As a result, the time averages along the finite orbits become nearly $\alpha$.
In fact, the following holds.

\begin{lemma}
$\Gamma\subset K_\varphi(\alpha)$.
\end{lemma}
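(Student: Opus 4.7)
The goal is to show that every $x\in\Gamma$ has $\frac{1}{N}S_N\varphi(x)\to\alpha$. Points in $\Gamma$ have a very rigid combinatorial orbit structure: the first $\kappa_1\beta_1$ iterates perform $\kappa_1$ successive visits of length $\beta_1$ through pieces of the level-$1$ horseshoe $\Omega_1$; then a bridge of $\gamma_1$ iterates lands the orbit in $L_2$; then come $\kappa_2$ visits of length $\beta_2$ through level-$2$ pieces; then a bridge of $\gamma_2$ iterates; and so on. Write $T_n=\sum_{j=1}^n(\kappa_j\beta_j+\gamma_j)$, and for given $N$ let $n=n(N)$ be determined by $T_{n-1}<N\le T_n$.

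The main input is property (ii) of the Katok horseshoe construction: for every $y\in\bigcup_{I\in\Omega_j}I$, $|S_{\beta_j}\varphi(y)-\beta_j\mu_j(\varphi)|\le\beta_j/j$. Applied to each of the $\kappa_j$ successive visits at level $j$, this gives
\[
\bigl| S^{(j)}\varphi(x)-\kappa_j\beta_j\mu_j(\varphi)\bigr|\le \kappa_j\beta_j/j,
\]
where $S^{(j)}\varphi(x)$ denotes the Birkhoff sum over the $\kappa_j\beta_j$ horseshoe iterates at level $j$. Fix $\eta>0$ and choose $i_0$ so that $|\mu_j(\varphi)-\alpha|<\eta/3$ and $1/j<\eta/3$ for every $j\ge i_0$. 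Split
\[
S_N\varphi(x)=S_{T_{i_0-1}}\varphi(x)+\sum_{j=i_0}^{n-1}\bigl(S^{(j)}\varphi(x)+S^{{\rm br},j}\varphi(x)\bigr)+S^{\rm curr}\varphi(x),
\]
where $S^{{\rm br},j}\varphi(x)$ is the contribution of the $\gamma_j$ bridge iterates and $S^{\rm curr}\varphi(x)$ is the contribution of the remaining $N-T_{n-1}$ iterates at level $n$. Each complete level-$j$ term with $j\ge i_0$ is within $(2\eta/3)\kappa_j\beta_j$ of $\alpha\kappa_j\beta_j$, and the partial block $S^{\rm curr}\varphi(x)$ is handled by applying property (ii) on the maximal number of complete $\beta_n$-visits it contains and absorbing what is left.

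The uncontrolled pieces are thus: the initial block $S_{T_{i_0-1}}\varphi(x)$, bounded by the constant $T_{i_0-1}\sup|\varphi|$; each bridge, bounded by $\gamma_j\sup|\varphi|$; and a boundary error from $S^{\rm curr}\varphi(x)$ of size at most $(\beta_n+\gamma_n)\sup|\varphi|$. Dividing by $N\ge T_{n-1}\ge\kappa_{n-1}\beta_{n-1}$, all of these contribute $O(1/(\kappa_{n-1}\beta_{n-1}))$ times earlier data. It therefore suffices to strengthen the schedule \eqref{qn} by requiring
\[
\kappa_i\beta_i\ \ge\ \frac{3\sup|\varphi|}{\eta}\bigl(T_{i-1}+\gamma_i+\beta_{i+1}+\gamma_{i+1}\bigr)
\]
at the inductive step that defines $\kappa_i$; this is a legitimate request, because the right-hand side depends only on data fixed before $\kappa_i$ is chosen. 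With this in hand, $|\frac{1}{N}S_N\varphi(x)-\alpha|<\eta$ for all $N$ large, and since $\eta>0$ was arbitrary the claim follows.

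The main nuisance is the ``current'' partial block $S^{\rm curr}\varphi(x)$: the breakpoint $N$ may fall in the middle of a $\beta_n$-visit, or somewhere inside the $\gamma_n$ bridge, and property (ii) only gives information on complete $\beta_n$-visits. The unavoidable boundary error of size $(\beta_n+\gamma_n)\sup|\varphi|$ must be absorbed into the denominator $N$, which is why the growth condition \eqref{qn} must be arranged so that $\kappa_{n-1}\beta_{n-1}$ dominates $\beta_n+\gamma_n$—precisely the content of \eqref{qn}, up to the quantitative strengthening above.
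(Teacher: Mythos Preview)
Your approach is essentially the same as the paper's: decompose the orbit into horseshoe blocks, bridges, and a partial tail; control the horseshoe blocks via property (ii) together with $|\mu_j(\varphi)-\alpha|<1/j$; and absorb the bridges and the boundary leftover using the rapid growth of $\kappa_i$. There is, however, a quantifier slip. You fix $\eta>0$ first and then propose to strengthen the schedule \eqref{qn} with the $\eta$-dependent constant $3\sup|\varphi|/\eta$. But the sequence $\{\kappa_i\}$, and hence the set $\Gamma$, is chosen once and for all \emph{before} the lemma is proved; you cannot retroactively adjust it for each $\eta$, so the closing ``since $\eta>0$ was arbitrary'' is not legitimate as written.

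The repair is immediate: replace the $\eta$-dependent factor by one that grows with the level index, e.g.\ require $\kappa_i\beta_i\ge i\bigl(T_{i-1}+\gamma_i+\beta_{i+1}+\gamma_{i+1}\bigr)$. Then the bridge and boundary contributions divided by $N$ are $O(1/n)$ uniformly, and for any given $\eta>0$ one simply takes $N$ (hence $n$) large. This is exactly what the paper does: it obtains the uniform bound $|S_q\varphi(x)-q\alpha|\le 5q/n$ directly, with the scheduling absorbed into the phrase ``$\kappa_n$ is chosen sufficiently large so that the second inequality holds,'' and then lets $n\to\infty$.
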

\begin{proof}
Let $x\in\Gamma$. For a large integer $q$ let
$k\geq1$ be the maximal such that $t(k)\leq q$.
Then $q-t(k)\leq \beta_{n(k)+1}$.
Splitting the time interval $[0,q-1]$ is a concatenation of the duration around horseshoes and the transition between horseshoes,
and then applying (ii) to each of the corresponding orbit segments
we have
\begin{align*}|S_{q}\varphi(x)-q\alpha|\leq&
\sum_{j=0}^{\kappa_1-1}|S_{\beta_{1}}\varphi(f^{\beta_1j}x)-\beta_{1}\alpha|+
|S_{\gamma_{1}}\varphi(f^{\kappa_1\beta_1}x)-\gamma_{1}\alpha|\\
&+\sum_{j=0}^{\kappa_2-1}|S_{\beta_{2}}\varphi(f^{\beta_1\kappa_1+\gamma_1+\beta_2j}x)-\beta_{2}\alpha|
+|S_{\gamma_{2}}\varphi(f^{\beta_1\kappa_1+\gamma_1+\beta_2\kappa_2}x)-\gamma_{2}\alpha|+\cdots\\
&+\sum_{j=0}^{\kappa_{n}-1}|S_{\beta_{n}}\varphi(f^{\sum_{i=1}^{n-1}(\beta_i\kappa_i+\gamma_i)+\beta_{n}j}x)-\beta_{n}\alpha|
+|S_{\gamma_{n}}\varphi(f^{\sum_{i=1}^{n}\beta_i\kappa_i+\sum_{i=1}^{n-1}\gamma_i}x)-\gamma_{n}\alpha|\\
&+\sum_{j=0}^{s-1}|S_{\beta_{n+1}}\varphi(f^{\sum_{i=1}^{n}(\beta_i\kappa_i+\gamma_i)+
\beta_{n+1}j}x)-\beta_{n+1}\alpha|\\
&+|S_{q-t}\varphi(f^tx)-(q-t)\alpha|.
 \end{align*}
Using (ii) and the fact that $x$ is contained in an element of 
$\mathcal F^{(k)}$,  for every $2\leq\ell\leq n$
we have
\begin{align*}
\sum_{j=0}^{\kappa_{\ell}-1}|S_{\beta_{\ell}}\varphi(f^{\sum_{i=1}^{\ell-1}(\beta_i\kappa_i+\gamma_i)+\beta_{\ell}j}x)-\beta_{\ell}\alpha|\leq&
|S_{\beta_{\ell}}\varphi(f^{\sum_{i=1}^{\ell-1}(\beta_i\kappa_i+\gamma_i)+\beta_{\ell}j}x)-\beta_\ell\mu_{\ell}(\varphi)|\\
&+|\beta_{\ell}
\mu_{\ell}(\varphi)-\beta_{\ell}\alpha|\leq\frac{2\beta_{\ell}}{\ell}.
\end{align*}
Summing these and other reminder terms we get
\begin{align*}
|S_{q}\varphi(x)-q\alpha|&\leq\sum_{i=1}^n\gamma_{i}(\sup|\varphi|+\alpha)+\sum_{i=1}^n
\frac{2\beta_{i}
\kappa_{i}}{i}+\frac{2\beta_{n+1}s}{n+1}+(q-t)(\sup|\varphi|+\alpha)\\
&\leq\frac{3\beta_{n}
\kappa_{n}}{n}+\frac{2\beta_{n+1}s}{n+1}\leq\frac{5q}{n},
\end{align*}
where $\kappa_n$ is chosen sufficiently large so that the second inequality holds.
Since $n\to\infty$ as $q\to\infty$,
$x\in K_\varphi(\alpha)$ follows.
\end{proof}

For each $I\in\mathcal F^{(k)}$ choose a point $x_I\in I\cap\Gamma$ and define an atomic probability measure $\nu_k$ uniformly distributed on the set $\{x_I\colon I\in\mathcal F^{(k)}\}$.
Pick an accumulation point of the sequence $\{\nu_k\}$ and denote it by $\nu$. Since $\Gamma$ is closed we have $\nu(\Gamma)=1$.
For $x\in X$ and $\rho>0$, let $D_\rho(x)=\{y\colon |x-y|\leq\rho\}$. 
the inequality in Proposition \ref{lowd} follows from  \cite[Proposition 2.1]{You82} and the next

\begin{lemma}\label{mass}
For any $x\in \Gamma$ we have 
$$\varliminf_{\rho\to 0}\frac{\log \nu D_\rho(x)}{\log \rho} 
\geq \lim_{i\to\infty}\frac{h(\mu_i)}{\lambda(\mu_i)}.$$
\end{lemma}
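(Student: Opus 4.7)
\medskip
\noindent\emph{Strategy.} The plan is to apply the Mass Distribution Principle by tracking how $\nu(D_\rho(x))$ decays under the nested Cantor construction. Fix $x\in\Gamma$ and let $I_k(x)\in\mathcal F^{(k)}$ denote the unique generation-$k$ interval containing $x$. Because each $\nu_m$ is uniform on $\{x_I\colon I\in\mathcal F^{(m)}\}$ and for $m\geq k$ a point $x_{I'}$ lies in $I_k(x)$ iff $I'\subset I_k(x)$, one checks $\nu_m(I_k(x))=1/\#\mathcal F^{(k)}$ for every $m\geq k$, whence $\nu(I)=1/\#\mathcal F^{(k)}$ for every $I\in\mathcal F^{(k)}$ (using that the corresponding masses sum to $1$).

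Given small $\rho>0$, choose $k=k(\rho)$ with $|I_k(x)|\leq\rho<|I_{k-1}(x)|$ and set $n=n(k)$, $s=s(k)$; by \eqref{qn} one has $n\to\infty$ as $\rho\to 0$. The bounded distortion of the horseshoe branches of $f^{\beta_i}$ forces any two $\mathcal F^{(k-1)}$-intervals sharing a common $\mathcal F^{(k-2)}$-parent to have comparable diameters, so a ball of radius $\rho<|I_{k-1}(x)|$ meets $I_{k-1}(x)$ together with at most an absolute constant number of adjacent $\mathcal F^{(k-1)}$-intervals. This gives $-\log\nu(D_\rho(x))\geq\log\#\mathcal F^{(k-1)}-O(1)$, and property (iii) in the construction of $\Omega_i$ yields
\[
\log\#\mathcal F^{(k-1)}\;\geq\;\sum_{i=1}^{n-1}\kappa_i\beta_i\bigl(h(\mu_i)-\tfrac1i\bigr)\;+\;s\beta_n\bigl(h(\mu_n)-\tfrac1n\bigr).
\]

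For the complementary bound I estimate $|I_k(x)|$ from above. By construction $f^{t(k)}$ maps $I_k(x)$ diffeomorphically onto $L_{n+1}$ or $Y$, and applying the Koebe Principle to the definite Koebe space around each horseshoe factor $\Omega_i\subset L_i$ supplies bounded distortion of $f^{t(k)}$ on $I_k(x)$; hence $|I_k(x)|\asymp 1/|Df^{t(k)}(x)|$. Decomposing the orbit of $x$ under $f^{t(k)}$ into $\kappa_i$ horseshoe passages of length $\beta_i$ in each era $i\leq n$, $s$ passages of length $\beta_{n+1}$, and $n$ transitions of length $\gamma_i$, property (ii) applied to $\log|Df|$ along each passage gives
\[
-\log\rho\;\leq\;-\log|I_k(x)|\;\leq\;\sum_{i=1}^{n}\kappa_i\beta_i\bigl(\lambda(\mu_i)+\tfrac1i\bigr)+s\beta_{n+1}\bigl(\lambda(\mu_{n+1})+\tfrac1{n+1}\bigr)+\log 4\sum_{i=1}^{n}\gamma_i+O(1).
\]

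Taking the ratio $\log\nu(D_\rho(x))/\log\rho$ and invoking the strong growth condition \eqref{qn}, the era-$n$ (or era-$(n{+}1)$) term dominates both sums: contributions from eras $i<n$ are of order $\sum_{i<n}\kappa_i\beta_i$, negligible relative to $\kappa_n\beta_n$, and the transition terms $\sum_{i\leq n}\gamma_i$ are smaller still. Hence the ratio is at least $(h(\mu_n)-1/n)/(\lambda(\mu_n)+1/n)+o(1)$ (or the analogous $(n{+}1)$-expression when $s$ is of order $\kappa_{n+1}$), and letting $\rho\to 0$ gives $\lim_{i\to\infty}h(\mu_i)/\lambda(\mu_i)$. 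The main technical obstacle is establishing the uniform bounded distortion of $f^{t(k)}$ on $I_k(x)$: the map is a concatenation of horseshoe branches interleaved with $f^{\gamma_i}$-transitions, and one must verify that the Koebe distortion from each horseshoe factor and the multiplicative constants absorbed by the transitions do not accumulate, which is ensured by the definite Koebe space around each $\Omega_i\subset L_i$ together with the relative smallness of the $\gamma_i$ built into \eqref{qn}.
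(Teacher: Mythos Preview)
Your overall strategy is right, but there is a genuine gap in the counting step, and the paper's route is both simpler and avoids it. You claim that $D_\rho(x)$ with $\rho<|I_{k-1}(x)|$ meets only an absolute constant number of $\mathcal F^{(k-1)}$-intervals, because siblings sharing a common $\mathcal F^{(k-2)}$-parent have comparable diameters. But $D_\rho(x)$ need not stay inside a single $\mathcal F^{(k-2)}$-parent, and across parent boundaries there is no such comparability; your distortion argument does not control this. Relatedly, the bounded distortion of $f^{t(k)}$ on $I_k(x)$ is \emph{not} uniform in $k$: the distortion constants from the $n$ horseshoe eras and from the $n$ transition maps $f^{\gamma_i}|\tilde L_i$ multiply, so they \emph{do} accumulate, contrary to your final sentence. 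One can argue that $\sum_{i\leq n}\log(C_iD_i)$ is dominated by $\kappa_n\beta_n$ via \eqref{qn}, but that is a different (and weaker) statement than what you assert, and you would still need it to feed into a corrected counting bound.

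The paper sidesteps all of this by never invoking distortion. Instead of choosing $k$ via the $x$-dependent condition $|I_k(x)|\leq\rho<|I_{k-1}(x)|$, it defines the explicit decreasing sequence
\[
a_{n,s}=\exp\!\Bigl[-\beta_n\kappa_n\bigl(\lambda(\mu_n)+\tfrac{2}{n}\bigr)-\beta_{n+1}s\bigl(\lambda(\mu_{n+1})+\tfrac{1}{n+1}\bigr)\Bigr]
\]
and picks $k$ so that $a_{n(k),s(k)}<\rho\leq a_{n(k-1),s(k-1)}$. The key observation is that \emph{every} $I\in\mathcal F^{(k)}$ satisfies a uniform lower length bound, obtained from the Mean Value Theorem together with the \emph{upper} bound on $|Df^{t(k)}|$ coming from (ii); no Koebe principle is needed. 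Since the intervals are pairwise disjoint, pigeonhole bounds the number meeting $D_\rho(x)$ by roughly $2\rho/\min_I|I|$, which the paper computes as $2\exp[\beta_{n+1}s(\tfrac{2}{n}-\tfrac{1}{n+1})]$. This is \emph{not} an absolute constant, but it is small enough that after dividing by $\#\mathcal F^{(k)}$ and forming the ratio with $\log\rho$, the limit $\lim_i h(\mu_i)/\lambda(\mu_i)$ drops out. Replacing your distortion-based counting with this one-sided derivative bound plus pigeonhole would close the gap.
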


%We finish the proof \eqref{HD} assuming the conclusion of this lemma.
%For arbitrary $\varepsilon>0$ and $c>0$, let $\rho_0>0$ denote the constant as in Lemma \ref%%{mass}.
%Consider a covering of $\Gamma$ by balls $\{B_{\rho_i}(x_i)\}_{i=1}^m$
%of radii $<\rho_0$. Then
%$$\nu_0(\Gamma)\leq\sum_{i=1}^m\nu_0(B_{\rho_i}(x_i))\leq c\sum_{i=1}^m\rho_i^{D_0-\varepsilon}.$$
%This implies that the Hausdorff $(D_0-\varepsilon)$-measure of $\Gamma$ is $\geq\frac{1}{c}$.
%Since $c>0$ is arbitrary, it follows that the Hausdorff $(D_0-\varepsilon)$-measure of $\Gamma$ %is $\infty$.
%Since $\varepsilon>0$ is arbitrary,
%we obtain $\dim_H(\Gamma)\geq D_0$.
%\begin{remark}
%{\rm \eqref{HD} also follows from [\cite{You82} Proposition 2.1].}
%\end{remark}
%\begin{lemma}\label{frostman}
%For any $x\in \Gamma$ we have
%$$\varliminf_{\rho\to0}\frac{\log\nu_0B_\rho(x)}{\log\rho}\geq\lim_{n\to\infty}\%frac{h(\mu_n)}{\lambda(\mu_n)}.$$
%\end{lemma}
%\eqref{HD} follows from Lemma \ref{frostman} and [\cite{You82} Proposition 2.1]

\begin{proof}
Consider the set of
pairs $(n,s)$ of integers such that
$n\geq0$ and $0\le s< \kappa_{n+1}$.
We introduce an order in this set as follows:
$(n_1,s_1)<(n_2,s_2)$ if $n_1<n_2$ or $n_1=n_2$ and $s_1<s_2$.
For a pair $(n,s)$ in this set, let
\begin{equation*}
a_{n,s}= \exp \left[-\beta_{n}\kappa_{n}\left(\lambda(\mu_n)+ \frac{2}{n}\right)
-\beta_{n+1}s\left(\lambda(\mu_{n+1})+ \frac{1}{n+1}\right) \right].\end{equation*}
Using \eqref{qn} 
it is easy to show that $a_{n+1,0}<a_{n,\kappa_{n+1}-1}$.
%\begin{claim}\label{decrease}
%$a_{n+1,0}<a_{n,q_{n+1}-1}$.
%\end{claim}
%\begin{proof}
%By definition we have
%\[
%a_{n+1,0}= \exp \left(-q_{n+1}k_{n+1}\left(\lambda(\mu_{n+1})+ \frac{2}{n+1}\right)\right),\]
%and
%\[
%a_{n,q_{n+1}-1}= \exp \left(-q_{n}k_{n}\left(\lambda(\mu_n)+
%\frac{2}{n}\right)
%-(q_{n+1}-1)k_{n+1}\left(\lambda(\mu_{n+1})+ \frac{1}{n+1}\right) \right).\]
%Using $q_{n+1}\gg\max\{q_n,k_n\}$ from \eqref{qn} 
%it is not hard to show that $a_{n+1,0}<a_{n,q_{n+1}-1}$.
%\end{proof}
%Hence, for any small
%$\rho>0$ either of the following holds: (i) there exist $(n,s),(n,s+1)\in\mathcal A$ such that $a_{n,s+1}\leq \rho\leq a_{n,s}$; (ii) there exist $(n+1,0),(n,q_{n+1}-1)\in \mathcal A$ such that $a_{n+1,0}\leq \rho\leq a_{n,q_{n+1}-1}$.
Hence
the sequence $\{a_{n,s}\}$ is monotone decreasing. 
Then for given small $\rho>0$ one can choose $k$ such that  
$ a_{n(k),s(k)} < \rho\leq a_{n(k-1),s(k-1)}$.

Let $I\in\mathcal F^{(k)}$.  
We have $\nu(\partial I)=0$, and for every $q\geq k$,
$$\nu_q(I)=\frac{\#\{J\in \mathcal F^{(q)}\colon J\subset I\}}{\#\mathcal F^{(q)}}=\frac{1}{\#\mathcal F^{(k)}}.$$
Hence
\begin{equation*}\label{nu0}\nu(I)=\lim_{q\to\infty}\nu_q(I)=\frac{1}{\#\mathcal F^{(k)}}.\end{equation*}

Using (ii) for $\psi=\log|Df|$ and (\ref{qn}), for all $x\in I$ we have
\begin{align*}
|Df^{t}(x)|&\leq\exp\left[\beta_{n}\kappa_{n}\left(\lambda(\mu_{n})+\frac{2}{n}\right)+\beta_{n+1}s
\left(\lambda(\mu_{n+1})+\frac{2}{n}\right)\right].
\end{align*}
Since $f^{t}I\subset X$,
the Mean Value Theorem gives
\begin{equation}\label{Q}
|I|\geq\frac{1}{2} \exp\left[-\beta_{n}\kappa_{n}\left(\lambda(\mu_{n})+\frac{2}{n}\right)-\beta_{n+1}s\left(\lambda(\mu_{n+1})+\frac{2}{n}\right)\right].
\end{equation}
Hence, for any $x\in \Gamma$,  
$D_\rho(x)$ intersects at most $2\exp\left[\beta_{n+1}s\left(2/n-1/(n+1)\right) \right]$-number
of elements of $\mathcal F^{(k)}$.
Using (iii) we have  \begin{align*}
\#\mathcal F^{(k)}&\geq (\#\Omega_{n})^{\kappa_{n}}\cdot(\#\Omega_{n+1})^{s}\geq\exp
\left[\beta_{n}\kappa_{n}\left(h(\mu_{n})-\frac{1}{n}\right)+
\beta_{n+1}s\left(h(\mu_{n+1})-\frac{1}{n+1}\right)\right],
\end{align*}
and therefore
\begin{align*}
\nu D_\rho(x)&\leq\frac{2}{\#\mathcal F^{(k)}}\exp\left[\beta_{n+1}s\left(\frac{2}{n}-\frac{1}{n+1}\right) \right]\\
&\leq2\exp\left[-\beta_{n}\kappa_{n}\left(h(\mu_{n})-\frac{1}{n}\right)-
\beta_{n+1}s\left(h(\mu_{n+1})-\frac{2}{n}\right)\right].\end{align*}
This yields
$$\frac{\log\nu D_\rho(x)}{\log\rho}\geq\frac{\beta_{n}\kappa_{n}\left(h(\mu_{n})-
1/n\right)+
\beta_{n+1}s\left(h(\mu_{n+1})-2/n\right)}
{\beta_{n}\kappa_{n}\left(\lambda(\mu_n)+2/n\right)
+\beta_{n+1}s\left(\lambda(\mu_{n+1})+1/(n+1)\right)}+\frac{\log2}{\log\rho}.$$
The desired inequality holds since $n \to\infty$ as $\rho\to0$.
\end{proof}

%$\frac{\log\nu D_{\rho}(x)}{\log\rho}\geq \zeta+\frac{\log c}{\log\rho}$.

\subsection{Continuity of the Birkhoff spectrum.}\label{continuity}

%We first consider the case where $\{\mu\in\mathcal M_f^e\colon0<|\mu(\varphi)-\alpha_0|<\varepsilon\}=\emptyset$ holds for some $\varepsilon>0$.
%Then for any $\alpha$ close to $\alpha_0$ we have
%$\{\mu\in\mathcal M_f^e\colon0<|\mu(\varphi)-\alpha|<\varepsilon/2\}=
%\emptyset$. Hence there is no ergodic $\mu$ with $\mu(\varphi)=\alpha_0$, for otherwise the second set is nonempty for $\alpha$ close to and differs from $\alpha_0$. This yields $B_\varphi(\alpha)=0$.
%The same reasoning gives $B_\varphi(\alpha)=0$ for any $\alpha$ close to $\alpha_0$, and thus the continuity holds at $\alpha_0$.
%We now consider the case where
%$\{\mu\in\mathcal M_f^e\colon0<|\mu(\varphi)-\alpha_0|<\varepsilon\}\neq\emptyset$ holds for %any $\varepsilon>0$.
%We show that the spectrum $\alpha\mapsto B_\varphi(\alpha)$ is continuous, and monotone in %the sense of Theorem A. 

From the formula in Theorem A, the spectrum is upper semi-continuous.
We argue by contradiction assuming that the spectrum is not lower semi-continuous at a point 
 $\alpha_0\in[c_\varphi,d_\varphi]$.
Then it is possible to choose $\epsilon_0>0$ and a monotone sequence $\{\alpha_n\}$ such that $\alpha_n\to\alpha_0$ and
\begin{equation}\label{lowdeq1}B_\varphi(\alpha_n)\leq B_\varphi(\alpha_0)-\epsilon_0.\end{equation} Let us suppose that $\{\alpha_n\}$ is monotone increasing.
Take $\mu_c\in\mathcal M_f$ with $\mu_c(\varphi)=c_\varphi$. The formula in Theorem A 
allows us to choose a sequence $\{\mu_k\}$ in $\mathcal M_f$ such that
$h(\mu_k)/\lambda(\mu_k)\geq B_\varphi(\alpha_0)-\epsilon_0/4$ and
$\mu_k(\varphi)\to\alpha_0$.
Choose a subsequence $\{\mu_{k(n)}\}$ such that
$\alpha_n\leq\mu_{k(n)}(\varphi)$. 
For each $n$ choose $0\leq t_n\leq1$ such that
$(1-t_n)\mu_c(\varphi)+t_n\mu_{k(n)}(\varphi)=\alpha_n$, and define
$\nu_n=(1-t_n)\mu_c+t_n\mu_{k(n)}$.
For all large $n$ we have 
$$B_\varphi(\alpha_n)=B_\varphi((1-t_n)\mu_c(\varphi)+t_n\mu_{k(n)}(\varphi))\geq \frac{h(\nu_n)}{\lambda (\nu_n)}\geq
B_\varphi(\alpha_0)- \epsilon_0 /2.$$
The second inequality follows from the linearity of entropies and Lyapunov exponents 
on measures, $t_n\to1$ and $\inf_{n}\lambda(\mu_{k(n)})\geq\lambda_{\inf}>0$.
This yields a contradiction to (\ref{lowdeq1}).
In the case where $\{\alpha_n\}$ is monotone decreasing,  take 
$\mu_d\in\mathcal M_f$ with $\mu_d(\varphi)=d_\varphi$ and use it in the place of $\mu_c$.

\section{Large deviation principle}
In this last section we prove Theorem B.
This amounts to proving the next proposition which gives an upper bound of 
deviation probabilities in terms of
the free energies of invariant measures. 

\begin{prop}\label{upper}
Let $d\geq1$,
$\varphi_1,\ldots,\varphi_d\in C(X)$ be Lipschitz, and let $\alpha_1,\ldots,\alpha_d\in\mathbb R$.
For any $\varepsilon>0$ there exists $n_0>0$ such that for every
$n\geq n_0$ there exists $\eta\in\mathcal
M_f$ such that:
\begin{equation}\label{upper1}
\frac{1}{n} \log \left|\left\{x\in\Lambda\colon
\frac{1}{n}S_n\varphi_j(x)\geq \alpha_j,\ j=1,\ldots,d\right\}\right|\leq
(1-\varepsilon)F(\eta)+4\sqrt{\varepsilon};\end{equation}
\begin{equation}\label{upper5}
\eta(\varphi_j) \geq \alpha_j-\sqrt{\varepsilon},\ \ \
j=1,\ldots,d.
\end{equation}
\end{prop}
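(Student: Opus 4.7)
The plan is to run the argument of Proposition \ref{upperd} with the exponent $\sigma$ set to $1$ (so that the Hausdorff-type sum $\sum|A|^\sigma$ is replaced by the Lebesgue sum $\sum|A|$) and with the scalar window $|\frac{1}{n}S_n\varphi-\alpha|<\varepsilon$ replaced by the vector condition $\frac{1}{n}S_n\varphi_j\geq\alpha_j$, $j=1,\ldots,d$. Start by defining
$$\mathcal A_n=\Bigl\{A\in\bigvee_{i=0}^{n-1}\hat f^{-i}\mathcal D\colon A\subset\Delta_0,\ \exists\,x\in A,\ \tfrac{1}{n}S_n\varphi_j(x)\geq\alpha_j\ \forall j\Bigr\}.$$
The atoms of this refinement partition $\Delta_0\equiv\Lambda$ modulo a Lebesgue-null set, so the deviation set on the left of \eqref{upper1} is covered by $\bigcup_{A\in\mathcal A_n}A$ and its Lebesgue measure is bounded by $\sum_{A\in\mathcal A_n}|A|$.

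Next I would establish a multi-observable analogue of Lemma \ref{horse2}: for $n$ sufficiently large there exist $q\in[(1-\varepsilon)n,(1+21\varepsilon/\lambda)n]$ and a finite collection $\mathcal K$ of pairwise disjoint closed intervals in $\Lambda$ generating a horseshoe for $f^q$ with
$$\sum_{K\in\mathcal K}|K|\geq e^{-3\sqrt\varepsilon n}\sum_{A\in\mathcal A_n}|A|,\qquad\tfrac{1}{q}S_q\varphi_j(x)\geq\alpha_j-\sqrt\varepsilon\ \text{ for all }x\in H_q(\mathcal K),\ j=1,\ldots,d.$$
The construction is a verbatim transcription of that of Lemma \ref{horse2}: for each $A\in\mathcal A_n$ apply Lemma \ref{towerreturn} to get $\tilde A\subset A$ with $|\tilde A|\geq e^{-2\sqrt\varepsilon n}|A|$ and a return time $t_A$; pigeonhole on $t_A$ to choose $t_0$ maximising $\sum_{A\in\mathcal A_n(t_0)}|\tilde A|$; post-compose with the fixed iterate $f^u\colon I^+\to\hat X$ supplied by (A4) to extract $K(A)\subset\tilde A$ with a uniformly bounded length loss; set $q=t_0+u$. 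The vector time-average control comes from applying Sublemma \ref{bdd} once per observable (with its own Lipschitz constant) and absorbing the $u\sup|\varphi_j|$ residual into $\sqrt\varepsilon q/2$.

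With $F=f^q$ in hand I would repeat the measure construction of Sect.\ref{construct2} with $\sigma=1$, forming
$$\nu_\ell=\rho_\ell\sum_{(a_0,\ldots,a_\ell)}|K_{a_0\cdots a_\ell}|\,\nu_{a_0\cdots a_\ell}$$
and taking a weak-$*$ accumulation point $\nu_0$, which is $F$-invariant and supported on $H_q(\mathcal K)$. Combining the combinatorial lower bound \eqref{lem1} with the variational upper bound \eqref{lem2}, both specialised to $\sigma=1$, yields $\log\sum_i|K_i|\leq h_F(\nu_0)-\int\log|DF|\,d\nu_0+\log\kappa=qF(\eta)+\log\kappa$, where $\eta=\tfrac{1}{q}\sum_{i=0}^{q-1}(f^i)_*\nu_0\in\mathcal M_f$ (the identities $h_F(\nu_0)=qh(\eta)$ and $\int\log|DF|d\nu_0=q\lambda(\eta)$ being the standard $q$-iterate scaling). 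Chaining with the horseshoe inequality gives
$$\tfrac{1}{n}\log\sum_{A\in\mathcal A_n}|A|\leq 3\sqrt\varepsilon+\tfrac{q}{n}F(\eta)+\tfrac{\log\kappa}{n}\leq(1-\varepsilon)F(\eta)+4\sqrt\varepsilon,$$
where the second inequality uses Ruelle's inequality $F(\eta)\leq 0$ together with $q/n\geq 1-\varepsilon$, and holds once $n$ is large enough to absorb $\log\kappa/n$. This is \eqref{upper1}. The bound \eqref{upper5} follows by integrating the pointwise horseshoe bound against $\nu_0$: $\eta(\varphi_j)=\tfrac{1}{q}\int S_q\varphi_j\,d\nu_0\geq\alpha_j-\sqrt\varepsilon$.

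The main obstacle is the multi-observable version of Lemma \ref{horse2}. A single $t_0$ must work for all $d$ observables simultaneously --- this is automatic because the vector condition is built into $\mathcal A_n$ --- but one has to keep each $\varphi_j$'s cumulative Lipschitz-plus-tail error below $\sqrt\varepsilon q/2$. This fixes the order of quantifiers: $d$ and the $\varphi_j$ are given first, then $\varepsilon$ is chosen small (with the Lipschitz constants determining how small), then $N$ is taken large as in Sect.\ref{recovery}, and finally $n$. With the adapted horseshoe lemma in place, the remainder is a direct transcription of Sect.\ref{spectrum} specialised to $\sigma=1$.
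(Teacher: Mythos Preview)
Your proposal is correct and follows essentially the same approach as the paper: the paper states the multi-observable, $\sigma=1$ analogue of Lemma \ref{horse2} as a separate Lemma \ref{horse}, proves it by the same pigeonhole-on-return-time construction you describe, and then in Sect.\ref{construct1} builds the measure $\nu_\infty$ and $\eta$ exactly as you outline, chaining the horseshoe inequality with the variational bound to reach \eqref{upper1} and reading off \eqref{upper5} from Lemma \ref{horse}(c).
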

We finish the proof of Theorem B assuming the conclusion of the proposition.
Recall that $M>0$ is such that
$f^{M}\Lambda=Y$. Let $\varepsilon_0>0$ be a small constant. For all large $n$ we have
$$\left\{x\in Y\colon \frac{1}{n}S_n\varphi_j(x)\geq \alpha_j \right\}\subset
f^{M}\left\{x\in\Lambda\colon \frac{1}{n}S_n\varphi_j(x)\geq
\alpha_j-\varepsilon_0\right\},$$ 
where
it is understood that $j$ runs over $\{1,2,\ldots,d\}$.
By the Mean Value Theorem,
\[ \left|\left\{x\in Y\colon \frac{1}{n}S_n\varphi_j(x)\geq \alpha_j\right\}\right| \leq 4^{M}\cdot
\left|\left\{x\in\Lambda\colon \frac{1}{n}S_n\varphi_j(x)\geq
\alpha_j-\varepsilon_0\right\}\right|.\] 
From this inequality and Proposition
\ref{upper} there exists $\eta\in\mathcal M_f$ such that
$\eta(\varphi_j) \geq \alpha_j-\varepsilon_0-\sqrt{\varepsilon}$ $(j=1,\ldots,d)$ and
\[\frac{1}{n}\log \left|\left\{x\in Y\colon
\frac{1}{n}S_n\varphi_j(x)\geq \alpha_j\right\}\right| \leq
\frac{M}{n}\log4+(1-C\varepsilon)F(\eta)+4\sqrt{\varepsilon}.\] Letting $n\to\infty$, and 
then $\varepsilon_0\to0$, $\varepsilon\to0$ we
get
\begin{equation}\label{upper-1}
\varlimsup_{n\to\infty}\frac{1}{n} \log \left|\left\{
\frac{1}{n}S_n\varphi_j\geq \alpha_j\right\}\right|
\leq\lim_{\varepsilon\to 0}\sup\left\{F(\nu)\colon\nu\in\mathcal
M_f,\ \nu(\varphi_j)\geq \alpha_j-\sqrt{\varepsilon}
\right\}.\end{equation} 
The lower large
deviations bound obtained in \cite{Chu11} gives
\begin{equation}\label{lower-1}
\varliminf_{n\to\infty}\frac{1}{n} \log \left|\left\{
\frac{1}{n}S_n\varphi_j> \alpha_j\right\}\right|
\geq\sup\left\{F(\nu)\colon\nu\in\mathcal M,\ \nu(\varphi_j)>
\alpha_j\right\},\end{equation} where $\sup\emptyset=-\infty$.
Theorem B follows from (\ref{upper-1})
(\ref{lower-1}) because the weak topology on
$\mathcal M$ has a countable base generated by open sets
of the form $ \left\{\nu\in\mathcal M \colon\nu(\varphi_j)>\alpha_j,\
j=1,\ldots,d \right\},$ where $d\geq1$, each $\varphi_j\in C(X)$ is
Lipschitz, and $\alpha_j\in\mathbb R$. \medskip

The rest of this section is devoted to the proof of Proposition
\ref{upper}. 
From the towers constructed in Sect.2 we extract horseshoes, and construct 
invariant measures supported on them 
with the properties as in the statement of the proposition.

\subsection{Construction of a horseshoe}\label{horseshoe}
Define 
$$\mathcal B_n=\left\{A\in\bigvee_{i=0}^{n-1} \hat f^{-i}\mathcal
D\colon A\subset\Delta_0,\ \ \frac{1}{n}S_n\varphi_j(x)\geq \alpha_j\ \
j=1,\ldots,d\ \text{ for some $x\in A$} \right\}.$$
Observe that
\begin{equation}\label{measure}
\left|\left\{x\in\Lambda\colon \frac{1}{n}S_n\varphi_j(x)
\geq \alpha_j\ \ j=1,\ldots,d\right\}\right|\leq |\mathcal
B_n|,\end{equation}
where $|\mathcal B_n|=\sum_{A\in\mathcal B_n}
|A|.$ To estimate this from above we use the next lemma,
the proof of which closely follows that of Lemma \ref{horse2}
with $\sigma$ replaced by $1$.

\begin{lemma}\label{horse}
For any $\varepsilon>0$ there exists $n''>0$ such that if $n\geq n''$ and $\mathcal B_n\neq\emptyset$ then there exist a finite
 collection $\mathcal L$ of pairwise disjoint closed intervals in $\Lambda$
 and an integer $r\in[(1-\varepsilon)n,(1+21\varepsilon/\lambda) n]$
 such that:

\begin{itemize}
\item[(a)] $\mathcal L$ generates a horseshoe for $f^{r}$;

\item[(b)] $\sum_{L\in\mathcal L} |L|\geq
e^{-3\sqrt{\varepsilon} n}|\mathcal B_n|$;

\item[(c)] for all $x\in H_{r}(\mathcal L)$, $(1/r)S_{r}\varphi_j(x)\geq
\alpha_j-\sqrt{\varepsilon}$, $j=1,\ldots,d$.
\end{itemize}
\end{lemma}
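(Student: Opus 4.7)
The plan is to adapt the proof of Lemma \ref{horse2} almost verbatim, replacing the weighted sums $\sum |A|^\sigma$ by plain sums $\sum |A|$ (i.e., $\sigma=1$). The set $\mathcal B_n$ plays the role of $\mathcal A_n$, and the single condition $|n^{-1}S_n\varphi - \alpha| < \varepsilon$ is replaced by the vector of inequalities $n^{-1}S_n\varphi_j \ge \alpha_j$.

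First I would invoke Lemma \ref{towerreturn} on every $A\in\mathcal B_n$ to obtain an interval $\tilde A\subset A$ with $|\tilde A|\ge e^{-2\sqrt{\varepsilon}n}|A|$ and an integer $t_A\in[(1-\varepsilon)n,(1+20\varepsilon/\lambda)n]$ with $\hat f^{t_A}\tilde A = \Lambda^{\pm}$. Grouping $\mathcal B_n(t)=\{A\in\mathcal B_n : t_A=t\}$ and picking $t_0$ that maximises $\sum_{A\in\mathcal B_n(t)}|\tilde A|$, pigeonhole yields
\[
\sum_{A\in\mathcal B_n(t_0)}|\tilde A|\;\ge\;\frac{1}{(1+20/\lambda)\varepsilon n}\sum_{A\in\mathcal B_n}|\tilde A|.
\]
Next, using (A4) exactly as in the proof of Lemma \ref{horse2}, I fix $\tau>0$, $u>0$ and a closed interval $I^+\subset\Lambda^+$ (and $I^-=-I^+$) such that $\Lambda^+$ contains a $\tau$-scaled neighbourhood of $I^+$ and $f^u\colon I^+\to\hat X$ is a diffeomorphism. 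Set $r=t_0+u$, which lies in $[(1-\varepsilon)n,(1+21\varepsilon/\lambda)n]$ for large $n$. For each $A\in\mathcal B_n(t_0)$ define $L(A)\subset\tilde A$ as the preimage of $I^{\pm}$ under $f^{t_0}|\tilde A$, and set $\mathcal L=\{L(A):A\in\mathcal B_n(t_0)\}$; by construction $f^r$ sends every element of $\mathcal L$ diffeomorphically onto $\hat X$, giving (a).

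For (b), the Koebe Principle (Lemma \ref{koebe}) together with the distortion bound \eqref{bounddist} supplies a constant $c=(\tau/(1+\tau))^2|I^+|/|\Lambda^+|>0$ with $|L(A)|\ge c|\tilde A|$; summing and chaining the two estimates above gives
\[
\sum_{L\in\mathcal L}|L|\;\ge\;\frac{c\,e^{-2\sqrt{\varepsilon}n}}{(1+20/\lambda)\varepsilon n}|\mathcal B_n|\;\ge\;e^{-3\sqrt{\varepsilon}n}|\mathcal B_n|
\]
for large $n$. For (c), fix for each $A\in\mathcal B_n(t_0)$ a witness $x_A\in A$ with $n^{-1}S_n\varphi_j(x_A)\ge\alpha_j$; since $r-n=O(\varepsilon n)$,
\[
S_r\varphi_j(x_A)\ge S_n\varphi_j(x_A)-\sup|\varphi_j|\cdot|r-n|\ge\bigl(\alpha_j-\tfrac{1}{2}\sqrt{\varepsilon}\bigr)r.
\]
Applying Sublemma \ref{bdd} to each $\varphi_j$ (Lipschitz) and controlling the last $u=r-t_0$ iterates trivially by $\sup|\varphi_j|$, for any $x$ in the same element $A$ one has $|S_r\varphi_j(x)-S_r\varphi_j(x_A)|\le\tfrac{1}{2}\sqrt{\varepsilon}r$. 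Since any $x\in H_r(\mathcal L)$ lies in some $L(A)\subset A$, combining the two bounds yields $r^{-1}S_r\varphi_j(x)\ge\alpha_j-\sqrt{\varepsilon}$ for all $j$.

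The step requiring the most care is (c): one must verify that the single Sublemma \ref{bdd} bound, stated for one function, survives the passage to a finite collection $\varphi_1,\ldots,\varphi_d$ with the constant $C\delta^{-1}$ being dwarfed by $\sqrt{\varepsilon}r$ for large $n$, and that the contribution of the additional $u$ ``connecting'' iterates (bounded by $2\sup|\varphi_j|\,u$) remains negligible. Otherwise the proof is a routine transcription of that of Lemma \ref{horse2} with $\sigma=1$, and indeed the excerpt already announces this in its parenthetical remark.
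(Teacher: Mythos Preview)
Your proposal is correct and follows essentially the same approach as the paper's proof: apply Lemma \ref{towerreturn} to each $B\in\mathcal B_n$, pigeonhole over the return time $t$, set $r=t_0+u$ and $\mathcal L=\{L(A)\}$ via the pull-back of $I^\pm$, and verify (c) by combining the $|r-n|=O(\varepsilon n)$ bound with Sublemma \ref{bdd} applied to each Lipschitz $\varphi_j$. The paper's own proof is exactly this transcription of the proof of Lemma \ref{horse2} with $\sigma=1$, as its introductory remark already announces.
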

\begin{proof}
For each $B\in\mathcal B_n$ fix once and for all an interval $\tilde B$ and an integer $t=t_B$
for which the conclusions of
Lemma \ref{towerreturn} holds.
Let $\mathcal B_n(t)=\{B\in\mathcal B_n\colon t_B=t\}$.
Then 
$t_B\in[(1-\varepsilon)n, (1+19\varepsilon/\lambda)n]$.
Let $t_1$ be a value of $t$ which maximizes
$\sum_{B\in\mathcal B_n(t)}|\tilde B|$.
Then 
\begin{equation}\label{horse1}
\sum_{B\in\mathcal B_n(t_1)}|\tilde B|\geq\frac{1}{(1+20/\lambda)\varepsilon
n}\sum_{B\in\mathcal B_n}|\tilde B|.\end{equation} 
For each 
$B\in\mathcal B_n(t_1)$
define $L(B)$ to be the preimage of $I^+$ or $I^-$ under
$f^{t_1}|\tilde{B}$,
according to whether
$f^{t_1}\tilde{B}=
\Lambda^+$ or $=
\Lambda^-.$
Set $\mathcal L=\{L(B)\colon B\in\mathcal B_n(t_1)\}$ 
and $r=t_1+u$.
%Then $\mathcal L$ is a finite collection 
%of pairwise disjoint
%closed intervals in $\Lambda$, and
%$f^{r}$ sends each element of $\mathcal L$ diffeomorphically onto
%$\hat X$. 
The bounds on $r$ hold for sufficiently large $n$.
(\ref{horse1}) and Lemma \ref{towerreturn} implies
\begin{align*}\label{upper3}\sum_{L\in\mathcal L}|L|
%&\geq
%c\sum_{B\in\mathcal B_n(t_1)}|\tilde B|
%\geq
%\frac{c}{2C\varepsilon
%n}\sum_{B\in\mathcal
%B_n}|\tilde B|\geq\frac{ce^{-3\sqrt{\varepsilon}
%n}}{2C\varepsilon n }|\mathcal B_n|
\geq e^{-3\sqrt{\varepsilon}
n}|\mathcal B_n|
.\end{align*}

To prove (c), for each $B\in\mathcal B_n(t_1)$ pick $x_B\in B$
such that
$S_n\varphi(x_B)\geq \alpha_jn$ for $j=1,\ldots,d$. We have 
%$
%|S_r\varphi_j(x_B)-S_n\varphi_j(x_B)|\leq \sup|\varphi_j|\cdot|r-n|$, and so
\begin{equation}\label{con1}
S_r\varphi_j(x_B)\geq S_n\varphi_j(x_B)-
\sup|\varphi_j|\cdot|r-n|\geq \alpha_jn-(21\varepsilon/\lambda) n\geq
\left(\alpha_j-\sqrt{\varepsilon}/2\right)r.\end{equation} By
Sublemma \ref{bdd}, for any $x\in B$ we have
$|S_{t_1}\varphi_j(x_B)-S_{t_1}\varphi_j(x)|\leq {\rm Lip}(\varphi_j)\cdot
C\delta^{-1}$, and thus
\begin{equation}\label{con2}
|S_r\varphi_j(x_B)-S_r\varphi_j(x)|\leq {\rm
Lip}(\varphi_j)\cdot
C\delta^{-1}+2\sup|\varphi_j|\cdot(r-t_1)\leq \sqrt{\varepsilon}r/2.\end{equation} (\ref{con1}) (\ref{con2}) yield
$S_{r}\varphi_j(x)\geq (\alpha_j-\sqrt{\varepsilon})r$.
\end{proof}

\subsection{Construction of a measure on the horseshoe}\label{construct1}
We construct a measure $\eta$ for which (\ref{upper1})
(\ref{upper5}) hold. 
For sufficiently large $n$ with $\mathcal B_n\neq\emptyset$, 
choose a finite collection $\mathcal L$ of pairwise
disjoint closed intervals in $\Lambda$ and a positive integer $r$ for which 
the conclusions of Lemma \ref{horse} hold. 
 %Let $\nu_{\Psi}$ denote the equilibrium state of $G$
%for the potential $-\Psi$, namely a $G$-invariant measure such that
%$$h_G(\nu_{\Psi})-\nu_{\Psi}(\Psi)=\sup\left\{h_G(\nu)-\nu(\Psi)
%\colon\text{$\nu$ is $G$-invariant}\right\}.$$ 
%Write $\mathcal L=\{L_1,\ldots,L_{\sharp \mathcal L}\}$ and
Set $G=f^r$.
The argument in Sect.\ref{construct2} shows that there exists 
a $G$-invariant probability measure $\nu_\infty$ supported on
$H_r(\mathcal L)$
satisfying
\begin{equation*}
%\label{lem1'}
h_G(\nu_\infty)-\int\log|DG| d\nu_\infty
\geq \log\sum_{L\in \mathcal L}|L|-\log \kappa ,
\end{equation*}
where $h_G(\nu_\infty)$
denotes the entropy of $(G,\nu_\infty)$.
Define $\eta\in\mathcal M_f$ 
by $\eta=(1/r)\sum_{i=0}^{r-1}(f^i)_*\nu_{\infty}$.
From Lemma \ref{horse}(c) it follows
that $\eta(\varphi_j)\geq
\alpha_j-\sqrt{\varepsilon}$, and (\ref{upper5}) holds.
Since $F(\eta)\leq0$ and
$r\geq (1-\varepsilon)n$,
using Lemma \ref{horse}(b)
we have
\begin{align*}
n\cdot F(\eta)
&\geq \frac{r}{1-\varepsilon} F(\eta) =\frac{1}{1-\varepsilon}\left(
h_G(\nu_\infty)-\int\log|DG| d\nu_\infty\right) \\
&\geq \frac{1}{1-\varepsilon}\left(
\log\sum_{L\in \mathcal L}|L|-\log \kappa \right) \geq \frac{1}{1-\varepsilon}\left( 
\log |\mathcal B_n| -4\sqrt{\varepsilon} n \right) .
\end{align*}
Rearranging this and using \eqref{measure} yields 
\begin{align*} 
\frac{1}{n}\log \left|\left\{x\in\Lambda\colon \frac{1}{n}S_n\varphi_j(x) 
\geq \alpha_j\ \ j=1,\ldots,d\right\}\right| \leq \frac{1}{n}\log |\mathcal B_n| \leq(1-\varepsilon)F(\eta)+4\sqrt{\varepsilon}.
\end{align*}
Hence (\ref{upper1}) holds.

\subsection*{Acknowledgments}
We thank anonymous referees for very useful comments.
The first-named author is partially supported by the Grant-in-Aid 
for Scientific Research (C) of the JSPS, Grant No. 24540212.
The second-named author is partially supported by the Grant-in-Aid
for Young Scientists (B) of the JSPS, Grant No. 23740121.

\bibliographystyle{amsplain}

\end{document}